\newcounter{item}[section]
\newcounter{kirshr}
\newcounter{kirsha}
\newcounter{kirshb}
\newenvironment{enumroman}{\setcounter{kirshr}{1}
\begin{list}{(\roman{kirshr})}{\usecounter{kirshr}} }{\end{list}}
\newenvironment{enumarab}{\setcounter{kirshb}{1}
\begin{list}{(\arabic{kirshb})}{\usecounter{kirshb}} }{\end{list}}
\newenvironment{athm}[1]{\vskip3mm\par\noindent
{\bf #1 }. \slshape }
{\upshape\par\vskip10pt minus3pt}
\newtheorem{theorem}{Theorem}[section]
\newtheorem{lemma}[theorem]{Lemma}
\newtheorem{corollary}[theorem]{Corollary}
\newenvironment{demo}[1]{\noindent{\bf #1.}\upshape\mdseries}
{\nopagebreak{\hfill\rule{2mm}{2mm}\nopagebreak}\par\normalfont}
\theoremstyle{definition}
\newtheorem{example}[theorem]{Example}
\newtheorem{definition}[theorem]{Definition}
\def\R{\mathbb{R}}
\def\C{{\mathfrak{C}}}
\def\Nr{{\mathfrak{Nr}}}
\def\RCA{{\sf RCA}}
\def\A{{\mathfrak{A}}}
\def\B{{\mathfrak{B}}}
\def\C{{\mathfrak{C}}}
\def\D{{\mathfrak{D}}}
\def\M{{\mathfrak{M}}}
\def\N{{\mathfrak{N}}}
\def\Bl{{\mathfrak{Bl}}}
\def\CA{{\sf CA}}
\def\SC{{\bf SC}}
\def\QEA{{\bf QEA}}
\def\Uf{{\sf Uf}}
\def\K{{\bf K}}
\def\K{{\bf K}}
\def\RCA{{\sf RCA}}
\def\Rd{{\ Rd}}
\def\(R)RA{{\bf (R)RA}}
\def\RA{{\bf RA}}
\def\RRA{{\bf RRA}}
\def\R{\mathbb{R}}
\def\F{{\sf F}}
\def\Rl{{\sf Rl}}
\def\c #1{{\cal #1}}
 \def\CA{{\sf CA}}
\def\B{{\sf B}}
\def\G{{\sf G}}
\def\w{{\sf w}}
\def\y{{\sf y}}
\def\g{{\sf g}}
\def\b{{\sf b}}
\def\r{{\sf r}}
\def\K{{\sf K}}
 \def\Cm{{\mathfrak{Cm}}}
\def\Nr{{\mathfrak{Nr}}}
\def\restr #1{{\restriction_{#1}}}
\def\cyl#1{{\sf c}_{#1}}
\def\Ra{{\mathfrak{Ra}}}
\def\set#1{\{#1\} }
\def\Ra{{\mathfrak{Ra}}}
\def\Nr{{\mathfrak{Nr}}}
\def\Tm{{\mathfrak{Tm}}}
\def\A{{\mathfrak{A}}}
\def\B{{\mathfrak{B}}}
\def\C{{\mathfrak{C}}}
\def\D{{\mathfrak{D}}}
\def\A{{\mathfrak{A}}}
\def\B{{\mathfrak{B}}}
\def\C{{\mathfrak{C}}}
\def\D{{\mathfrak{D}}}
\def\P{{\mathfrak{P}}}
\def\Bb{{\mathfrak{Bb}}}
\def\L{{\mathfrak{L}}}
\def\PEA{{\mathbf{PEA}}}
\def\PA{{\mathbf{PA}}}
\def\Bb{{\mathfrak{Bb}}}
\def\L{{\mathfrak{L}}}
\def\CA{{\sf CA}}
\def\RA{{\sf RA}}
\def\RRA{{\sf RRA}}
\def\RCA{{\sf RCA}}
\def\G{{\bf G}}
\def\CRA{{\bf CRA}}
\def\At{{\sf At}}
\def\Ra{{\sf Ra}}
\def\R{{\sf R}}
\def\Rd{{\sf Rd}}
\def\nodes{{\sf nodes}}
\def\rng{{\sf rng}}
\def\dom{{\sf dom}}
\def\Z{{\sf Z}}
\def\pa{$\forall$}
\def\ws{winning strategy}
\def\y{{\sf y}}
\def\g{{\sf g}}
\def\bb{{\sf b}}
\def\r{{\sf r}}
\def\w{{\sf w}}
\def\PA{{\sf PA}}
\def\PEA{{\sf PEA}}
\def\QEA{{\sf QEA}}
\def\SC{{\sf SC}}
\def\CRA{{\sf CRA}}
\def\Cof{{\sf Cof}}
\def\pe{$\exists$}
\title{Various interplays between relation and cylindric algebras}
\author{Tarek Sayed Ahmed }
\begin{document}
\maketitle

\begin{abstract} Using model theoretic techniques that proved that the class of $n$ neat reducts of $m$ dimensional cylindric algebras, $\Nr_n\CA_m$,
is not elementary, we prove the same result for $\Ra\CA_k$, $k\geq 5$, and we show that $\Ra\CA_k\subset S_c\Ra\CA_k$ for all $k\geq 5$. 
Conversely, using the 
rainbow construction for cylindric algebra, 
we show that several classes of algebras, related to the class $\Nr_n\CA_m$, 
$n$ finite and $m$ arbitrary, are  not elementary. Our results apply to many cylindric-like algebras, 
including Pinter's substitution algebras and Halmos' polyadic 
algebras with and without 
equality. The techniques used are essentially those used by Hirsch and Hodkinson, and later by Hirsch in \cite{hh} and \cite{r}.
In fact, the main result in \cite{hh} follows from our more general construction.
Finally we blow up a little the {\it blow up and blur construction} of Andr\'eka nd N\'emeti, 
showing that various constructions of weakly representable atom structures
that are not strongly representable, can be formalized in our blown up, blow up and blur construction, 
both for relation and cylindric algebras. Two open problems are discussed, in some detail, proposing ideas.
One is whether class of subneat reducts are closed under completions, the other is whether there exists a 
weakly representable $\omega$ dimensional atom structure,
that is not strongly representable. For the latter we propose a lifting argument, due to Monk, 
applied to what we call anti-Monk algebras (the algebras, constructed
by Hirsch and Hodkinson, are atomic, and their atom structure is stongly representable.)
\end{abstract}

\section*{Introduction}

Relation algebras and cylindric algebras introduced by Tarski are cousins. The concrete
version of relation algebras are algebras of binary relations, with the binary operation of composition and unary one of 
taking converses,
while the concrete version of cylindric algebras of dimension $n$ are algebras of $n$ ary relations
with $n$ unary operations of cylindrfiers or projections and constants, the diagonal elements
reflecting equality.

There is an endless interplay between relation algebras and cylindric algebras. One can construct a relation algebra from a $\CA_n$,
and conversely from an $\RA$, that has what Maddux calls an $n$ dimensional cylindric basis, one can construct a $\CA_n$, 
and even without having a basis, as shown by Hodkinson,  though in the latter construction one loose that original relation algebra is embeddable into the 
$\Ra$ reduct of the new $\CA_n$.

They have a lot of common features and manifestations. For example the representable algebras (in both cases, for $\CA_n$ $n\geq 3$)
are unruly and wild, being resilient to any simple axiomatizations; it is undecidable to tell whether a finite algebra is representable or not.
It is an entertaining practice among algebraic logicians to transfer results from $\RA$ to $\CA$ and vice versa,  
but  for a statement that applies to both, it is usually easier to prove it for $\RA$s (like the non finite-axiomatizability of the 
equational axiomatizations of the representables and the undecidablity problem for finite algebras, as to wether they are representable
or not). Indeed, this has been mostly the case historically in a temporal sense, 
but of course there are exceptions, for example  the class of neat reduct 
was proved non-elementary before
the class of $\Ra$ reducts \cite{r}. However, even in this case, the $\CA$ analogue of some results proved in the latter reference 
will only be proved here.

There are also types  of construcions that apply to both.
In this paper we will be concerned with the rainbow construction, 
to prove new results concerning various subclasses that are related to neat rducts one way 
or another, and the so called Blow up and Blur construction, a construction invented by Andr\'eka and N\'emeti to 
show that there are weakly representable atom structures that are not strongly
representable; not only that, but the term algebra can be a $k$- neat reduct for any pre assigned finite $k$.
(It cannot be in $\Nr_n\CA_{\omega}$ for in this case, having only countable many atoms it will be completely representable, forcing
the complex algebra to be representable as well).


We start by give a unified model theoretic proof to several deep results that are scattered
in the literature in a serious of publications dating back to the seventees of the last century.
We follow the notation of \cite{HMT1} and \cite{1}. In particular, for ordinals $\alpha<\beta$, $\Nr_{\alpha}\CA_{\beta}$ 
denotes the class of  $\alpha$ neat reducts 
of $\CA_{\beta}$s.
Henkin et all \cite{HMT1} showed that $\Nr_1\CA_{\beta}$ is a variety for any $\beta\geq 1$.
N\'emeti \cite{Nemeti} showed that for any pair of ordinals $1<\alpha,\beta$, the class $\Nr_{\alpha}\CA_{\beta}$, though closed under 
homomorphic images and products, 
is not closed under forming subalgebras answering problem 2.11 in\cite{HMT1}, hence is not a variety.
N\'emeti then posed the question as to whether
it is closed under {\it elementary} subalgebras. Being closed under ultraproducts, as proved by Henkin et all \cite{HMT1},
this amounts to asking whether it is elementary or not. 
Andr\'eka and N\'emeti proved that is for the  lowest dimension $2$, namely $\Nr_2\CA_{\beta}$ is not elementary for any $\beta>2$.

The result was extended to all dimension by the author, and a model theoretic proof was given in \cite{MLQ}; the same 
proof reported in some detail in the survey article \cite{Sayedneat}. 
This is the method we use here.

Later the problem was investigated for cylindric like algebras, like substitution algebras $\SC$ of Pinter, and polyadic 
algebras of Halmos $\PA$.
The problem was solved for any class $\K$ between $\SC_2$ and $\PEA_2$ in \cite{FM}; 
the proof hence generalizes that
of Andr\'eka and N\'emeti. This result  was extended to finite dimensions in \cite{Studia}, and infinite dimensions in [\cite{quasipolyadic}.

In the context of cylindric algebras,  closure under complete neat embeddings and complete representability
was proved equivalent for countable atomic algebras by the author \cite{Sayed}
The charcterization also works for relation algebras, using the same method, which is a Baire category argument at heart
\cite{1}; later reproved by Robin Hirsch using games \cite{r}.  It was also proved that all three conditions cannot be omitted, 
atomicity, countability and complete embeddings.
There are examples, that show that such conditions are necessary.

Hirsch and Hodkinson \cite{HHbook} prove
that the class of completely representable $\CA_n$s is not elementary, for any $n\geq 3.$

For our results concerning neat reducts, we 
use techniques of Hirsch's in \cite{r} that deal with relation algebras,  and those of Hirsch and Hodkinson in \cite{hh} 
on complete representations. The results in the latter 
had to do with investigating the existence of complete representations for cylindric algebras  and for
this purpose, an infinite (atomic) game that tests complete representability was devised, and such a game was used
on a rainbow relation algebra.
The rainbow construction has a very wide scope of applications, 
and it  proved to be a nut cracker in solving many hard problems for relation algebras,
particularly for constructing counterexamples distinguishing between classes that are very subtly related, or rather unrelated. 

Unfortunately, relation rainbow algebras do not posses cylindric basis for $n\geq 4$ 
(so it seems that we cannot have our cake and eat it), so to
prove the analogous  result for cylindric algebra the construction had to be considerably modified to adapt the new situution, starting anew,
though the essence of
the two constructions is basically the same.
Instead of using atomic networks, in the cylindric algebra case games are played on {\it coloured graphs}. 
On the one hand, such graphs have edges which code the
relation algebra construction, but they also have hyperdges of length $n-1$, reflecing the cylindric algebra structure. 

It seems that there is no general theorem for rainbow constructions when it comes to cylindric like algebras, 
namely one relating winning strategies for pebble games on two structures or graphs $A,B$, to winning strategies for \pe\ in the cylindric rainbow 
algebra based $A$ and $B$, \cite{HHbook2}. 

Nevertheless, 
in the latebook on cylindric algebra \cite{1}, a general rainbow construction
is given in \cite{HHbook} in the context of building algebras from graphs giving rise to a class of models, 
from which the rainbow atom structure is defined, but just referring to one graph as a parameter, rather than two structures as done in their 
earlier book \cite{HHbook2}.
The second graph is fixed to be the greens; these are the set of colours that \pe\ never uses.
(In our class the class of models will be coloured graphs, viewed as structures for a natural signature).

For cylindric algebras, we take the $n$ neat reducts of algebras in higher dimension, ending up with a $\CA_n$, 
but we can also take {\it relation algebra reducts}, getting instead a relation algebra. 
The class of relation algebra reducts of cylindric algebras of dimension $n\geq 3$, denoted 
by $\Ra\CA_n$. The $\Ra$ reduct of a $\CA_n$, $\A$, is obtained by taking the $2$ neat reduct of $\A$, then defining composition and converse 
using one space dimension.
For $n\geq 4$, $\Ra\CA_n\subseteq \RA$. Robin Hirsch dealt primarily with this class in \cite{r}.   
This class has also been investigated by many authors, 
like Monk, Maddux, N\'emeti and Simon (A chapter in Simon's dissertation is devoted to such a class, 
when $n=3$). 
After a list of results and publications, Simon proved $\Ra\CA_3$ is not closed under subalgebras for $n=3$, 
with a persucor by Maddux proving the cases $n\geq 5$,
and Monk proving the case $n=4$.

In \cite{r}, Hirsch deals only the relation algebras proving that the $\Ra$ reducts of $\CA_k$s, $k\geq 5$, 
is not elementary, and he ignored the $\CA$ case, probably
because  of analogous results proved by the author on neat reducts \cite{IGPL}.

But  the results in these two last  papers are not identical (via a replacement of relation algebra via a cylindric algebra and vice versa).
There are differences and similarities that are illuminating 
for both. For example in the $\RA$  case Hirsch proved that the elementary subalgebra that is not an $\Ra$ reduct 
is not a complete subalgebra of the one that is.
In the cylindric algebra case, the elementary subalgebra that is not a neat reduct 
constructed is a complete subalgebra of the neat reduct.

Hirsch \cite{r} also proved that any $\K$, such that $\Ra\CA_{\omega}\subseteq \K\subseteq S_c\Ra\CA_k$, $k\geq 5$ 
is not elementary;  here, using a rainbow construction for cylindric algebras, we prove its $\CA$ analogue.
In the same paper \cite{r}. In op.cit Robin asks whether the inclusion $\Ra\CA_n\subseteq S_c\Ra\CA_n$ is proper, the construction 
in \cite{IGPL}, shows that for $n$ neat reducts, 
it is. 

Besides giving a unified  proof of all cylindric like algebras for finite dimensions, 
we show that the inclusion is proper given that a certain $\CA_n$ term exists. 
(This is a usual first order formula using $n$ variables).
And indeed  using the technique in \cite{IGPL} we prove an analogous result for relation algebras,
answering the above  question of Hirsch's in \cite{r}. We show that there is an $\A\in \Ra\CA_{\omega}$ with a  an elmentary subalgebra
$\B\in S_c\Ra\CA_{\omega}$, that is not in $\Ra\CA_k$ when $\leq 5$. In particular, $\Ra\CA_k\subseteq S_c\Ra\CA_5$, for
$k\geq 5$.

Now it is worthwhile to reverse the deed, and generalize Hirsch's construction using rainbow cylindric algebras, to more results than that
obtained for cylindric algebras on neat reducts in \cite{IGPL}. 
For example, our construction here will give the following result not proved in {\it op.cit}:
There is an algebra $\A\in \Nr_n\CA_{\omega}$ with an elementary subalgebra, that is not completely
representable. But since the algebra $\A$ has countably many atoms, then it is completely representable.
This gives the result in \cite{hh}.

The transfer from results on relation algebras to cylindric algebras is not mechanical at all. More often than not, this is not an easy task, 
indeed it is far from being  trivial. 

We use essentially the techniques in \cite{r}, together with those in \cite{hh}, extending the rainbow construction
to cylindric algebra. But we mention a very important difference. 

In \cite{hh} one game is used to test complete representability.
In  \cite{r} {\it three} games were divised testing different neat embedability properties.
(An equivalence between complete representability and special neat embeddings is proved in \cite{IGPL})

Here we use only two games adapted to the $\CA$ case. This suffices for our purposes. 
The main result in \cite{hh}, namely, that the class of completely representable algebras of dimension
$n\geq 3$, is non elementary, follows from the fact that \pe\  cannot win the infinite length 
game, but he can win the finite ones. 

Indeed a very useful way of characterizing non elementary classes, 
say $\K$, is a Koning lemma argument. The idea is  to devise a game $G$ on atom structures such that for a given algebra atomic $\A$  \pe\ 
has a winning strategy on its atom structure for all games of finite length, 
but \pa\ wins the $\omega$ round game. It will follow that there a countable cylindric algebra $\A'$ such that $\A'\equiv\c
A$ and \pe\ has a \ws\ in $G(\A')$.
So $\A'\in K$.  But $\A\not\in K$
and $\A\preceq\A'$. Thus $K$ is not elementary.

To obtain our result we use {\it two} distinct games, both having $\omega$ rounds.  
Of course the games are very much related.

In this new context \pe\ can also win a finite game with $k$ rounds for every $k$. Here the game
used  is more complicated than that used in Hirsch and Hodkinson, 
because in the former case we have three kinds of moves which makes it harder for \pe\ 
to win. 

Another difference is that the second game, call it $H$,  is actually  played on pairs, 
the first component is an atomic network (or coloured graph)  defined in the new context of cylindric 
algebras, the second is a set of hyperlabels, the finite sequences of nodes are lablled, 
some special ones are called short, and {\it neat} hypernetworks or hypergraphs are those that label short hyperedges with the same label. 
And indeed a \ws\ for \pe\ 
in the infinite games played on an atom structure forces that this is the atom structure of a neat reduct; 
in fact an algebra in $\Nr_n\CA_{\omega}$. However, unlike complete representability,
does not exclude the fact, in principal, there are other representable algebras having the same atom stucture can be only subneat reducts.

On the other hand, \pa\  can win 
{\it another  pebble game}, also in $\omega$ rounds (like in \cite{hh} on a red clique), but
 there is a  finiteness condition involved in the latter, namely  is the number of nodes 'pebbles 'used, which is $k\geq n+2$, 
and  \pa\ s \ws\ excludes the neat embeddablity of the algebra in $k$ extra dimensions. This game will be denoted by $F^k$. 

And in fact the Hirsch Hodkinson's main result in \cite{r}, 
can be seen as a special case, of our construction. The game $F^k$, without the restriction on number
of pebbles used and possibly reused, namely $k$ (they have to be reused when $k$ is finite), but relaxing the condition of finitness,
\pa\ does not have to resuse node, and then this game  is identical to the game $H$ when we delete the  hyperlabels from the latter, 
and forget about the second and third 
kinds of move. So to test only complete representability, we use only these latter games, which become one, namely the one used 
by Hirsch and Hodkinson in \cite{hh}. 
In particular, our algebra $\A$ constructed below is not completely representable, but is elementary equivalent to one that is.
This also implies that the clas of completely representable atom structures are not elementary, the atom structure of the 
former two structures are elementary equivalent, one is completely representable, the other is not.
Since an atom structure of an algebra is first order interpretable in the algebra, hence, 
the latter also gives an example of an atom structure that is weakly representable but not strongly representable, showing that
the class $\CRA_n$ is not elementary.

Concerning the blow up and blur construction, we give a simpler proof of a result of Hodkinson as an instance 
of such (blur and blow) constructions
argueing that the idea at heart is similar to that adopted by Andr\'eka et all \cite{sayed}.
The idea is to blow up a finite structure, replacing each 'colour or atom' by infinitely many, using blurs
to  represent the resulting term algebra, but the blurs are not enough to blur the structure of the finite structure in the complex algebra. 
Then, the latter cannot be representable due to a {\it finite- infinite} contradiction. 
This structure can be a finite clique in a graph or a finite relation algebra or a finite 
cylindric algebra. This theme gives example of weakly representable atom structures that are not strongly
representable. This is the essence too of construction of Monk like-algebras, one constructs graphs with finite colouring (finitely many blurs),
converging to one with infinitely many, so that the original algebra is also blurred at the complex algebra level, 
and the term algebra is completey representable, yielding a representation of its completion the 
complex algebra. 

A reverse of this process exists in the literature; 
that can be called anti-Monk algebras, it builds algebras with infinite blurs converging to one with finite blurs. This idea due to 
Hirsch and Hodkinson, uses probabilistic methods of Erdos to construct a  sequence of graphs with infinite  chromatic 
number one that is $2$ colourable. This construction, which works for both relation and cylindric algebras,
further shows that the class of strongly representable atom structures
is not elementary. Using such algebras we will give an idea of how to construct weakly representable infinite dimensional atom structure
that is not strongly representable.

{\bf Layout}  In section 1, we prove the $\Ra$ analogue of the results  on neat reducts using model theoretic techniques in \cite{IGPL}. In section 2,
we prove the $\CA$ analogue of the results in \cite{r} to cylindric algebras, using the rainbow construction. In the last section, we give a general form of 
the so called Blow up and Blur construction, a term and construction invented by Andr\'eka and N\'emeti, and we present many constructions in the literature proving the exsistence of weakly representable
algebras that are not strongly representable, as an instance of our general framework.

\section{The class $\Ra\CA_n$}

Here we manifest yet another interplay between relation algebras and cylindric algebras.
Using a construction for cylindric algebra given in \cite{MLQ}, and given in some detail in \cite{Sayedneat}, we prove a result on
relation algebras, and reprove several results for cousins of cylindric algebras.
Our model-theoretic proof, unifies results and proofs in \cite{FM}, \cite{Studialogica}, \cite{IGPL}, using the methods in \cite{MLQ}.
The methods used in the former three references are more basic. The advantage in the method used in \cite{MLQ}, 
is that it uses rather sophisticated methods in Model theory, namely, 
Fraisse's methods of constructing homogeneous models that admit elimination of quantifiers, by amalgamating its smaller
parts.

We will not give the complete proof in detail; the proof is complete modulo the existence of
two terms, a cylindric algebra term, and a relation algebra term,
whose definition we omit, but their properties will be clearly stated; and we
hope that the general idea will be clear modulo this omission.

\subsection{Neat and $\Ra$ reducts of cylindric algebras}

We shall prove (the second item (modulo the existence of a $k$ witness)  answers a question of Hirsch \cite{r}.)

\begin{theorem} Let $\K$ be any of cylindric algebra, polyadic algebra, with and without equality, or Pinter's substitution algebra.
We give a unified model theoretic construction, to show the following:
\begin{enumarab}
\item For $n\geq 3$ and $m\geq 3$, $\Nr_n\K_m$ is not elementary, and $S_c\Nr_n\K_{\omega}\nsubseteq \Nr_n\K_m.$
\item Assume that there exists a $k$-witness. For any $k\geq 5$, $\Ra\CA_k$ is not elementary
and $S_c\Ra\CA_{\omega}\nsubseteq \Ra\CA_k$.
\end{enumarab}
\end{theorem}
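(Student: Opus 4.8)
The plan is to prove both items by a single model-theoretic construction using Fra\"iss\'e-style amalgamation to build a homogeneous model $\M$ admitting elimination of quantifiers, then extracting the relevant neat-reduct (respectively $\Ra$-reduct) algebras from $\M$ together with a carefully chosen elementary substructure $\M'\equiv\M$. The architecture, following \cite{MLQ} and \cite{Sayedneat}, is a K\"onig-lemma argument: I would devise a game $G$ played on an atom structure such that \pe\ has a \ws\ in every finite-round approximation $G^k$ but \pa\ wins the $\omega$-round game $G$. From the finite-round wins together with an ultraproduct/saturation argument one obtains a countable algebra $\A'\equiv\A$ lying in the target class (a neat reduct in item (1), or an $\Ra$-reduct in item (2)), while \pa's win in $G$ shows $\A\notin\K$; since $\A\preceq\A'$, the class is not elementary.

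For item (1) the construction produces, in each finite dimension $n\geq 3$ and each $m\geq 3$, an algebra $\A\in S_c\Nr_n\K_\omega$ (so in particular witnessing the strict inclusion $S_c\Nr_n\K_\omega\nsubseteq\Nr_n\K_m$) that is \emph{not} itself in $\Nr_n\K_m$, yet is elementarily equivalent to a genuine neat reduct. The uniformity across $\CA$, $\SC$, $\PA$, $\PEA$ comes from the fact that the defining operations of all four signatures are interpretable in the homogeneous model $\M$ via the same first-order definitions, so the single Fra\"iss\'e construction simultaneously serves every $\K$ between $\SC$ and $\PEA$; one checks separately that the extra polyadic substitution and transposition operations, where present, are respected by the neat-reduct witnesses, which is routine given elimination of quantifiers.

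For item (2), the key conceptual move is to pass from $n$-dimensional neat reducts to the \emph{relation-algebra reduct}. Given the $k$-witness (a $\CA_n$ term, a first-order formula in $n$ variables whose existence we are assuming), I would use it to transport the neat-reduct separating construction into the $\Ra$ setting: the $\Ra$ reduct of a $\CA_k$ is obtained by taking the $2$-neat reduct and then defining composition and converse using one spare dimension, so a $k$-witness term lets me encode the relation-algebra operations inside the already-built algebra and thereby produce $\A\in S_c\Ra\CA_\omega\setminus\Ra\CA_k$ with $\A\preceq\A'\in\Ra\CA_k$. This is precisely where Hirsch's question from \cite{r} is answered, since the resulting strict separation also yields $\Ra\CA_k\subsetneq S_c\Ra\CA_k$ for all $k\geq 5$.

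The main obstacle, as the paper itself flags, is that the transfer from the cylindric to the relation-algebra side is not mechanical: relation rainbow algebras lack a cylindric basis for dimensions $\geq 4$, so one cannot simply read off the $\Ra$ result from the $\CA$ result by a formal substitution. The delicate step is verifying that the $k$-witness term genuinely certifies non-membership in $\Ra\CA_k$ while \emph{simultaneously} being preserved under the elementary equivalence that places $\A'$ back inside the class---that is, the witness must detect the failure of neat-embeddability into $k$ extra dimensions at the level of $\A$ without being an elementary property (otherwise $\A\preceq\A'$ would force $\A\in\Ra\CA_k$ as well). Showing that \pa's \ws\ in the relevant $\omega$-round game encodes exactly this non-elementary, finite-infinite obstruction, and that the bound $k\geq 5$ is what the witness requires, is the crux of the argument; the finite-round wins for \pe\ and the surrounding model theory are comparatively routine once the witness is in hand.
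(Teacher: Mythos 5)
Your proposal correctly identifies the Fra\"iss\'e/quantifier-elimination setup and the role of the $k$-witness term, but it substitutes the wrong engine for the actual argument, and in doing so leaves the crucial step unexplained. The paper's proof of this theorem is \emph{not} a K\"onig-lemma game argument; that machinery belongs to the rainbow construction in the later sections (and, as the paper itself notes, games on atom structures are ill-suited here because the algebras need not be atomic --- atomicity is only obtained as a separate add-on when $\tau_k$ exists). What the paper actually does is a cardinality twist: the full neat reduct $\A\cong\Nr_3\A_\omega$ is decomposed, via its finite atom structure $V$, as a finite Boolean product $\prod_{u\in V}\A_u$ of uncountable relativizations, in which $\A$ is one-dimensionally and quantifier-freely interpretable; one then replaces the component $\A_v$ at $v=\tau(u_1,\ldots,u_m)$ (together with its permuted/converse copies) by a countable complete elementary Boolean subalgebra and reapplies the same interpretation. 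First-order logic cannot see this cardinality change, so the twisted algebra $\B$ is an elementary (indeed complete) subalgebra of $\A$; but if $\B$ were $\Nr_n\D$ (resp.\ $\Ra\D$), the witness $\tau_k^{\D}$ evaluated at the uncountable components would be uncountable, while the inequality $\tau_k\leq\tau$ with $\tau$ expressible in the base signature forces that value below $f(\chi_{\tau(u_1,\ldots,u_m)})$, which lives in the deliberately countable component --- a contradiction.

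Your proposal never supplies this mechanism. You correctly observe that the witness must detect a non-elementary property, but you do not say what that property is; the answer is uncountability (preserved by $\tau_k$ on set algebras by definition of a witness, invisible to elementary equivalence), not a ``finite--infinite'' obstruction encoded in an $\omega$-round game. Without the product decomposition and the countable/uncountable swap there is no reason for your elementarily equivalent algebra to fall outside $\Nr_n\K_m$ or $\Ra\CA_k$, and the finite-round versus infinite-round dichotomy you lean on has no purchase on these (non-atomic, uncountable) algebras. The gap is therefore not cosmetic: the central idea of the proof is missing, even though the scaffolding around it (homogeneous model, QE guaranteeing fullness of the neat reduct, the role of the witness, the uniformity across the cylindric-like signatures) is correctly described.
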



A $k$ witness which is a $\CA_k$ term with special properties will be defined below. 
For $\CA$ and its relatives the idea is very much like that in \cite{MLQ}, the details implemented, in each separate case,
though are significantly distinct, because we look for terms not in the clone of operations
of the algebras considered; and as much as possible, we want these to use very little spare dimensions, hopefuly just one.

The relation algebra part is more delicate. We shall construct a relation algebra $\A\in \Ra\CA_{\omega}$ with a complete subalgebra $\B$,
such that $\B\notin \Ra\CA_k$, and $\B$ is elementary equivalent to $\A.$ (In fact, $\B$ will be an elementary subalgebra of $\A$.)

We work with $n=3$. One reason, is that for higher dimensions the proof is the same.
Another one is that in the relation algebra case, we do not need more dimensions.

Roughly, the idea is to use an uncountable cylindric algebra $\A\in \Nr_3\CA_{\omega}$, hence $\A$ is representable, together
with a finite atom structure of another simple cylindric algebra, that is also representable.

The former algebra will be a set algebra based on a homogeneous model, that admits elimination of quantifiers
(hence will be a full neat reduct).

Such a model is constructed using  Fraisse's methods of building models by amalgamating smaller parts.
The Boolean reduct of $\A$ can be viewed as a finite direct product of the of disjoint Boolean relativizations of $\A$.
Each component will be still uncountable; the product will be indexed by the elements of the atom structure.
The language of Boolean algebras can now be expanded by constants also indexed by the atom structure,
so that $\A$ is first order interpretable in this expanded structure $\P$ based on the finite Boolean product.
The interpretation here is one dimensional and quantifier free.

The $\Ra$ reduct of $\A$ be as desired; it will be a full $\Ra$ reduct of a full neat reduct of an $\omega$ 
dimensional algebra, hence an $\Ra$ reduct
of an $\omega$ dimensional algebra, and it has a
complete elementary equivalent subalgebra not in
$\Ra\CA_k$. (This is the same idea for $\CA$, but in this case, and the other cases of its relatives, one spare dimension suffices.)

This {\it elementary subalgebra} is obtained from $\P$, by replacing one of the components of the product with an elementary
{\it countable} Boolean subalgebra, and then giving it the same interpretation.
First order logic will not see this cardinality twist, but a suitably chosen term
$\tau_k$ not term definable in the language of relation algebras will, witnessing that the twisted algebra is not in $\Ra\CA_k$.

For $\CA$'s and its relatives, as mentioned in the previous paragraph,
we are lucky enough to have $k$ just $n+1,$
proving the most powerful result.

\begin{definition}
Let $k\geq 4$. A $k$ witness $\tau_k$ is $m$-ary term of $\CA_k$ with rank $m\geq 2$ such 
that $\tau_k$ is not definable in the language of relation algebras (so that $k$ has to be $\geq 4$)
and for which there exists a term $\tau$ expressible in the language of relation algebras, such that
$\CA_k\models \tau_k(x_1,\ldots x_m)\leq \tau(x_1,\ldots x_m).$ (This is an implication between two first order formulas using $k$-variables).

Furthermore, whenever $\A\in {\bf Cs}_k$ (a set algebra of dimension $k$) is uncountable,
and $R_1,\ldots R_m\in A$  are such that at least one of them is uncountable,
then $\tau_k^{\A}(R_1\ldots R_m)$ is uncountable as well.
\end{definition}

The following lemma, is available in \cite{Sayed} with a sketch of proof; it is fully 
proved in \cite{MLQ}. If we require that a (representable) algebra be a neat reduct,
then quantifier elimination of the base model guarantees this, as indeed illustrated in our fully proved next lemma.

\begin{lemma} Let $V=(\At, \equiv_i, {\sf d}_{ij})_{i,j<3}$ be a finite cylindric atom structure,
such that $|\At|\geq |{}^33.|$
Let $L$ be a signature consisting of the unary relation
symbols $P_0,P_1,P_2$ and
uncountably many tenary predicate symbols.
For $u\in V$, let $\chi_u$
be the formula $\bigwedge_{u\in V}  P_{u_i}(x_i)$.
Then there exists an $L$-structure $\M$ with the following properties:
\begin{enumarab}

\item $\M$ has quantifier elimination, i.e. every $L$-formula is equivalent
in $\M$ to a boolean combination of atomic formulas.

\item The sets $P_i^{\M}$ for $i<n$ partition $M$, for any permutation $\tau$ on $3,$
$\forall x_0x_1x_2[R(x_0,x_1,x_2)\longleftrightarrow R(x_{\tau(0)},x_{\tau(1)}, x_{\tau(2)}],$

\item $\M \models \forall x_0x_1(R(x_0, x_1, x_2)\longrightarrow
\bigvee_{u\in V}\chi_u)$,
for all $R\in L$,

\item $\M\models  \exists x_0x_1x_2 (\chi_u\land R(x_0,x_1,x_2)\land \neg S(x_0,x_1,x_2))$
for all distinct tenary $R,S\in L$,
and $u\in V.$

\item For $u\in V$, $i<3,$
$\M\models \forall x_0x_1x_2
(\exists x_i\chi_u\longleftrightarrow \bigvee_{v\in V, v\equiv_iu}\chi_v),$

\item For $u\in V$ and any $L$-formula $\phi(x_0,x_1,x_2)$, if
$\M\models \exists x_0x_1x_2(\chi_u\land \phi)$ then
$\M\models
\forall x_0x_1x_2(\exists x_i\chi_u\longleftrightarrow
\exists x_i(\chi_u\land \phi))$ for all $i<3$

\end{enumarab}
\end{lemma}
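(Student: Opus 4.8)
The plan is to realize $\M$ as a homogeneous limit of a suitable amalgamation class of finite $L$-structures, so that quantifier elimination (item (i)) comes for free from homogeneity while the remaining items are built into the defining class or secured by a richness condition on the chain. First I would let $\mathcal{K}$ be the class of all finite $L$-structures $N$ in which the $P_i^N$ partition $N$; every ternary relation of $L$ is symmetric in its three arguments (the permutation clause of (ii)); every triple $\bar a$ lying in some ternary relation realizes a colour pattern $\chi_u$ with $u\in V$ (item (iii)); and the local cylindric constraint of (v) holds, namely that whenever a triple of colour $u$ is completed by changing its $i$-th entry the colour of the completion is some $v$ with $v\equiv_i u$. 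Each of these is a universal or quantifier-free condition, hence inherited by substructures, so $\mathcal{K}$ has the hereditary property, and the one-element structures supply joint embedding.

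The crux is the amalgamation property. Given $N_0\hookrightarrow N_1,N_2$ in $\mathcal{K}$, I would amalgamate on the disjoint union $N_1\cup_{N_0}N_2$, retaining the unary colours and all ternary relations already present inside $N_1$ or inside $N_2$, and then assign colours and ternary relations to the \emph{cross} triples (those with entries strictly on both sides of $N_0$) so as to respect (ii), (iii) and (v). This is exactly where the hypothesis that $V$ is a genuine three-dimensional cylindric atom structure is used: the cylindric axioms governing $\equiv_i$ and $\diag{i}{j}$ guarantee that every ``triangle'' of prescribed colours is consistent, so each cross triple can be labelled by an atom $u\in V$ compatible with the $\equiv_i$-constraints imposed by its three faces, and the bound $|\At|\geq |{}^33|$ ensures there are always enough atoms to carry out this labelling for every colour pattern. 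I expect this consistency-of-triangles verification to be the main obstacle, since it amounts to matching a Maddux style colour-graph labelling against the full list of cylindric identities on $V$.

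With amalgamation in hand I would build $\M$ as the direct limit of a chain $(N_\xi)_{\xi<\kappa}$ in $\mathcal{K}$ that is \emph{rich}, in the sense that every one-point and one-tuple extension request is eventually honoured. Because $L$ carries uncountably many ternary symbols and item (iv) demands, for each $u$ and each pair of distinct ternary $R,S$, a triple of colour $u$ lying in $R$ but not in $S$, the chain must have length matching the number of ternary symbols in order to realize all these requests; transfinite bookkeeping over the requests then yields such a chain and an uncountable $\M$. Homogeneity of $\M$ gives item (i) by the standard Fraïssé argument; items (ii), (iii) and the forward direction of (v) persist in the limit because they define $\mathcal{K}$, while item (iv) and the genuine cylindrifier (existence) direction of (v) follow from richness of the chain.

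Finally, item (vi) is the homogeneity statement in disguise. Once quantifier elimination is established, the $L$-type of any realization of $\chi_u$ is determined by quantifier-free data, so if $\chi_u\wedge\phi$ is satisfiable in $\M$ then, by the extension property, a new point realizing $\phi$ can always be adjoined once the colours permit a $\chi_u$-completion. Hence projecting $\chi_u\wedge\phi$ along $x_i$ captures exactly the same tuples as projecting $\chi_u$ alone, which is precisely the asserted biconditional $\exists x_i\chi_u\leftrightarrow\exists x_i(\chi_u\land\phi)$. Thus all six items hold for the constructed $\M$.
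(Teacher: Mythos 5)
Your proposal follows exactly the route the paper intends: the paper's own ``proof'' of this lemma is nothing but a citation of \cite{MLQ}, which it describes elsewhere as constructing $\M$ by Fra\"iss\'e-style amalgamation of finite $L$-structures into a homogeneous model admitting elimination of quantifiers --- precisely your plan, including the transfinite bookkeeping needed because the signature is uncountable. One small remark: the amalgamation step you single out as the main obstacle is actually easier than you suggest, since one may take the free amalgam and place \emph{no} ternary relations on cross triples (the defining conditions only constrain triples that do lie in some relation, while items (iv)--(vi) are secured by richness and homogeneity rather than by the class itself), so no Maddux-style consistency-of-triangles argument against the cylindric identities on $V$ is needed.
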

\begin{proof}\cite{MLQ}
\end{proof}
\begin{lemma}\label{term}
\begin{enumarab}

\item For $\A\in \CA_3$ or $\A\in \SC_3$, there exist
a unary term $\tau_4(x)$ in the language of $\SC_4$ and a unary term $\tau(x)$ in the language of $\CA_3$
such that $\CA_4\models \tau_4(x)\leq \tau(x),$
and for $\A$ as above, and $u\in \At={}^33$,
$\tau^{\A}(\chi_{u})=\chi_{\tau^{\wp(^nn)}(u).}$

\item For $\A\in \PEA_3$ or $\A\in \PA_3$, there exist a binary
term $\tau_4(x,y)$ in the language of $\SC_4$ and another  binary term $\tau(x,y)$ in the language of $\SC_3$
such that $PEA_4\models \tau_4(x,y)\leq \tau(x,y),$
and for $\A$ as above, and $u,v\in \At={}^33$,
$\tau^{\A}(\chi_{u}, \chi_{v})=\chi_{\tau^{\wp(^nn)}(u,v)}.$


\end{enumarab}
\end{lemma}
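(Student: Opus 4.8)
\emph{Proof plan.} The statement asserts the existence of the two witnessing terms together with a compatibility between the action of $\tau$ on the characteristic elements $\chi_u$ and its action on the complex algebra $\wp({}^33)$ of the atom structure $V={}^33$. I would split the work into three tasks: (i) writing down explicit candidates for $\tau$ and $\tau_4$; (ii) proving the inequality $\CA_4\models\tau_4\leq\tau$ (resp.\ $\PEA_4\models\tau_4(x,y)\leq\tau(x,y)$); and (iii) proving the intertwining $\tau^{\A}(\chi_u)=\chi_{\tau^{\wp({}^33)}(u)}$. For (i), in the $\CA$/$\SC$ case I would take $\tau_4$ to be a composition of substitutions routed through the spare index $3$, e.g.\ a term of the shape $\cyl{3}\bigl(\diag{i}{3}\cdot\sub{j}{3}x\bigr)$ suitably iterated, chosen so that it genuinely mentions the index $3$ and is therefore not term-definable in the three-variable (relation-algebra) clone --- this is exactly why $k$ must be $\geq 4$. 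For $\tau$ I would take the $\CA_3$ term obtained by performing the ``same'' recolouring of coordinates using only the indices $0,1,2$, so that $\tau$ lies in the relation-algebra--expressible fragment and dominates $\tau_4$. In the $\PEA$/$\PA$ case the available transposition operators let me use a binary pattern combining its two arguments by a substitution-and-meet, the binary arity reflecting that the witnessing operation is genuinely $2$-ary.

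For (ii), the inequality is a first-order implication in $k=4$ variables, and I would establish it by a routine equational derivation from the cylindric (resp.\ polyadic) axioms: $\cyl{i}$ is additive and increasing, each $\sub{i}{j}$ is a Boolean endomorphism, and the standard commutation identities among $\sub{}{}$, $\cyl{}$ and $\diag{}{}$ let me collapse the occurrences of the index $3$ in $\tau_4$ so as to bound it above by $\tau$. Since the derivation invokes only the defining identities, the inequality then holds throughout the variety $\CA_4$ (resp.\ $\PEA_4$), not merely in its representable members.

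For (iii), the key observation is that the assignment $\chi\colon\wp({}^33)\to\A$, $X\mapsto\bigvee_{u\in X}\chi_u$, is an embedding of $\CA_3$'s onto the subalgebra generated by the $\chi_u$. That it is a Boolean embedding follows from item 2 of the previous lemma (the $P_i^{\M}$ partition $M$, whence the $\chi_u$ are pairwise disjoint with join the unit) together with item 4 (distinct ternary predicates genuinely separate the blocks); that it commutes with the diagonals is immediate, and that it commutes with each cylindrification is precisely item 5, $\cyl{i}^{\A}\chi_u=\bigvee_{v\equiv_i u}\chi_v=\chi_{\cyl{i}^{\wp({}^33)}u}$. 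Once $\chi$ is shown to intertwine every generator, an induction on the structure of $\tau$ yields $\tau^{\A}(\chi_u)=\tau^{\A}(\chi(\{u\}))=\chi(\tau^{\wp({}^33)}(\{u\}))=\chi_{\tau^{\wp({}^33)}(u)}$, which is the assertion; the $\PEA$/$\PA$ case is identical once the transpositions are adjoined as further intertwined generators and $\chi$ is applied to the pair $(\{u\},\{v\})$.

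The main obstacle I expect lies in the simultaneous balancing in steps (i)--(ii): $\tau_4$ must essentially use the fourth index (so as to fall outside the relation-algebra clone and serve as a $4$-witness), yet must be dominated by a three-variable term $\tau$; and the recolouring $\tau^{\wp({}^33)}$ must send each atom $u$ to a \emph{single} atom $\sigma(u)$ rather than to a proper join or to $0$. Guaranteeing the latter is where quantifier elimination of $\M$ (item 1) and the partition discipline (items 2 and 5) must be used carefully, to check that pushing the substitutions through $\chi_u$ never loses a coordinate (collapsing to a non-atom) nor forces two incompatible colour constraints on one coordinate (collapsing to $0$). The uncountability clause required of a genuine $k$-witness is then a separate bookkeeping check: because $\tau$ is relation-algebra--like and $\tau_4$ acts as an injective recolouring on the ternary predicate symbols, applying it to an uncountable family of such predicates yields an uncountable image.
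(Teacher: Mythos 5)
Your overall strategy --- exhibit explicit terms routed through the spare index $3$, prove the inequality equationally so that it holds throughout the variety $\CA_4$ (resp.\ $\PEA_4$), and verify the intertwining identity by showing that $X\mapsto\bigvee_{u\in X}\chi_u$ embeds $\wp({}^33)$ into $\A$ as a $\CA_3$ using items 2, 4 and 5 of the preceding lemma --- is exactly the route the paper takes; the paper's own proof consists of writing down the terms and citing \cite{FM} and \cite{MLQ} for this verification, which is precisely your steps (ii)--(iii). The gap is in your step (i): the lemma is an existence statement whose entire content is the terms themselves, and you never actually produce them, only a shape ``to be suitably iterated'' together with the constraints it must satisfy. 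For the record, in the cylindric case the paper takes $\tau_4(x)={}_3s(0,1)x$ (the transposition substitution swapping the coordinates $0$ and $1$, defined through the spare index $3$) and $\tau(x)=s_1^0c_1x\cdot s_0^1c_0x$; your candidate shape $c_3(d_{i3}\cdot s^3_jx)$ is indeed the right raw material, but until the iteration is pinned down neither the inequality nor the fact that $\tau^{\wp({}^33)}$ sends each atom $u$ to the single atom $u\circ[0,1]$ (rather than to a proper join or to $0$) can be checked.

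Two further points. First, you repeatedly frame the non-definability requirement as ``falling outside the relation-algebra clone''; for this lemma the relevant clone is that of $\CA_3$, $\SC_3$, $\PA_3$ or $\PEA_3$ --- the relation-algebra language only enters in the separate definition of a $k$-witness used later for the $\Ra\CA_k$ result. Second, and more substantively, your treatment of the $\PEA$/$\PA$ case misses the actual difficulty there: you present the transpositions as a convenience (``the available transposition operators let me use a binary pattern''), whereas they are the obstruction. In $\PA_3$ and $\PEA_3$ the substitution $s_{[0,1]}$ is a basic operation, so the natural unary candidate ${}_3s(0,1)x$ becomes term-definable already in three dimensions and is useless as a witness; this is exactly why the paper must pass to the binary terms $\tau(x,y)=c_1(c_0x\cdot s_1^0c_1y)\cdot c_1x\cdot c_0y$ and $\tau_4(x,y)=c_3(s_3^1c_3x\cdot s_3^0c_3y)$. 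Your justification for the binary arity (``reflecting that the witnessing operation is genuinely $2$-ary'') is circular, and a naive execution of your plan would only discover the failure of the unary candidate at the very last step.
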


\begin{proof}

\begin{enumarab}

\item For all reducts of polyadic algebras, these terms are given in \cite{FM}, and \cite{MLQ}.
For cylindric algebras $\tau_4(x)={}_3 s(0,1)x$ and $\tau(x)=s_1^0c_1x.s_0^1c_0x$.
For polyadic algebras, it is a little bit more complicated because the former term above is definable.
In this case we have $\tau(x,y)=c_1(c_0x.s_1^0c_1y).c_1x.c_0y$, and $\tau_4(x,y)=c_3(s_3^1c_3x.s_3^0c_3y)$.

\item  We omit the construction of such terms. But from now on, we assme that they exist.
\end{enumarab}
\end{proof}

\begin{theorem}
\begin{enumarab}
\item There exists $\A\in \Nr_3\QEA_{\omega}$
with an elementary equivalent cylindric  algebra, whose $\SC$ reduct is not in $\Nr_3\SC_4$.
Furthermore, the latter is a complete subalgebra of the former.

\item Assume that there is $k$ witness. Then there exists a 
relation algebra $\A\in \Ra\CA_{\omega}$, with an elementary equivalent relation algebra not in $\Ra\CA_k$.
Furthermore, the latter is a complete subalgebra of the former.
\end{enumarab}
\end{theorem}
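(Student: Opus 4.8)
The plan is to realise both items as two faces of a single construction: an \emph{uncountable full neat reduct} whose ``block structure'' over a fixed finite atom structure can be perturbed in exactly one block without first‑order logic noticing, while a term living one dimension up detects the perturbation. Starting from the atom structure $V$ of the quantifier‑elimination lemma above with $\At={}^33$, I would take the homogeneous $L$‑structure $\M$ it provides and let $\A$ be the set algebra of all $3$‑ary $\M$‑definable relations, read in the $\QEA$ signature (resp., with composition and converse defined using one spare coordinate, in the $\Ra$ signature). Because $\M$ admits elimination of quantifiers, every element of the abstract neat reduct is already definable by a $3$‑variable formula, so $\A$ is a \emph{full} neat reduct, $\A\in\Nr_3\QEA_{\omega}$ (resp.\ $\A\in\Ra\CA_{\omega}$); this is the fact recorded before Lemma~\ref{term} and proved in \cite{MLQ}. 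Since $L$ carries uncountably many ternary predicates and, by item~(4) of that lemma, distinct predicates stay distinct inside each block $\chi_u$, the algebra $\A$ is uncountable and every relativisation $\Rl_{\chi_u}\A$ ($u\in V$) is uncountable.

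Next I would use the partition $\{\chi_u:u\in V\}$ of item~(2) to present the Boolean reduct of $\A$ as the finite product $\prod_{u\in V}\Rl_{\chi_u}\A$, and expand this product by constants naming the $\chi_u$ to a structure $\P$ in which, as the text asserts, $\A$ is first‑order interpretable \emph{one‑dimensionally and quantifier‑free}, the cylindric/relation operations being recovered from the Boolean product together with the finitely many parameters $\chi_u$ and the block permutation $\tau^{\wp({}^33)}$ of Lemma~\ref{term}. For the twist, fix $u_0=(a,b,c)\in{}^33$ with $a\ne b$ and, by downward L\"owenheim--Skolem, choose a \emph{countable elementary} Boolean subalgebra $\A_{u_0}'\preceq\Rl_{\chi_{u_0}}\A$, arranging in the same pass that the embedding be \emph{regular} (existing suprema preserved). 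Replacing the $u_0$‑factor of $\P$ by $\A_{u_0}'$ and keeping all other data yields $\P'$; since elementary substructure passes through finite products factor by factor (a Feferman--Vaught argument) we get $\P'\preceq\P$, and the same quantifier‑free interpretation produces $\B$ with $\B\preceq\A$. As suprema in a finite product are computed coordinatewise and the unperturbed factors embed by the identity, regularity in the single twisted block makes $\B$ a \emph{complete} subalgebra of $\A$. This already delivers elementary equivalence and the completeness clauses of both items.

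It remains to detect the twist. Suppose $\Rd_{\SC}\B=\Nr_3\D$ with $\D\in\SC_4$. Then the term $\tau_4$ of Lemma~\ref{term} is a genuine operation on $\B$ whose value on any $3$‑dimensional argument is again $3$‑dimensional, hence lies in $\Nr_3\D=\Rd_{\SC}\B$; monotonicity gives $\tau_4(R)\le\tau_4(\chi_u)\le\tau(\chi_u)=\chi_{\tau^{\wp({}^33)}(u)}$ for $R\le\chi_u$. Choosing $u\ne u_0$ with $\tau^{\wp({}^33)}(u)=u_0$ (possible since $a\ne b$ makes the governing transposition move $u_0$), the involutive $\tau_4$ maps $\Rl_{\chi_u}\B$ injectively into $\Rl_{\chi_{u_0}}\B$; as $u$ is an \emph{unperturbed}, hence uncountable, block this forces $\Rl_{\chi_{u_0}}\B$ uncountable, contradicting $\Rl_{\chi_{u_0}}\B=\A_{u_0}'$ countable. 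Hence $\Rd_{\SC}\B\notin\Nr_3\SC_4$. The $\Ra$ case runs along the same lines with $\tau_4$ replaced by the assumed $k$‑witness $\tau_k$: from $\tau_k\le\tau$ one gets $\tau_k^{\D}(\bar R)\le\tau^{\B}(\bar R)=\chi_{u_0}$ for suitable $\bar R$ drawn from uncountable blocks, while the defining uncountability‑preservation of $\tau_k$ forces $\tau_k^{\D}(\bar R)$ uncountable, which is incompatible with its sitting below the twisted block $\chi_{u_0}$.

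I expect two genuinely delicate points. The first is the simultaneous demand, in the twist, for \emph{elementarity together with regularity} of the countable factor: this is what underwrites the ``complete subalgebra'' assertion and is precisely where the present construction improves on Hirsch's for relation algebras, where the analogous subalgebra fails to be complete. The second, and the conceptual heart of item~2, is the cardinality bookkeeping in the detection step for relation algebras: unlike the bijective $\SC$ operation $\tau_4$, the witness $\tau_k$ need neither be injective nor have its value a priori inside $\B=\Ra\D$ rather than $\D$, so transferring ``uncountable in $\D$ below $\chi_{u_0}$'' into a contradiction with ``the $u_0$‑block of $\B$ is countable'' is exactly the work done by the $k$‑witness hypothesis. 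Confirming that such a $k$‑witness exists --- with $k=n+1$ in the cylindric‑type cases and some $k\ge5$ in the relation‑algebra case --- is the one remaining non‑formal input, and is the step I would expect to be the main obstacle.
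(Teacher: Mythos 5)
Your overall strategy is exactly the paper's: the same homogeneous model $\M$ with quantifier elimination, the same presentation of $\Bl\A$ as the finite product $\prod_{u\in V}\A_u$ with a one-dimensional quantifier-free interpretation of $\A$ in the expanded product $\P$, the same cardinality twist of one factor, and the same detection of the twist by the extra-dimensional term $\tau_4$ (resp.\ the $k$-witness $\tau_k$). Your item-1 detection via injectivity of $\tau_4$ on a block mapped onto the twisted block is in fact cleaner than the paper's write-up.

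There is, however, one genuine gap: you replace \emph{only} the $u_0$-factor by a countable elementary subalgebra. This is not enough whenever the signature of $\B$ contains an operation that permutes the blocks \emph{injectively} -- converse in the relation-algebra case, and the transposition substitutions ${\sf s}_{[i,j]}$ in the $\QEA/\PEA$ case. Converse maps $\A_{\breve{u_0}}$ bijectively onto the elements below $\chi_{u_0}$; since you deliberately chose $u_0=(a,b,c)$ with $a\neq b$ so that the governing transposition moves $u_0$, the block $\breve{u_0}$ is a \emph{different}, unperturbed, hence uncountable factor, and its image under converse consists of uncountably many elements below $\chi_{u_0}$. So the set $\{x: x\cdot\chi_{u_0}\in \A_{u_0}'\}$ is not closed under converse and is not a subalgebra at all; the same failure occurs for ${\sf s}_{[i,j]}$ in the polyadic signature. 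This is precisely why the paper makes the twisted component countable \emph{together with all its permuted versions}: it takes $J=\{u_1,u_2\}\cup\{s_{[i,j]}v: i,j<3\}$ (resp.\ $\{v,\breve v\}$) and shrinks every factor indexed by the orbit of $v$, keeping the arguments $u_1,\ldots,u_m$ of $\tau$ outside that orbit so the detection still goes through. Note that the pure $\CA/\SC$ case survives your single-block twist only because of a non-obvious fact you should make explicit: by quantifier elimination (property (6) of the lemma), every $3$-dimensional term collapses each block to finitely many values when it crosses into another block, so no $\SC_3$-operation transports uncountably many elements into $\chi_{u_0}$; it is only the $4$-dimensional $\tau_4$ that is injective across blocks, which is the whole point of the construction. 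You need to both record that closure argument for item 1 and repair the orbit issue for item 2 (and for any signature with transpositions).
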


\begin{proof} Let $\L$ and $\M$ as above. Let
$\A_{\omega}=\{\phi^M: \phi\in \L\}.$
Clearly $\A_{\omega}$ is a locally finite $\omega$-dimensional ylindric set algebra.

The proof for $\CA$s; and its relatives are very similar. Let us prove it for $\PEA$. Here we have to add a condition to our constructed model.
Assume that the relation symbols are indexed by an uncountable set $I$.
We assume that there is a group structure on $I$, and that $R_i\circ R_j=R_{i+j}$.
We take $\At=({}^33, \equiv_i, \equiv_{ij}, d_{ij})$, where
for $u,v\in \At$, $u\equiv_i v$ iff $u$ and $v$ agree off $i$ and $v\equiv_{ij}u$ iff $u\circ [i,j]=v$. We denote $^33$ by $V$.

By the symmetry condition we have $\A$ is a $\PEA_3$, and
$\A\cong \Nr_3\A_{\omega}$, the isomorphism is given by
$\phi^{\M}\mapsto \phi^{\M}.$
Quantifier elimination in $\M$ guarantees that this map is onto, so that $\A$ is the full  neat reduct.
For $u\in {}V$, let $\A_u$ denote the relativisation of $\A$ to $\chi_u^{\M}$
i.e $\A_u=\{x\in A: x\leq \chi_u^{\M}\}.$ Then $\A_u$ is a Boolean algebra.
Also  $\A_u$ is uncountable for every $u\in V$
because by property (iv) of the above lemma,
the sets $(\chi_u\land R(x_0,x_1,x_2)^{\M})$, for $R\in L$
are distinct elements of $\A_u$.   Define a map $f: \Bl\A\to \prod_{u\in {}V}\A_u$, by
$f(a)=\langle a\cdot \chi_u\rangle_{u\in{}V}.$
We expand the language of the Boolean algebra $\prod_{u\in V}\A_u$ by constants in
such a way that
$\A$ becomes interpretable in the expanded structure (see the next proof for a detailed description of these constants)

$\P$ denotes the
structure $\prod_{u\in {}V}\A_u$ for the signature of Boolean algebras expanded
by constant symbols $1_{u}$ for $u\in V$
and ${\sf d}_{ij}$ for $i,j\in 3$ as in \cite{Sayedneat}. The closed terms corresponding to substitutions are
given by $h_S=\{v: \exists u\in S: v\equiv_{ij}u\}$.

In more detail let $\P$ denote the 
following structure for the signature of boolean algebras expanded
by constant symbols $1_u$ for $u\in {}V$ and ${\sf d}_{ij}$ for $i,j\in \alpha$: 

\begin{enumarab}
\item The Boolean part of $\P$ is the boolean algebra $\prod_{u\in {}V}\A_u$,

\item $1_u^{\P}=f(\chi_u^{\M})=\langle 0,\cdots0,1,0,\cdots\rangle$ 
(with the $1$ in the $u^{th}$ place)
for each $u\in {}V$,

\item ${\sf d}_{ij}^{\P}=f({\sf d}_{ij}^{\A})$ for $i,j<\alpha$.
\end{enumarab}

Define a map $f: \Bl\A\to \prod_{u\in {}V}\A_u$, by
$$f(a)=\langle a\cdot \chi_u\rangle_{u\in{}V}.$$

Then there are quantifier free formulas
$\eta_i(x,y)$ and $\eta_{ij}(x,y)$ such that
$\P\models \eta_i(f(a), b)$ iff $b=f(c_i^{\A}a)$ and
$\P\models \eta_{ij}(f(a), b)$ iff $b=f(s_{[i,j]}a).$

Now, like the $\CA$ case, $\A$ is interpretable in $\P$, and indeed the interpretation is one dimensional and quantifier free.
For $v\in V$, let $\B_v$ be a complete countable elementary subalgebra of $\A_v$.
Then proceed like the $\CA$ case, except that we take a different product, since we have a different atom structure, with relations
for substitutions:
Let $u_1, u_2\in V$ and let $v=\tau_(u_1,u_2)$, as given in the above lemma.
Let $J=\{u_1,u_2, s_{[i,j]v}: i, j<3\}$.
Let  $\B=\A_{u_1}\times \A_{u_2}\times \B_{v}\times \prod_{i,j<3, i\neq j} \B_{s_{[i,j]}v}\times \prod_{u\in V\sim J} \A_u$
inheriting the same interpretation. Notice that here we made all the permuted versions of $\B_v$ countable, so that $B_v$ {\it remains} countable,
because substitutions corresponding to transpositions
are present in our signature, so if one of the permuted components is uncountable, then $\B_{v}$ would be uncountable, and we do not want that.

The contradiction follows from the fact that had  $\B$ been a neat reduct, say $\B=\Nr_3\D$
then the term $\tau_3$ as in the above lemma, using $4$ variables, evaluated in $\D$ will force the component $\B_v$ to be uncountable,
which is not the case by construction.

For the second part; for relation algebras.
The $\Ra$ reduct of $\A$ is a generalized reduct of $\A$, hence $\P$ is first order interpretable in $\Ra\A$, as well.
It follows that there are closed terms and a formula $\eta$ built out of these closed terms such that
$$\P\models \eta(f(a), b, c)\text { iff }b= f(a\circ^{\Ra\A} c),$$
where the composition is taken in $\Ra\A$.
Here $\At$ defined depends on $\tau_k$ and $\tau$, so we will not specify it any further,
we just assume that it is finite.

As before, for each $u\in \At$, choose any countable Boolean elementary
complete subalgebra of $\A_{u}$, $\B_{u}$ say.
Le $u_i: i<m$ be elements in $\At$, and let $v=\tau(u_1,\ldots u_m)$.
Let $$Q=(\prod_{u_i: i<m}\A_{u_i}\times \B_{v}\times \times \B_{\breve{b}}\times \prod_{u\in {}V\smallsetminus \{u_1,\ldots u_m, v, \breve{v}\}}
\A_u), t_j)_{u,v\in {}V,i,j<3}\equiv$$
$$(\prod_{u\in V} \A_u, 1_{u,v}, {\sf d}_{ij})_{u\in V, i,j<3}=\P.$$

Let $\B$ be the result of applying the interpretation given above to $Q$.
Then $\B\equiv \Ra\A$ as relation  algebras, furthermore $\Bl\B$ is a complete subalgebra of $\Bl\A$.
Now we use essentially the same argument. We force the $\tau(u_1,\ldots u_m)$
component together with its permuted versions (because we have converse) countable;
the resulting algebra will be a complete elementary subalgebra of the original one, but $\tau_k$
will force our twisted countable component to be uncountable, arriving at a contradiction.

In more detail, assume for contradiction that $\B=\Ra\D$ with $\D\in \CA_k$.
Then $\tau_k^{\D}(f(\chi_{u_1}),\ldots f(\chi_{u_n}))$, is uncountable in $\D$.
Because $\B$ is a full $\RA$ reduct,
this set is contained in $\B.$  For simplicity assume that $\tau^{\Cm\At}(u_1\ldots u_m)=Id.$
On the other hand, for $x_i\in B$, with $x_i\leq \chi_{u_i}$, let $\bar{x_i}=(0\ldots x_i,\ldots)$ 
with $x_i$ in the $uth$ place.
Then we have
$$\tau_k^{\D}(\bar{x_1},\ldots \bar{x_m})\leq \tau(\bar{x_1}\ldots \bar{x_m})\in \tau(f(\chi_{u_1}),\ldots f({\chi_{u_m}}))
=f(\chi_{\tau(u_1\ldots u_m)})=f(\chi_{Id}).$$
But this is a contradiction, since  $\B_{Id}=\{x\in B: x\leq \chi_{Id}\}$ is  countable and $f$ is a Boolean isomorphism.
\end{proof}

\subsection{Neat atom structures}

We note that the construction here is actually stronger than the one given for finite dimensions, since it provides atomic algebras $\A$ and $\B$, 
so that we can talk about their atom structures, and it also encompasses the finite dimensional case.

$R$ be an uncountable set and let $Cof R$ be set of all non-empty finite or cofinite subsets  $R$.
Let $\alpha$ be an ordinal. For $k$ finite, $k\geq 1$, let
$$S(\alpha,k)=\{i\in {}^\alpha(\alpha+k)^{(Id)}: \alpha+k-1\in Rgi\},$$
$$\eta(X)=\bigvee \{C_r: r\in X\},$$
$$\eta(R\sim X)=\bigwedge\{\neg C_r: r\in X\}.$$


We give a construction for cylindric algebras for all dimensions $>1$.
Let $\alpha>1$ be any ordinal. $(W_i: i\in \alpha)$ be a disjoint family of sets each of cardinality $|R|$.
Let $M$ be their disjoint union, that is
$M=\bigcup W_i$. Let $\sim$ be an equvalence relation on $M$ such that $a\sim b$ iff $a,b$ are in the same block.
Let $T=\prod W_i$. Let $s\in T$, and let $V={}^{\alpha}M^{(s)}$. 
For $s\in V$, we write $D(s)$ if $s_i\in W_i$, and we let $\C=\wp(V)$.
 
\begin{lemma}
There are $\alpha$-ary relations $C_r\subseteq {}^{\alpha}M^{(s)}$ on the base $M$ for all $r\in R$,
such that conditions (i)-(v) below hold:
\begin{enumroman}
\item $\forall s(s\in C_r\implies D(s))$

\item For all $f\in {}^{\alpha}W^{(s)}$ for all $r\in R$, for all permutations
$\pi\in ^{\alpha}\alpha^{(Id)}$, if $f\in C_r$ then $f\circ \pi\in C_r.$ 

\item For all $1\leq k<\omega$, for all 
$v\in {}^{\alpha+k-1}W^{(s)}$ one to one,  for all $x\in W$, $x\in W_m$ say, then for any
function $g:S(\alpha,k)\to Cof^+R$ 
for which $\{i\in S(\alpha,k):|\{g(i)\neq R\}|<\omega\}$, 
there is a $v_{\alpha+k-1}\in W_m\smallsetminus Rgv$ such that 
and  
$$\bigwedge \{D(v_{i_j})_{j<\alpha}\implies \eta(g(i))[\langle v_{i_j}\rangle]: 
i\in S(\alpha,k)\}.$$
\item The $C_r$'s are pairwise disjoint.
\end{enumroman}
\end{lemma}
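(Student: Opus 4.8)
The plan is to produce the whole family $\{C_r:r\in R\}$ at once, as a single \emph{generic colouring} of the diagonal tuples of $V$, built by a transfinite recursion that meets every instance of the extension requirement (iii) separately.

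First I would repackage the data. Conditions (i) and (iv) say precisely that the $C_r$ are pairwise disjoint subsets of the set $\Delta=\{f\in V:D(f)\}$ of \emph{diagonal} tuples; equivalently they are the fibres of a partial colouring $\chi\colon\Delta\rightharpoonup R$, with $f\in C_r\iff\chi(f)=r$. Condition (ii) says that $\chi$ is invariant under the finitary-permutation action $f\mapsto f\circ\pi$, so it is enough to colour each permutation-orbit once. The entire content of the lemma is therefore the single extension axiom (iii): for each $1\le k<\omega$, each one-to-one $v\in{}^{\alpha+k-1}M^{(s)}$, each block index $m$, and each $g\colon S(\alpha,k)\to\Cof^+R$ whose \emph{restrictive support} $\{i:g(i)\ne R\}$ is finite, there is a fresh last node $v_{\alpha+k-1}\in W_m\setminus\rg\,v$ for which every reindexed tuple $\langle v_{i_j}\rangle_{j<\alpha}$ that is diagonal receives a colour in $g(i)$.

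Next comes the recursion. A requirement is a triple $\rho=(k,v,g)$ as in (iii); since $|S(\alpha,k)|\le|\alpha|$, $|\Cof^+R|=|R|$, and each admissible $g$ has finite restrictive support, there are at most $|R|$ requirements, assuming (as one may) $|\alpha|\le|R|$ with $|R|$ regular. I would enumerate them as $(\rho_\xi)_{\xi<|R|}$ and build an increasing chain of partial colourings $\chi_\xi$. At stage $\xi$, with $\rho_\xi=(k,v,g)$, I pick $v_{\alpha+k-1}\in W_m$ genuinely \emph{fresh}: outside $\rg\,v$ and outside the set of all nodes already coloured at earlier stages. Because the new node has never been used and every pattern $i\in S(\alpha,k)$ places it in some coordinate (as $\alpha+k-1\in\rg\,i$), each diagonal tuple $\langle v_{i_j}\rangle_{j<\alpha}$ selected by the guard $D(\cdots)$ is so far uncoloured and so is mine to colour. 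When several patterns $i_1,\dots,i_t$ produce the same tuple, I must choose a colour in $g(i_1)\cap\cdots\cap g(i_t)$; only finitely many of these sets differ from $R$, so this is a finite intersection of cofinite sets, hence a nonempty (indeed cofinite) subset of $R$. I take any colour there and colour the entire permutation-orbit of the tuple with it. Setting $\chi=\bigcup_{\xi}\chi_\xi$ completes the construction.

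Finally, the verification and the obstacle. Properties (i) and (iv) hold because $\chi$ only ever colours diagonal tuples and assigns each at most one colour; (ii) holds because whole orbits were coloured uniformly at every stage; and (iii) holds because requirement $\rho_\xi$ was met outright at stage $\xi$. I expect the main obstacle to be global consistency together with never exhausting the supply of witnesses. The two ways a later stage might spoil an earlier one --- recolouring an already-coloured tuple, or breaking symmetry --- are both eliminated by insisting on a \emph{completely fresh} final node, so that all tuples touched at a stage are new and are coloured orbit-by-orbit. The delicate but routine point is the cardinality bookkeeping that always leaves a fresh node available in $W_m$: here one uses that $\rg\,v\cap W_m$ is finite (by finite support), that fewer than $|R|$ nodes are touched before stage $\xi$, and that $|W_m|=|R|$ is regular. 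And it is exactly the finiteness of the restrictive support of $g$ that saves the conflict-resolution step, by reducing it to a finite intersection of cofinite subsets of $R$.
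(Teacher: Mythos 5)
First, a caveat: the paper itself supplies no proof of this lemma --- it is stated and the text moves directly on to the atom-structure theorem, the intended argument apparently being delegated to the cited earlier work --- and the statement as printed is visibly corrupted (it announces conditions (i)--(v) but lists four, and clause (iii) is syntactically incomplete). So there is no in-paper proof to measure you against. Your reconstruction of the statement as a genericity requirement on a permutation-invariant partial colouring of the diagonal tuples is reasonable, and the transfinite ``meet each extension requirement with a completely fresh witness'' recursion is the standard and essentially correct strategy for lemmas of this kind.

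That said, there is one genuine gap, located in the conflict-resolution step. Because $v$ is one-to-one and $v_{\alpha+k-1}\notin\rg v$, two distinct patterns $i_1,i_2\in S(\alpha,k)$ never produce literally the same tuple $v\circ i$, so the conflict you describe does not actually occur. The conflict that does occur sits one level up: since you colour orbit-by-orbit in order to secure (ii), and since $i\circ\pi\in S(\alpha,k)$ for every finite permutation $\pi$ of $\alpha$, the patterns $i$ and $i\circ\pi$ yield tuples in the \emph{same orbit}, which must therefore receive a single colour lying in both $g(i)$ and $g(i\circ\pi)$. Your justification that the needed intersection is ``a finite intersection of cofinite sets'' is then wrong on its face: ${\sf Cof}^+R$ contains the nonempty \emph{finite} subsets of $R$ as well as the cofinite ones, so $g(i)$ may be a singleton $\{r_0\}$ while $g(i\circ\pi)=R\smallsetminus\{r_0\}$, in which case the intersection is empty and, in the presence of (ii), the requirement is genuinely unsatisfiable. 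You must therefore either assume the admissible $g$ are compatible on permutation classes (say, constant on the orbit of each $i$ under composition with finite permutations) or run the recursion over a set of orbit representatives and read $g$ off accordingly; once that hypothesis or reduction is made explicit, the remaining bookkeeping --- freshness of the witness, $|W_m|=|R|$, only finitely many deviations from $s$ per requirement, hence fewer than $|R|$ excluded nodes at each stage provided $|\alpha|\le|R|$ --- does go through.
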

If an atom structure has one completely representable algebra, then all algebras based on this atom structure are completey representable.
Here we show that in contrast, there is an atom structure $\At$ and $\A\in \Nr_{\alpha}\K_{\alpha+\omega}$, 
$\B\notin \Nr_{\alpha}\K_{\alpha+1}$, such that $\At\A=\At\B=\At$. Futhermor $\A$ and $\B$ are not elementary equivalent.
\begin{theorem} For every ordinal $\alpha>1$, there exists an atom structure that is not neat. 
\end{theorem}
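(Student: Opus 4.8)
The plan is to realise the required atom structure $\At$ through the relations $C_r$ ($r\in R$) furnished by the preceding lemma, and then to exhibit two atomic algebras sitting on $\At$ whose global (as opposed to atomic) Boolean structure differs: a ``rich'' algebra $\A$ that is a genuine $\alpha$-neat reduct in $\omega$ extra dimensions, and a ``blurred'' algebra $\B$ whose $C_r$-region is governed by the finite--cofinite subsets of $R$ and which is not even an $\alpha$-neat reduct in one extra dimension. First I would take $\At$ to be the cylindric atom structure whose atoms are the $C_r$, carrying the block/diagonal data recorded by $D$ and $\sim$, with $\equiv_i$ and $\diag{i}{j}$ read off from the weak set algebra $\wp(V)$. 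Clauses (i), (ii) and (iv) of the lemma guarantee that the $C_r$ are pairwise disjoint, invariant under the permutations $\pi\in{}^\alpha\alpha^{(Id)}$, and supported on the generic sequences; hence these operations are well defined on atoms and $\At$ is a legitimate $\K$ atom structure for each signature in play.

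Next I would build $\A$ as the full $\alpha$-neat reduct. Following the method of the first subsection, I would present $\A$ as $\Nr_\alpha$ of an $(\alpha+\omega)$-dimensional weak set algebra obtained from a homogeneous base admitting quantifier elimination. The key observation is that clause (iii) of the lemma is precisely an extension/amalgamation property in the coordinate $\alpha+k-1$, valid for every finite $k$: it is what permits one to add dimensions coherently. Running this for all $k<\omega$ and passing to the limit gives $\A\in\Nr_\alpha\K_{\alpha+\omega}$, while quantifier elimination of the base forces the neat reduct to be \emph{full}. The algebra $\A$ is atomic and $\At\A=\At$, since its atoms are exactly the $C_r$.

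I would then define $\B$ to be the subalgebra in which the region below $\eta(R)=\bigvee\{C_r:r\in R\}$ is generated only by the finite and cofinite joins $\eta(X),\ \eta(R\sim X)$ with $X$ finite, all other components being as in $\A$. Since $\B$ still contains every atom $C_r$ and the cylindric operations act on atoms exactly as in $\A$, one gets $\At\B=\At\A=\At$; thus $\At$ carries both a neat reduct and a non-neat reduct, which is the sense in which $\At$ is ``not neat''. To see $\B\notin\Nr_\alpha\K_{\alpha+1}$ I would run the finite--infinite contradiction behind Lemma~\ref{term}: if $\B=\Nr_\alpha\D$ with $\D\in\K_{\alpha+1}$, then evaluating the witness term $\tau_k$ --- which lives in the single extra dimension and is dominated by an $\alpha$-dimensional term $\tau$ --- on suitable elements below the $C_r$ would, by the ``uncountability'' clause of the $k$-witness, force the $\eta(R)$-region of $\B$ to contain uncountably many distinct pieces, contradicting that this region is only finite--cofinite. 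Finally, for $\A\not\equiv\B$ I would note that their Boolean reducts are already not elementarily equivalent: below $\eta(R)$ the reduct of $\A$ is the full power set of the $C_r$, whose quotient by the ideal of finite elements is atomless, whereas in $\B$ it is the finite--cofinite algebra, whose quotient is trivial. By the Tarski--Ershov analysis of elementary equivalence for Boolean algebras this difference is captured by a first-order Boolean sentence, and since the Boolean reduct is a first-order reduct of the cylindric algebra, that sentence pulls back to a cylindric sentence true in exactly one of $\A,\B$.

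I expect the main obstacle to be the step $\A\in\Nr_\alpha\K_{\alpha+\omega}$: making clause (iii) do its work uniformly in $k$ and assembling a coherent limit so that all $\omega$ extra dimensions are added at once while the neat reduct stays full. This is the technical heart, and it is also where one must check most carefully that the added higher-dimensional structure does not perturb the atoms, so that $\At\A$ genuinely coincides with $\At\B$. A secondary delicate point is squeezing the non-neatness down to the single extra dimension $\alpha+1$ in the $\B$ step, which is exactly what makes the result sharp.
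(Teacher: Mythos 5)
Your route is genuinely different from the paper's. The paper does not use the $C_r$ lemma or the $k$-witness term here at all: it takes $\At$ to be $V=\{s\in{}^{\alpha}\F:|\{i:s_i\neq 0\}|<\omega\}$ for a field $\F$ of characteristic $0$, shows $\wp(V)\in\Nr_{\alpha}\CA_{\alpha+\omega}$ by the explicit isomorphism $X\mapsto\{s\in{}^{\alpha+\omega}\F^{(0)}:s\upharpoonright\alpha\in X\}$ onto $\Nr_{\alpha}\wp(W)$, and then builds the non-neat algebra on the same atom structure from the arithmetically defined element $y=\{s\in V:s_0+1=\sum_{i>0}s_i\}$, the point being that the subalgebra generated by $y$ and the atoms cannot be a neat reduct in even one extra dimension. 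No quantifier elimination, no blurring, no witness term is invoked in the proof actually given (which, to be fair, is left visibly unfinished in the text).

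There are two concrete gaps in your version. First, the $C_r$ furnished by the lemma are pairwise disjoint $\alpha$-ary relations on $V$, but nothing in the lemma says they partition the unit or that the set $\{C_r:r\in R\}$ is closed in the right way under the induced $\equiv_i$ and diagonal data; so ``the cylindric atom structure whose atoms are the $C_r$'' is not yet defined, and the complex algebra over it is not a subalgebra of $\wp(V)$, so the claim $\At\A=\At\B$ and the membership $\A\in\Nr_{\alpha}\K_{\alpha+\omega}$ both rest on a structure you have not actually exhibited. Second, and more seriously, your contradiction for $\B\notin\Nr_{\alpha}\K_{\alpha+1}$ does not close: since $R$ is uncountable, the finite--cofinite algebra $\Cof R$ already has uncountably many elements, so ``$\tau_k$ forces uncountably many distinct pieces below $\eta(R)$'' is perfectly compatible with the $\eta(R)$-region being finite--cofinite. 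The cardinality clause of the $k$-witness (uncountable arguments give an uncountable value) produces an uncountable \emph{set}, not an element outside the finite--cofinite field; to get a contradiction you would need $\tau_k$ to yield a join of atoms that is infinite and co-infinite, and nothing in the definition of a $k$-witness guarantees that. In the paper's Theorem 1.5 the analogous contradiction works because a single component $\B_v$ is deliberately replaced by a \emph{countable} elementary subalgebra, and that is the cardinality that $\tau_k$ violates; your finite--cofinite replacement does not reproduce that countability. The secondary claim that $\A\not\equiv\B$ via Tarski--Ershov invariants of the Boolean reducts also needs care (invariants of a product can mask a difference in one factor), but that claim is not needed for the theorem as stated.
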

\begin{demo}{Proof}
Let $\alpha>1$ and $\F$ is field of characteristic $0$. 
Let 
$$V=\{s\in {}^{\alpha}\F: |\{i\in \alpha: s_i\neq 0\}|<\omega\},$$
Note that $V$is a vector space over the field $\F$. 
We will show that $V$ is a weakly neat atom structure that is not strongly neat.
Indeed $V$ is a concrete atom structure $\{s\}\equiv _i \{t\}$ if 
$s(j)=t(j)$ for all $j\neq i$, and
$\{s\}\equiv_{ij} \{t\}$ if $s\circ [i,j]=t$.

Let $\C$ be the full complex algebra of this atom structure, that is
$${\C}=(\wp(V),
\cup,\cap,\sim, \emptyset , V, {\sf c}_i,{\sf d}_{ij}, {\sf s}_{ij})_{i,j\in \alpha}.$$  
Then clearly $\wp(V)\in \Nr_{\alpha}\CA_{\alpha+\omega}$.
Indeed Let $W={}^{\alpha+\omega}\F^{(0)}$. Then
$\psi: \wp(V)\to \Nr_{\alpha}\wp(W)$ defined via
$$X\mapsto \{s\in W: s\upharpoonright \alpha\in X\}$$
is an isomomorphism from $\wp(V)$ to $\Nr_{\alpha}\wp(W)$.
We shall construct an algebra $\A$ such that $\At\A\cong V$ but $\A\notin \Nr_{\alpha}\CA_{\alpha+1}$.

Let $y$ denote the following $\alpha$-ary relation:
$$y=\{s\in V: s_0+1=\sum_{i>0} s_i\}.$$
Note that the sum on the right hand side is a finite one, since only 
finitely many of the $s_i$'s involved 
are non-zero. 
\end{demo}
\begin{theorem} If $\tau_k$ exists then $\A$ and $\B$ can be chosen to be atomic
\end{theorem}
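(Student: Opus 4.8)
The plan is to revisit the algebras $\A$ and $\B$ produced in the proof of the previous theorem and to arrange the underlying model $\M$ so that the full neat reduct $\A\cong\Nr_3\A_\omega$ (and hence its $\Ra$ reduct) is atomic, after which atomicity of $\B$ comes essentially for free. Recall that $\A$ is interpreted one-dimensionally and quantifier-freely in the Boolean product $\P=\prod_{u\in V}\A_u$ via the map $f$; consequently the Boolean atoms of $\A$ correspond under $f$ to the atoms of $\P$, which in turn are the atoms of the factors $\A_u$. So it suffices to guarantee that each relativisation $\A_u=\{x\in A: x\le \chi_u\}$ is atomic.

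First I would strengthen the amalgamation class used to build $\M$ so that the ternary relation symbols $R\in L$ are \emph{pairwise disjoint} on tuples (each tuple lies in at most one $R$) and so that $\M$ is quantifier-free homogeneous below each formula $\chi_u\wedge R$, in the sense that any two realisations of $\chi_u\wedge R$ satisfy the same quantifier-free $L$-formulas. By the quantifier elimination furnished by the lemma producing $\M$, every element of $\A_u$ is a finite Boolean combination of atomic formulas, so this homogeneity makes each $(\chi_u\wedge R)^{\M}$ an atom of $\A_u$ and forces every nonzero element of $\A_u$ to lie above one of them; thus $\A_u$, and therefore $\A$, is atomic. Crucially this modification keeps $\A_u$ uncountable, since there remain uncountably many relation symbols $R$ and hence uncountably many distinct atoms $\chi_u\wedge R$ below $\chi_u$; so property (iv) of that lemma, together with the entire $\tau_k$ machinery of the previous proof, goes through verbatim. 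In the $\PEA$ and $\Ra$ settings the group law $R_i\circ R_j=R_{i+j}$ on indices is a statement about composition in the $\Ra$ reduct rather than set-theoretic intersection, so it is compatible with the atoms being disjoint ternary relations, and the $\Ra$ reduct is atomic with atoms closed under converse and composition.

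Atomicity of $\B$ then follows from a general Boolean fact: a complete subalgebra of an atomic Boolean algebra is atomic. Indeed, if $\B_v$ is a complete subalgebra of the atomic algebra $\A_v$ and $0<b\in\B_v$, pick an atom $a$ of $\A_v$ with $a\le b$ and set $b'=\bigwedge\{c\in\B_v: a\le c\}$, which exists by completeness; one checks that $b'$ is an atom of $\B_v$ lying below $b$. Since $\Bl\B$ is, by the construction of the previous proof, a complete subalgebra of $\Bl\A$ (only finitely many factors of $\P$ are replaced, each by a complete countable elementary subalgebra $\B_v\le\A_v$), and since products of atomic Boolean algebras are atomic, $\B$ is atomic. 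All remaining assertions of the previous theorem, namely that $\A\in\Ra\CA_\omega$, that $\B$ is a complete, elementarily equivalent (indeed elementary) subalgebra, and that $\tau_k$ witnesses $\B\notin\Ra\CA_k$, are untouched by the modification and are inherited directly.

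The hard part will be the first step: verifying that the enriched amalgamation class (disjoint colours together with quantifier-free homogeneity below each $\chi_u\wedge R$) still enjoys the amalgamation and joint embedding properties, so that a countable homogeneous limit with full quantifier elimination exists, while simultaneously retaining every clause of the lemma on $\M$ needed downstream. Checking amalgamation against the added disjointness constraint, and confirming that the resulting $\A$ remains a \emph{full} neat reduct, so that both the interpretation of $\A$ in $\P$ and the forcing of uncountability by $\tau_k$ survive, is where the real work lies; once that is in place, the passage to atomic $\A$ and $\B$ is routine.
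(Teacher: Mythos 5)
The first thing to note is that the paper offers no proof of this theorem at all: it is stated bare at the end of the subsection, so there is nothing to match your argument against step by step. The surrounding material does signal the intended route --- the preceding lemma constructs, over a finite atom structure, an uncountable family of \emph{pairwise disjoint} relations $C_r$ subject to genericity conditions, and the subsection opens by saying the construction ``provides atomic algebras $\A$ and $\B$''. Your plan --- replace the ternary relations of the Fra\"iss\'e model by pairwise disjoint ones and impose quantifier-free homogeneity below each $\chi_u\wedge R$, so that the sets $(\chi_u\wedge R)^{\M}$ become the atoms of the relativisations $\A_u$ --- is the same idea in spirit. The problem is that the entire mathematical content of the theorem is exactly the step you defer as ``where the real work lies'': exhibiting a model with quantifier elimination, pairwise disjoint relations, the group law needed for the $\Ra$ and polyadic cases, and enough homogeneity below each $\chi_u\wedge R$ to make these sets atoms, while retaining all six clauses of the lemma on $\M$. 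Until that amalgamation class is written down and its amalgamation and joint embedding properties are verified, you have a plan rather than a proof. You should also account for tuples of $\chi_u^{\M}$ lying in none of the (now disjoint) relations: a quantifier-free definable set of the form $\chi_u\wedge\neg R_1\wedge\dots\wedge\neg R_m$ could in principle meet no $(\chi_u\wedge R)^{\M}$ at all, and then atomicity of $\A_u$ fails unless this remainder is also controlled (made empty, or itself homogeneous).

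Two smaller points. Your argument that atomicity passes to $\B$ relies on the infimum $b'=\bigwedge\{c\in\B_v:a\le c\}$ ``existing by completeness''; but $\B_v$ is a complete (i.e.\ regular) \emph{subalgebra}, not a complete Boolean algebra, and being countably infinite it cannot be complete as a Boolean algebra, so that infimum need not exist in $\B_v$. The fact you want is nevertheless true --- a regular subalgebra of an atomic Boolean algebra is atomic, via a maximal-antichain and density argument --- but the quickest repair is to observe that atomicity of a Boolean algebra is a first-order property and is therefore inherited by the elementary subalgebra $\B_v\preceq\A_v$, with no appeal to regularity at all. The remaining bookkeeping (a finite product of atomic Boolean algebras is atomic; the $\tau_k$ cardinality argument is untouched) is fine.
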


\section{Neat reducts and games}

We start by characterizaing the class $\Nr_n\CA_{\omega}$ using games, or rather the atomic algebras in $\Nr_n\CA_{\omega}$ using games.
Therefore, the devised games will be played on atom structures. 
Admittedly, games played on atom structures of neat reduct miss something, for not all neat reducts are atomic, which is not
the case for example with complete representations. But such games can go very deeply into the analysis 
distingushing between various classes that
are intimately related, and hard to distinguish. So we basically use games that  are oriented to constructing counterexamples.

Our treatment in this part 
follows very closely \cite{r}. The essential difference is that in the games devised we deal with $n$ 
dimensional networks (as opposed to $2$ dimensional networks or basic matrices)
and triangle moves are replaced by what we call cylindrifier moves in the games.
Therefore, we will be rather sketch referring to the $\RA$ analogues of our results proved by Hirsch \cite{r}.
We need some prelimenaries.

\begin{definition}\label{def:string} 
Let $n$ be an ordinal. An $s$ word is a finite string of substitutions $({\sf s}_i^j)$, 
a $c$ word is a finite string of cylindrifications $({\sf c}_k)$.
An $sc$ word is a finite string of substitutions and cylindrifications
Any $sc$ word $w$ induces a partial map $\hat{w}:n\to n$
by
\begin{itemize}

\item $\hat{\epsilon}=Id$

\item $\widehat{w_j^i}=\hat{w}\circ [i|j]$

\item $\widehat{w{\sf c}_i}= \hat{w}\upharpoonright(n\sim \{i\}$ 

\end{itemize}
\end{definition}
If $\bar a\in {}^{<n-1}n$, we write ${\sf s}_{\bar a}$, or more frequently 
${\sf s}_{a_0\ldots a_{k-1}}$, where $k=|\bar a|$,
for an an arbitary chosen $sc$ word $w$
such that $\hat{w}=\bar a.$ 
$w$  exists and does not 
depend on $w$ by \cite[definition~5.23 ~lemma 13.29]{HHbook}. 
We can, and will assume \cite[Lemma 13.29]{HHbook} 
that $w=s{\sf c}_{n-1}{\sf c}_n.$
[In the notation of \cite[definition~5.23,~lemma~13.29]{HHbook}, 
$\widehat{s_{ijk}}$ for example is the function $n\to n$ taking $0$ to $i,$
$1$ to $j$ and $2$ to $k$, and fixing all $l\in n\setminus\set{i, j,k}$.]
Let $\delta$ be a map. Then $\delta[i\to d]$ is defined as follows. $\delta[i\to d](x)=\delta(x)$
if $x\neq i$ and $\delta[i\to d](i)=d$. We write $\delta_i^j$ for $\delta[i\to \delta_j]$.

\begin{definition}
From now on let $2\leq n<\omega.$ Let $\C$ be an atomic $\CA_{n}$. 
An \emph{atomic  network} over $\C$ is a map
$$N: {}^{n}\Delta\to \At\cal C$$ 
such that the following hold for each $i,j<n$, $\delta\in {}^{n}\Delta$
and $d\in \Delta$:
\begin{itemize}
\item $N(\delta^i_j)\leq {\sf d}_{ij}$
\item $N(\delta[i\to d])\leq {\sf c}_iN(\delta)$ 
\end{itemize}
\end{definition}
Note than $N$ can be viewed as a hypergraph with set of nodes $\Delta$ and 
each hyperedge in ${}^n\Delta$ is labelled with an atom from $\C$.
We call such hyperedges atomic hyperedges.

For relation algebras an atomic network, is just a basic matrix in the sense of Maddux, 
which is a map from a set of ordered pairs to the atoms of a relation algebra.
What we have defined can be viewed as a hypernetwork or, if you like, a {\it basic tensor}.
We write $\nodes(N)$ for $\Delta.$ But it can happen 
let $N$ stand for the set of nodes 
as well as for the function and the network itself. Context will help.

Define $x\sim y$ if there exists $\bar{z}$ such that $N(x,y,\bar{z})\leq {\sf d}_{01}$.
Define an equivalence relation
$\sim$ over the set of all finite sequences over $\nodes(N)$ by $\bar
x\sim\bar y$ iff $|\bar x|=|\bar y|$ and $x_i\sim y_i$ for all
$i<|\bar x|$.(It can be checked that this indeed an equivalence relation.)

(3) A \emph{ hypernetwork} $N=(N^a, N^h)$ over $\cal C$ 
consists of a network $N^a$
together with a labelling function for hyperlabels $N^h:\;\;^{<
\omega}\!\nodes(N)\to\Lambda$ (some arbitrary set of hyperlabels $\Lambda$)
such that for $\bar x, \bar y\in\; ^{< \omega}\!\nodes(N)$ 
\begin{enumerate}
\renewcommand{\theenumi}{\Roman{enumi}}
\setcounter{enumi}3
\item\label{net:hyper} $\bar x\sim\bar y \Rightarrow N^h(\bar x)=N^h(\bar y)$. 
\end{enumerate}
If $|\bar x|=k\in N$ and $N^h(\bar x)=\lambda$ then we say that $\lambda$ is
a $k$-ary hyperlabel. $(\bar x)$ is referred to a a $k$-ary hyperedge, or simply a hyperedge.
(Note that we have atomic hyperedges and hyperedges, context will help which one we intend.) 
When there is no risk of ambiguity we may drop the superscripts $a,
h$. Th labelling function for hyperlabes, labels sequences of nodes of arbitray lengths by a set of hyperlabels.
The idea, here is that a neat reduct can be viewed as a two sorted structure, the hypernetwork has to do with  the first sort, 
and the hyperlabels adjusts the algebra in higher dimensions in which the first sort, namely, the neat reduct embeds.  

The following notation is defined for hypernetworks, but applies
equally to networks.  

(4) If $N$ is a hypernetwork and $S$ is any set then
$N\restr S$ is the $n$-dimensional hypernetwork defined by restricting
$N$ to the set of nodes $S\cap\nodes(N)$.  For hypernetworks $M, N$ if
there is a set $S$ such that $M=N\restr S$ then we write $M\subseteq
N$.  If $N_0\subseteq N_1\subseteq \ldots $ is a nested sequence of
hypernetworks then we let the \emph{limit} $N=\bigcup_{i<\omega}N_i$  be
the hypernetwork defined by
$\nodes(N)=\bigcup_{i<\omega}\nodes(N_i)$,\/ $N^a(x_0,\ldots x_{n-1})= 
N_i^a(x_0,\ldots x_{n-1})$ if
$x_0\ldots x_{\mu-1}\in\nodes(N_i)$, and $N^h(\bar x)=N_i^h(\bar x)$ if $\rng(\bar
x)\subseteq\nodes(N_i)$.  This is well-defined since the hypernetworks
are nested and since hyperedges $\bar x\in\;^{<\omega}\nodes(N)$ are
only finitely long.

For hypernetworks $M, N$ and any set $S$, we write $M\equiv^SN$
if $N\restr S=M\restr S$.  For hypernetworks $M, N$, 
and any set $S$, we write $M\equiv_SN$ 
if the symmetric difference $\Delta(\nodes(M), \nodes(N))\subseteq S$ and
$M\equiv^{(\nodes(M)\cup\nodes(N))\setminus S}N$. We write $M\equiv_kN$ for
$M\equiv_{\set k}N$.

Let $N$ be a network and let $\theta$ be any function.  The network
$N\theta$ is a complete labelled graph with nodes
$\theta^{-1}(\nodes(N))=\set{x\in\dom(\theta):\theta(x)\in\nodes(N)}$,
and labelling defined by 
$(N\theta)(i_0,\ldots i_{\mu-1}) = N(\theta(i_0), \theta(i_1), \theta(i_{\mu-1}))$,
for $i_0, \ldots i_{\mu-1}\in\theta^{-1}(\nodes(N))$.  Similarly, for a hypernetwork
$N=(N^a, N^h)$, we define $N\theta$ to be the hypernetwork
$(N^a\theta, N^h\theta)$ with hyperlabelling defined by
$N^h\theta(x_0, x_1, \ldots) = N^h(\theta(x_0), \theta(x_1), \ldots)$
for $(x_0, x_1,\ldots) \in \;^{<\omega}\!\theta^{-1}(\nodes(N))$.

Let $M, N$ be hypernetworks.  A \emph{partial isomorphism}
$\theta:M\to N$ is a partial map $\theta:\nodes(M)\to\nodes(N)$ such
that for any $
i_i\ldots i_{\mu-1}\in\dom(\theta)\subseteq\nodes(M)$ we have $M^a(i_1,\ldots i_{\mu-1})= 
N^a(\theta(i), \ldots\theta(i_{\mu-1}))$
and for any finite sequence $\bar x\in\;^{<\omega}\!\dom(\theta)$ we
have $M^h(\bar x) = 
N^h\theta(\bar x)$.  
If $M=N$ we may call $\theta$ a partial isomorphism of $N$.

Hirsch played games only on relation algebra atom structures. We will play games that apply to cylindric algebra for every finite dimension, 
so that in fact we are dealing with infinitely many cases. We are infront of two choices, either explicitly refer to the dimension in our notation
(so that in some cases we will need two 'indices' one for the dimension of the algebra, the other for the number of rounds played on the atom structure
of the algebra),  or else fix it throughout. We choose the latter alternative. To simplify notation, fix $n\geq 3$.
$n$ will only appear as the dimension. It will not appear in the notation of games played; since it will be clear from context.
This simplifies notation 
considerably, and definitely permits better readability

The next definition is crucial. 
\begin{definition}
A hyperedge $\bar{x}\in {}^{<\omega}\nodes (N)$ of length $m$ is {\it short}, if there are $y_0,\ldots y_{n-1}\in \nodes(N)$, such that 
$N(x_j, y_i, \bar{z})\leq d_{01}$, for some  $j<m$, $i<n$, for some (equivalently for all)
$\bar{z}.$ Otherwise, it is called {\it long.}
A hypernetwork is called {\it $\lambda$ neat} if $N(\bar{x})=\lambda$ for all short hyper edges.
\end{definition}

Short hyperedges have to do with the atoms of the small algebra the {\it neat $n$ reduct}, which will actually be the hypernetworks. 
If $\A=\Nr_n\B$, and $\A$ is atomic, then we want the atoms of 
the $n$ neat reduct to be no smaller than the atoms of the big algebra, 
of which they are a neat reduct. This is the role of the $\lambda$ neat hypernetworks, labelling short hyperedges.
This will enable us to prove that a given atomic $n$ dimensional atomic cylindric algebra is the {\it full} neat reduct of an 
$\omega$ dimensional one.

\begin{definition}\label{def:games} Let $2\leq n<\omega$. For any $\CA_{n}$  
atom structure $\alpha$, and $n\leq m\leq
\omega$, we define two-player games $F^m(\alpha),$ \; and 
$H(\alpha)$,
each with $\omega$ rounds, 
and for $m<\omega$ we define $H_{m}(\alpha)$ with $m$ rounds.

\begin{itemize}
\item 
Let $m\leq \omega$. This is a typical $m$ pebble game.  
In a play of $F^m(\alpha)$ the two players construct a sequence of
networks $N_0, N_1,\ldots$ where $\nodes(N_i)$ is a finite subset of
$m=\set{j:j<m}$, for each $i$.  In the initial round of this game \pa\
picks any atom $a\in\alpha$ and \pe\ must play a finite network $N_0$ with
$\nodes(N_0)\subseteq  m$, 
such that $N_0(\bar{d}) = a$ 
for some $\bar{d}\in{}^{n}\nodes(N_0)$.
In a subsequent round of a play of $F^m(\alpha)$ \pa\ can pick a
previously played network $N$ an index $\l<n$, a ``face" 
$F=\langle f_0,\ldots f_{n-2} \rangle \in{}^{n-2}\nodes(N),\; k\in
m\setminus\set{f_0,\ldots f_{n-2}}$, and an atom $b\in\alpha$ such that 
$b\leq {\sf c}_lN(f_0,\ldots f_i, x,\ldots f_{n-2}).$  
(the choice of $x$ here is arbitrary, 
as the second part of the definition of an atomic network together with the fact
that $\cyl i(\cyl i x)=\cyl ix$ ensures that the right hand side does not depend on $x$).
This move is called a \emph{cylindrifier move} and is denoted
$(N, \langle f_0, \ldots f_{\mu-2}\rangle, k, b, l)$ or simply $(N, F,k, b, l)$.
In order to make a legal response, \pe\ must play a
network $M\supseteq N$ such that 
$M(f_0,\ldots f_{i-1}, k, f_i,\ldots f_{n-2}))=b$ 
and $\nodes(M)=\nodes(N)\cup\set k$.

\pe\ wins $F^m(\alpha)$ if she responds with a legal move in each of the
$\omega$ rounds.  If she fails to make a legal response in any
round then \pa\ wins. The more pebbles we have, the easier it is for \pa\ to win.

\item
Fix some hyperlabel $\lambda_0$.  $H(\alpha)$ is  a 
game the play of which consists of a sequence of
$\lambda_0$-neat hypernetworks 
$N_0, N_1,\ldots$ where $\nodes(N_i)$
is a finite subset of $\omega$, for each $i<\omega$, so that short hyperedges are al labelled by $\lambda_0$.  
In the initial round \pa\ picks $a\in\alpha$ and \pe\ must play
a $\lambda_0$-neat hypernetwork $N_0$ with nodes contained in
$\mu$ and $N_0(\bar d)=a$ for some nodes $\bar{d}\in {}^{\mu}N_0$.  
At a later stage
\pa\ can make any cylindrifier move $(N, F,k, b, l)$ by picking a
previously played hypernetwork $N$ and $F\in {}^{n-2}\nodes(N), \;l<n,  
k\in\omega\setminus\nodes(N)$ 
and $b\leq {\sf c}_lN(f_0, f_{l-1}, x, f_{n-2})$.  
[In $H(\alpha)$ we
require that \pa\ chooses $k$ as a `new node', i.e. not in
$\nodes(N)$, whereas in $F^m$ for finite $m$ it was necessary to allow
\pa\ to `reuse old nodes'. This makes the game easier as far as he is concerned.) 
For a legal response, \pe\ must play a
$\lambda_0$-neat hypernetwork $M\equiv_k N$ where
$\nodes(M)=\nodes(N)\cup\set k$ and 
$M(f_0, f_{i-1}, k, f_{n-2})=b$.
Alternatively, \pa\ can play a \emph{transformation move} by picking a
previously played hypernetwork $N$ and a partial, finite surjection
$\theta:\omega\to\nodes(N)$, this move is denoted $(N, \theta)$.  \pe\
must respond with $N\theta$.  Finally, \pa\ can play an
\emph{amalgamation move} by picking previously played hypernetworks
$M, N$ such that $M\equiv^{\nodes(M)\cap\nodes(N)}N$ and
$\nodes(M)\cap\nodes(N)\neq \emptyset$.  
This move is denoted $(M,
N)$.  To make a legal response, \pe\ must play a $\lambda_0$-neat
hypernetwork $L$ extending $M$ and $N$, where
$\nodes(L)=\nodes(M)\cup\nodes(N)$.

Again, \pe\ wins $H(\alpha)$ if she responds legally in each of the
$\omega$ rounds, otherwise \pa\ wins. 

\item For $m< \omega$ the game $H_{m}(\alpha)$ is similar to $H(\alpha)$ but
play ends after $m$ rounds, so a play of $H_{m}(\alpha)$ could be
\[N_0, N_1, \ldots, N_m\]
If \pe\ responds legally in each of these
$m$ rounds she wins, otherwise \pa\ wins.
\end{itemize}

\end{definition}

\begin{definition}\label{def:hat}
For $m\geq 5$ and $\C\in\CA_m$, if $\A\subseteq\Nr_n(\C)$ is an
atomic cylindric algebra and $N$ is an $\A$-network then we define
$\widehat N\in\C$ by
\[\widehat N =
 \prod_{i_0,\ldots i_{n-1}\in\nodes(N)}{\sf s}_{i_0, \ldots i_{n-1}}N(i_0\ldots i_{n-1})\]
$\widehat N\in\C$ depends
implicitly on $\C$.
\end{definition}
We write $\A\subseteq_c \B$ if $\A\in S_c\{\B\}$. 
\begin{lemma}\label{lem:atoms2}
Let $n<m$ and let $\A$ be an atomic $\CA_n$, 
$\A\subseteq_c\Nr_n\C$
for some $\C\in\CA_m$.  For all $x\in\C\setminus\set0$ and all $i_0, \ldots i_{n-1} < m$ there is $a\in\At(\A)$ such that
${\sf s}_{i_0\ldots i_{n-1}}a\;.\; x\neq 0$.
\end{lemma}
\begin{proof}
We can assume, see definition  \ref{def:string}, 
that ${\sf s}_{i_0,\ldots i_{n-1}}$ consists only of substitutions, since ${\sf c}_{m}\ldots {\sf c}_{m-1}\ldots 
{\sf c}_nx=x$ 
for every $x\in \A$.We have ${\sf s}^i_j$ is a
completely additive operator (any $i, j$), hence ${\sf s}_{i_0,\ldots i_{\mu-1}}$ 
is too  (see definition~\ref{def:string}).
So $\sum\set{{\sf s}_{i_0\ldots i_{n-1}}a:a\in\At(\A)}={\sf s}_{i_0\ldots i_{n-1}}
\sum\At(\A)={\sf s}_{i_0\ldots i_{n-1}}1=1$,
for any $i_0,\ldots i_{n-1}<n$.  Let $x\in\C\setminus\set0$.  It is impossible
that ${\sf s}_{i_0\ldots i_{n-1}}\;.\;x=0$ for all $a\in\At(\A)$ because this would
imply that $1-x$ was an upper bound for $\set{{\sf s}_{i_0\ldots i_{n-1}}a:
a\in\At(\A)}$, contradicting $\sum\set{{\sf s}_{i_0\ldots i_{n-1}}a :a\in\At(\A)}=1$.
\end{proof}

We now prove two theorems relating neat embeddings
to the games we defined:

\begin{theorem}\label{thm:n}
Let $n<m$, and let $\A$ be an atomic $\CA_m$
If $\A\in{\bf S_c}\Nr_{n}\CA_m, $
then \pe\ has a \ws\ in $F^m(\At\A)$. In particular, if $\A$ is countable and completely representable, then \pe has a \ws in $F^{\omega}(\At\A)$
\end{theorem}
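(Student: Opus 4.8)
The plan is to have \pe\ play so as to maintain, as an inductive invariant, that the hatted network $\widehat{N}\in\C$ of Definition~\ref{def:hat} is nonzero, where $\C\in\CA_m$ witnesses $\A\subseteq_c\Nr_n\C$. Throughout, nodes are drawn from $m$, so that all the substitutions ${\sf s}_{i_0\ldots i_{n-1}}$ and the cylindrifiers ${\sf c}_k$ needed below genuinely live in $\C$. The two facts I will lean on are: (i) that each ${\sf s}_{\bar\imath}$ is completely additive in $\C$ and $\sum^{\C}\At\A=1$ (since $\A$ is a \emph{complete} subalgebra of $\Nr_n\C$), which is exactly the content of Lemma~\ref{lem:atoms2}; and (ii) the cylindric identity ${\sf c}_k(z\cdot w)={\sf c}_kz\cdot{\sf c}_kw$ valid whenever $z={\sf c}_kz$.

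For the initial round, when \pa\ plays $a\in\At\A$, \pe\ takes a network $N_0$ with $\nodes(N_0)=\{0,\ldots,n-1\}$ and $N_0(0,\ldots,n-1)=a$. Starting from ${\sf s}_{0\ldots n-1}a=a\neq0$, she labels the remaining tuples $\bar\imath\in{}^n\nodes(N_0)$ one at a time, at each step using (i) to choose an atom $N_0(\bar\imath)\in\At\A$ that keeps the partial product of the factors ${\sf s}_{\bar\imath}N_0(\bar\imath)$ nonzero; this is possible because $\sum_{b\in\At\A}{\sf s}_{\bar\imath}b={\sf s}_{\bar\imath}1=1$. Nonvanishing of the final product $\widehat{N_0}$ then guarantees the atomic-network conditions $N_0(\delta^i_j)\leq{\sf d}_{ij}$ and $N_0(\delta[i\to d])\leq{\sf c}_iN_0(\delta)$, since a violation of either would render two of the factors incompatible and kill the product; so $N_0$ is a genuine atomic network.

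The crux is the cylindrifier move $(N,F,k,b,l)$, with $F=\langle f_0,\ldots,f_{n-2}\rangle$, $k$ fresh (or, in the finite game, reused) and $b\leq{\sf c}_l c$, where $c=N(\mathrm{source})$ is the label of the source edge carrying the node $x$ in position $l$. Let $\sigma$ be the required new edge, i.e. $\mathrm{source}$ with $x$ replaced by $k$; \pe\ must achieve $M(\sigma)=b$. When $k\notin\nodes(N)$ the element $\widehat{N}$ is ${\sf c}_k$-closed, so ${\sf c}_k\widehat{N}=\widehat{N}$. The key estimate is $\widehat{N}\leq{\sf c}_k({\sf s}_{\sigma}b)$: it follows from $\widehat{N}\leq{\sf s}_{\mathrm{source}}c$ (a factor of the defining product), from $b\leq{\sf c}_lc$, and from the substitution identities relabelling the position-$l$ node from $x$ to the fresh node $k$. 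Granting this, identity (ii) with $z=\widehat{N}$ and $w={\sf s}_{\sigma}b$ gives ${\sf c}_k(\widehat{N}\cdot{\sf s}_{\sigma}b)={\sf c}_k\widehat{N}\cdot{\sf c}_k({\sf s}_{\sigma}b)=\widehat{N}\cdot{\sf c}_k({\sf s}_{\sigma}b)=\widehat{N}\neq0$, whence $\widehat{N}\cdot{\sf s}_{\sigma}b\neq0$. \pe\ now sets $M(\sigma)=b$ and, exactly as in the initial round, labels the finitely many remaining new edges through $k$ one at a time via (i), keeping the running product nonzero; the result is a network $M\supseteq N$ with $\nodes(M)=\nodes(N)\cup\{k\}$, $M(\sigma)=b$, and $\widehat{M}\neq0$. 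I expect this relabelling estimate $\widehat{N}\leq{\sf c}_k({\sf s}_{\sigma}b)$ --- the exact bookkeeping of the interaction of ${\sf s}_{\mathrm{source}}$, ${\sf c}_l$ and the fresh-node substitution --- to be the main obstacle; everything else is the uniform additivity argument of (i). Iterating over the $\omega$ rounds yields a \ws\ for \pe\ in $F^m(\At\A)$.

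Finally, for the parenthetical assertion: a countable atomic completely representable $\A$ satisfies $\A\subseteq_c\Nr_n\CA_{\omega}$, by the equivalence of complete representability with complete neat embeddings for countable atomic algebras cited in the introduction (\cite{Sayed}). Applying the construction with $m=\omega$ --- where \pa\ never needs to reuse a node, so $k$ is always fresh and the computation above is unencumbered --- gives a \ws\ for \pe\ in $F^{\omega}(\At\A)$.
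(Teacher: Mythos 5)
Your proposal is correct and takes essentially the same route as the paper's own proof: \pe\ maintains the invariant $\widehat{N}\neq 0$, uses the complete additivity of the substitutions (Lemma \ref{lem:atoms2}, which is exactly where the hypothesis that the embedding is \emph{complete} enters) to label each new edge by an atom keeping the product nonzero, reduces the cylindrifier move to ${\sf c}_k\widehat{N}\cdot{\sf s}_{\bar a}b\neq 0$, and derives the parenthetical assertion from $\A\in S_c\Nr_n\CA_{\omega}$. Your write-up is in fact more detailed than the paper's at the cylindrifier step and in verifying that the labelled object is a genuine atomic network, but the underlying argument is identical.
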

\begin{proof}
For the first part, if $\A\subseteq\Nr_n\C$ for some $\C\in\CA_m$ then \pe\ always
plays hypernetworks $N$ with $\nodes(N)\subseteq n$ such that
$\widehat N\neq 0$. In more detail, in the initial round , let $\forall$ play $a\in \At \cal A$.
$\exists$ play a network $N$ with $N(0, \ldots n-1)=a$. Then $\widehat N=a\neq 0$.
At a later stage suppose $\forall$ plays the cylindrifier move 
$(N, \langle f_0, \ldots f_{\mu-2}\rangle, k, b, l)$ 
by picking a
previously played hypernetwork $N$ and $f_i\in \nodes(N), \;l<\mu,  k\notin \{f_i: i<n-2\}$, 
and $b\leq {\sf c}_lN(f_0,\ldots  f_{i-1}, x, f_{n-2})$.
Let $\bar a=\langle f_0\ldots f_{l-1}, k\ldots f_{n-2}\rangle.$
Then ${\sf c}_k\widehat N\cdot {\sf s}_{\bar a}b\neq 0$.
Then there is a network  $M$ such that
$\widehat{M}.\widehat{{\sf c}_kN}\cdot {\sf s}_{\bar a}b\neq 0$. Hence 
$M(f_0,\dots  k, f_{n-2})=b.$

For the second part, we have from the first part, that $\A\in S_c\Nr_n\CA_{\omega}$, the result now follows. 
\end{proof}

\begin{theorem}\label{thm:RaC}
Let $\alpha$ be a countable 
$\CA_n$ atom structure.  If \pe\ has a \ws\ in the infinite game $H(\alpha),$ then
there is a representable cylindric algebra $\C$ of
dimension $\omega$ such that $\Nr_n\C$ is atomic 
and $\At \Nr_n\C\cong\alpha$; in other words $\alpha$ is a neat atom structure.
\end{theorem}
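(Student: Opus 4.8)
The plan is to convert \pe's \ws\ in the $\omega$-round game $H(\alpha)$ into an $\omega$-dimensional representation, dualising Theorem~\ref{thm:n}.

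\textbf{Building a saturated limit hypernetwork.} First I would fix a \ws\ $\sigma$ for \pe\ in $H(\alpha)$ and stage a single play of length $\omega$ in which \pa\ is scheduled to eventually make \emph{every} legal cylindrifier, transformation and amalgamation move. The three kinds of demand (for each finite $\lambda_0$-neat hypernetwork occurring, each face $F\in{}^{n-2}\nodes(N)$, each $l<n$ and each atom $b\leq\cyl{l}N(f_0,\dots,f_{n-2})$; each finite surjection $\theta:\omega\to\nodes(N)$; each compatible pair $M,N$) form a countable set, so a standard dovetailing/back-and-forth bookkeeping lets \pa\ present them all cofinally often. As \pe\ plays $\sigma$ she responds legally in every round, producing a nested chain $N_0\subseteq N_1\subseteq\cdots$ of $\lambda_0$-neat hypernetworks with each $\nodes(N_i)$ a finite subset of $\omega$. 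Taking the limit $M=\bigcup_{i<\omega}N_i$ gives $\nodes(M)=\omega$, with $M$ again $\lambda_0$-neat. The scheduling guarantees that $M$ is \emph{saturated}: for every $F\in{}^{n-2}\nodes(M)$, every $l<n$ and every atom $b\leq\cyl{l}M(f_0,\dots,f_{n-2})$ there is $k\in\omega$ with $M(f_0,\dots,k,\dots,f_{n-2})=b$; $M$ is closed under the relevant transformations and amalgams; and short hyperedges all carry the label $\lambda_0$.

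\textbf{Reading off $\C$.} For each atom $a\in\alpha$, running the scheduling with \pa's opening move $a$ yields a saturated $\lambda_0$-neat hypernetwork $M_a$ on nodes $\omega$ realising $a$. I would then let $\C$ be the cylindric set algebra of dimension $\omega$ whose base is the disjoint union $U=\bigsqcup_{a\in\alpha}\nodes(M_a)$, generated by the sets $[b]=\{s\in{}^{\omega}U:(\exists a)\, s\in{}^{\omega}\nodes(M_a)\text{ and } M_a(s_0,\dots,s_{n-1})=b\}$ for $b\in\alpha$, the hyperlabels supplying consistent values for the substitutions and cylindrifications in the dimensions $\geq n$ (this is where the two-sorted reading of a neat reduct is used). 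Being an honest set algebra, $\C\in\Cs_{\omega}\subseteq\RCA_{\omega}$. Cylindrifier-saturation makes the generating set closed under the $\cyl{i}$, while the $\diag{ij}$-clauses in the definition of an atomic network make the $[b]$ pairwise disjoint and correctly placed below the diagonals, so that $b\mapsto[b]$ embeds $\alpha$ into $\At\Nr_n\C$, hitting every atom via the opening moves.

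\textbf{The main obstacle.} The crux---and the only place where $\lambda_0$-neatness really bites---is proving $\At\Nr_n\C\cong\alpha$ \emph{onto}: that each $[b]$ is an atom of $\Nr_n\C$ and that there are no further atoms. One must rule out an element of $\Nr_n\C$ lying strictly between $0$ and $[b]$; if one existed, two $\omega$-tuples agreeing on their first $n$ coordinates (hence on the $\alpha$-label $b$) would be separated by a higher-dimensional witness, but the requirement that all short hyperedges receive the \emph{same} hyperlabel $\lambda_0$ forbids exactly this, forcing the extra dimensions to respect the $n$-dimensional atom. Making this splitting argument precise, together with checking that the $\widehat N$-style consistency of Definition~\ref{def:hat} survives in the limit so that $M$ genuinely determines a $\CA_\omega$ and that $\C$ is closed under all $\omega$ cylindrifications, is where the real work lies; the representability and atomicity of $\Nr_n\C$ then follow formally.
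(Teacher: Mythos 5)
Your first stage --- dovetailing all cylindrifier, transformation and amalgamation moves against \pe's \ws\ so as to obtain, for each $a\in\alpha$, a saturated $\lambda_0$-neat limit hypernetwork $N_a$ --- is exactly how the paper begins. The divergence, and the gap, lies in what is done with $N_a$. The paper does not take anything like a full cylindric set algebra on ${}^{\omega}\nodes(N_a)$: for each $a$ it forms a \emph{weak} set algebra $\mathcal{D}_a$ whose unit is $U_a=\{f\in{}^{\omega}\nodes(N_a): f \text{ agrees cofinitely with a fixed } f_a\}$ and whose elements are precisely the sets $\phi^{\mathcal{N}_a}$ definable by first-order formulas in the signature having an $n$-ary predicate for each atom and a $k$-ary predicate for each $k$-ary hyperlabel; then $\C=\prod_{a\in\alpha}\mathcal{D}_a$. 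Both restrictions are load-bearing in the step you yourself flag as ``the main obstacle.''

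The mechanism that prevents an element of $\Nr_n\C$ from splitting $[b]$ is not, as you suggest, the $\lambda_0$-labelling of short hyperedges. It is the \emph{homogeneity} of the limit: because the transformation and amalgamation moves were scheduled, every finite partial isomorphism of $N_a$ extends to one whose range contains any prescribed finite set of nodes, and an induction on quantifier depth then shows that two assignments in $U_a$ related by a partial isomorphism satisfy the same $L$-formulas. Given a nonzero $x\in\Nr_n\C$ with nonzero component $\phi^{\mathcal{D}_a}$, one picks $f\in\phi^{\mathcal{D}_a}$, sets $b=N_a(f(0),\ldots,f(n-1))$, and for any $g$ with label $b$ extends the map $f(i)\mapsto g(i)$ $(i<n)$ to a partial isomorphism covering all the variables of $\phi$; the modified assignment $g'$ agrees with $g$ on the first $n$ coordinates and differs from it on only finitely many others, so invariance of $x$ under the cylindrifications ${\sf c}_i$, $i\geq n$, gives $g\in\phi^{\mathcal{D}_a}$, whence $b(x_0,\ldots,x_{n-1})^{\mathcal{D}_a}\subseteq\phi^{\mathcal{D}_a}$. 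This is where the weak unit is indispensable (any two assignments in $U_a$ differ in only finitely many places) and where first-order definability of the elements enters (to run the induction). With full Cartesian units, with the homogeneity property of $N_a$ never stated or used, and with the non-splitting attributed to $\lambda_0$-neatness --- whose actual role is only to keep short hyperedges from obstructing the extension of partial isomorphisms --- your sketch cannot close the step that establishes $\At\Nr_n\C\cong\alpha$.
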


\begin{proof} 
We shall construct a generalized atomic weak set algebra of dimension $\omega$ such that the atom 
structure of its full $n$ neat reduct is isomorphic to
the given atom structure. 

Suppose \pe\ has a \ws\ in $H(\alpha)$. Fix some $a\in\alpha$. We can define a
nested sequence $N_0\subseteq N_1\ldots$ of neat hypernetworks
where $N_0$ is \pe's response to the initial \pa-move $a$, requiring that
\begin{enumerate}
\item If $N_r$ is in the sequence and 
and $b\leq {\sf c}_lN_r(\langle f_0, f_{n-2}\rangle\ldots , x, f_{n-2})$.  
then there is $s\geq r$ and $d\in\nodes(N_s)$ such 
that $N_s(f_0, f_{i-1}, d, f_{n-2})=b$.
\item If $N_r$ is in the sequence and $\theta$ is any partial
isomorphism of $N_r$ then there is $s\geq r$ and a
partial isomorphism $\theta^+$ of $N_s$ extending $\theta$ such that
$\rng(\theta^+)\supseteq\nodes(N_r)$.
\end{enumerate}
We can schedule these requirements
to extend. To find $k$ and $N_{r+1}\supset N_r$
such that 
$N_{r+1}(f_0, k, f_{n-2})=b$ then let $k\in \omega\setminus \nodes(N_r)$
where $k$ is the least possible,
and let $N_{r+1}$ be \pe's response using her \ws, 
to the \pa move $N_r, (f_0,\ldots f_{n-1}), k, b, l).$

For an extension of the other type, let $\tau$ be a partial isomorphism of $N_r$
and let $\theta$ be any finite surjection onto a partial isomorphism of $N_r$ such that 
$dom(\theta)\cap nodes(N_r)= dom\tau$. \pe's response to \pa's move $(N_r, \theta)$ is necessarily 
$N\theta.$ Let $N_{r+1}$ be her response, using her winning strategy, to the subsequent \pa 
move $(N_r, N_r\theta).$ 

Now let $N_a$ be the limit of this sequence.
This limit is well-defined since the hypernetworks are nested.  
We shall show that $N_a$ is the base of a weak set algebra having unit  $^{\omega}N_a^{(p)}$,
for some fixed sequence $p\in {}^{\omega}N$, for which there exists a homomorphism $h$ from $\A\to \wp(N_a)$
such that $h(a)\neq 0$.

Let $\theta$ be any finite partial isomorphism of $N_a$ and let $X$ be
any finite subset of $\nodes(N_a)$.  Since $\theta, X$ are finite, there is
$i<\omega$ such that $\nodes(N_i)\supseteq X\cup\dom(\theta)$. There
is a bijection $\theta^+\supseteq\theta$ onto $\nodes(N_i)$ and $j\geq
i$ such that $N_j\supseteq N_i, N_i\theta^+$.  Then $\theta^+$ is a
partial isomorphism of $N_j$ and $\rng(\theta^+)=\nodes(N_i)\supseteq
X$.  Hence, if $\theta$ is any finite partial isomorphism of $N_a$ and
$X$ is any finite subset of $\nodes(N_a)$ then
\begin{equation}\label{eq:theta}
\exists \mbox{ a partial isomorphism $\theta^+\supseteq \theta$ of $N_a$
 where $\rng(\theta^+)\supseteq X$}
\end{equation}
and by considering its inverse we can extend a partial isomorphism so
as to include an arbitrary finite subset of $\nodes(N_a)$ within its
domain.
Let $L$ be the signature with one $n$ -ary predicate symbol ($b$) for
each $b\in\alpha$, and one $k$-ary predicate symbol ($\lambda$) for
each $k$-ary hyperlabel $\lambda$. We are working in usual first order logic.
Here we have a sequence of variables of order type $\omega$, the $n$ predicate symbols uses only $n$ variables, and roughly
the $n$ variable formulas built up out of the first $n$ variables will determine the neat reduct, the $k$ ary predicate symbols
wil determine algebras of higher dimensions as $k$ gets larger. 
This process will be interpreted in an infinite weak set algebra with base $N_a$, whose elements are 
those  assignments satisfying such formulas. 

 
For fixed $f_a\in\;^\omega\!\nodes(N_a)$, let
$U_a=\set{f\in\;^\omega\!\nodes(N_a):\set{i<\omega:g(i)\neq
f_a(i)}\mbox{ is finite}}$.
Notice that $U_a$ is weak unit (a set of sequences agreeing cofinitely with a fixed one.)


We can make $U_a$ into the universe an $L$ relativized structure $\c N_a$; 
here relativized means that we are only taking those assignments agreeing cofinitely with $f_a$,
we are not taking the standard square model.
However, satisfiability  for $L$ formulas at assignments $f\in U_a$ is defined the usual Tarskian way, except
that we use the modal notation, with assignments on the left:
For $b\in\alpha,\;
l_0, \ldots l_{n-1}, i_0 \ldots, i_{k-1}<\omega$, \/ $k$-ary hyperlabels $\lambda$,
and all $L$-formulas $\phi, \psi$, let
\begin{eqnarray*}
\c N_a, f\models b(x_{l_0}\ldots  x_{l_{n-1}})&\iff&N_a(f(l_0),\ldots  f(l_{n-1}))=b\\
\c N_a, f\models\lambda(x_{i_0}, \ldots,x_{i_{k-1}})&\iff&  N_a(f(i_0), \ldots,f(i_{k-1}))=\lambda\\
\c N_a, f\models\neg\phi&\iff&\c N_a, f\not\models\phi\\
\c N_a, f\models (\phi\vee\psi)&\iff&\c N_a,  f\models\phi\mbox{ or }\c N_a, f\models\psi\\
\c N_a, f\models\exists x_i\phi&\iff& \c N_a, f[i/m]\models\phi, \mbox{ some }m\in\nodes(N_a)
\end{eqnarray*}
For any $L$-formula $\phi$, write $\phi^{\c N_a}$ for the set of all $n$ ary assignments satisfying it; that is
$\set{f\in\;^\omega\!\nodes(N_a): \c N_a, f\models\phi}$.  Let
$D_a = \set{\phi^{\c N_a}:\phi\mbox{ is an $L$-formula}}.$ 
Then this is the universe of the following weak set algebra 
\[\c D_a=(D_a,  \cup, \sim, {\sf D}_{ij}, {\sf C}_i)_{ i, j<\omega}\] 
then  $\c D_a\in\RCA_\omega$. (Weak set algebras are representable).

Let $\phi(x_{i_0}, x_{i_1}, \ldots, x_{i_k})$ be an arbitrary
$L$-formula using only variables belonging to $\set{x_{i_0}, \ldots,
x_{i_k}}$.  Let $f, g\in U_a$ (some $a\in \alpha$) and suppose that $\{(f(i_j), g(i_j): j\leq k\}$
is a partial isomorphism of $N_a$, then one can easily prove by induction over the
quantifier depth of $\phi$ and using (\ref{eq:theta}), that
\begin{equation}
\c N_a, f\models\phi\iff \c N_a,
g\models\phi\label{eq:bf}\end{equation} 

Let $\C=\prod_{a\in \alpha} \c D_a$.  Then  $\C\in\RCA_\omega$, and $\C$ is the desired generalized weak set algebra.
Note that unit of $\C$ is the disjoint union of the weak spaces.
We set out to prove our claim. 
We shall show that $\alpha\cong \At\Nr_n\C.$

This is exactly like the corresponding proof for relation algebras; we include it for the sake of completenes and for the readers convenience.
An element $x$ of $\C$ has the form
$(x_a:a\in\alpha)$, where $x_a\in\c D_a$.  For $b\in\alpha$ let
$\pi_b:\C\to \c D_b$ be the projection defined by
$\pi_b(x_a:a\in\alpha) = x_b$.  Conversely, let $\iota_a:\c D_a\to \c
C$ be the embedding defined by $\iota_a(y)=(x_b:b\in\alpha)$, where
$x_a=y$ and $x_b=0$ for $b\neq a$.  Evidently $\pi_b(\iota_b(y))=y$
for $y\in\c D_b$ and $\pi_b(\iota_a(y))=0$ if $a\neq b$.

Suppose $x\in\Nr_n\C\setminus\set0$.  Since $x\neq 0$, 
it must have a non-zero component  $\pi_a(x)\in\c D_a$, for some $a\in \alpha$.  
Say $\emptyset\neq\phi(x_{i_0}, \ldots, x_{i_k})^{\c
 D_a}= \pi_a(x)$ for some $L$-formula $\phi(x_{i_0},\ldots, x_{i_k})$.  We
 have $\phi(x_{i_0},\ldots, x_{i_k})^{\c D_a}\in\Nr_{\mu}\c D_a)$.  Pick
 $f\in \phi(x_{i_0},\ldots, x_{i_k})^{\c D_a}$ and let $b=N_a(f(0),
 f(1), \ldots f(n-1))\in\alpha$.  We will show that 
$b(x_0, x_1, \ldots x_{n-1})^{\c D_a}\subseteq
 \phi(x_{i_0},\ldots, x_{i_k})^{\c D_a}$.  Take any $g\in
b(x_0, x_1\ldots x_{n-1})^{\c D_a}$, 
so $N_a(g(0), g(1)\ldots g(n-1))=b$.  The map $\set{(f(0),
g(0)), (f(1), g(1))\ldots (f(n-1), g(n-1))}$ is 
a partial isomorphism of $N_a$.  By
 (\ref{eq:theta}) this extends to a finite partial isomorphism
 $\theta$ of $N_a$ whose domain includes $f(i_0), \ldots, f(i_k)$. Let
 $g'\in U_a$ be defined by
\[ g'(i) =\left\{\begin{array}{ll}\theta(i)&\mbox{if }i\in\dom(\theta)\\
g(i)&\mbox{otherwise}\end{array}\right.\] By (\ref{eq:bf}), $\c N_a,
g'\models\phi(x_{i_0}, \ldots, x_{i_k})$. Observe that
$g'(0)=\theta(0)=g(0)$ and similarly $g'(n-1)=g(n-1)$, so $g$ is identical
to $g'$ over $\mu$ and it differs from $g'$ on only a finite
set of coordinates.  Since $\phi(x_{i_0}, \ldots, x_{i_k})^{\c
\ D_a}\in\Nr_{n}(\C)$ we deduce $\c N_a, g \models \phi(x_{i_0}, \ldots,
x_{i_k})$, so $g\in\phi(x_{i_0}, \ldots, x_{i_k})^{\c D_a}$.  This
proves that $b(x_0, x_1\ldots x_{n-1})^{\c D_a}\subseteq\phi(x_{i_0},\ldots,
x_{i_k})^{\c D_a}=\pi_a(x)$, and so 
$$\iota_a(b(x_0, x_1,\ldots x_{n-1})^{\c \ D_a})\leq
\iota_a(\phi(x_{i_0},\ldots, x_{i_k})^{\c D_a})\leq x\in\c
C\setminus\set0.$$  Hence every non-zero element $x$ of $\Nr_{n}\C$ 
is above
a an atom $\iota_a(b(x_0, x_1\ldots n_1)^{\c D_a})$ (some $a, b\in
\alpha$) of $\Nr_{n}\C$.  So
$\Nr_{n}\C$ is atomic and $\alpha\cong\At\Nr_{n}\C$ --- the isomorphism
is $b \mapsto (b(x_0, x_1,\dots x_{n-1})^{\c D_a}:a\in A)$.
\end{proof}

\section{The Rainbow construction, non elementary classes}

We can use such games to show that for $n\geq 3$, there is a representable $\A\in \CA_n$ 
with atom structure $\alpha$ such that $\forall$ can win the game $F^{n+2}(\alpha)$.
However, \pe\ has a \ws\ in $H_k(\alpha)$, for any $k<\omega$.
It will follow that there a cylindric algebra $\A'$ such that $\A'\equiv\A$ and \pe\ has a \ws\ in $H(\A')$.
So let $K$ be any class such that $\Nr_n\CA_{\omega}\subseteq K\subseteq S_c\Nr_n\CA_{n+2}$.
$\A'$ must belong to $\Nr_n(\RCA_\omega)=\Nr_n\CA_{\omega}$, hence $\A'\in K$.  But $\A\not\in K$
and $\A\preceq\A'$. Thus $K$ is not elementary. 

From this it easily follows that the class of completely representable cylindric algebras
is not elementary, and that the class $\Nr_n\CA_{n+k}$ for any $k\geq 0$ is not elementary either. 
Furthermore, the constructions works for many variants of cylindric algebras
like Halmos' polyadic equality algebras and Pinter's substitution algebras.
In fact, we shall prove:
\begin{theorem}\label{r} Let $3\leq n<\omega$. Then the following hold:
\begin{enumroman}
\item Any $K$ such that $\Nr_n\CA_{\omega}\subseteq K\subseteq S_c\Nr_n\CA_{n+2}$ is not elementary.
\item The inclusions $\Nr_n\CA_{\omega}\subseteq S_c\Nr_n\CA_{\omega}\subseteq S\Nr_n\CA_{\omega}$ are all proper
\end{enumroman}
\end{theorem}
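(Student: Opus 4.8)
The plan is to run the K\"onig's lemma scheme advertised above, letting the two games of Definition~\ref{def:games} do the separating work on a single rainbow atom structure: $F^{n+2}$ detects failure of membership in $S_c\Nr_n\CA_{n+2}$ via Theorem~\ref{thm:n}, while $H$ detects membership in $\Nr_n\CA_\omega$ via Theorem~\ref{thm:RaC}. Concretely, I would first fix a rainbow $\CA_n$ atom structure $\alpha$ --- greens, reds and the auxiliary shades of white, arranged as in the coloured-graph version of the Hirsch--Hodkinson construction recalled in the introduction --- designed so that (a) \pa\ has a \ws\ in $F^{n+2}(\alpha)$, and (b) \pe\ has a \ws\ in $H_k(\alpha)$ for every finite $k$. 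Take $\A=\Tm\alpha$, the term algebra, which is weakly representable, so $\A\in\RCA_n$ and $\At\A=\alpha$. Since \pa\ wins $F^{n+2}(\alpha)$, \pe\ cannot also have a \ws\ there, so Theorem~\ref{thm:n} read contrapositively gives $\A\notin S_c\Nr_n\CA_{n+2}$, whence $\A\notin K$ for \emph{every} class $K\subseteq S_c\Nr_n\CA_{n+2}$.

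Next I would upgrade (b) to the infinite game. As the atom structure is first order interpretable in the algebra and, for each fixed finite $k$, ``\pe\ has a \ws\ in $H_k$'' is expressed by a single first order sentence $\sigma_k$, we have $\A\models\sigma_k$ for all $k$. Passing to a countable $\omega$-saturated $\A'\equiv\A$ (say, a countable elementary substructure of a non-principal ultrapower of $\A$), \pe\ wins every $H_k(\At\A')$, and countable saturation amalgamates these finite-length strategies into a single \ws\ for the $\omega$-round game $H(\At\A')$. By Theorem~\ref{thm:RaC}, and using that the short-hyperedge / $\lambda_0$-neat constraint built into $H$ forces the resulting neat reduct to be \emph{full}, this places $\A'$ in $\Nr_n(\RCA_\omega)=\Nr_n\CA_\omega$, so $\A'\in K$ for every $K\supseteq\Nr_n\CA_\omega$. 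Since $\A\equiv\A'$ while $\A\notin K$ and $\A'\in K$, no such $K$ is closed under elementary equivalence, which proves the first item.

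For the second item the inclusions are immediate, so only properness matters, and both halves follow from material in hand. For $S_c\Nr_n\CA_\omega\subsetneq S\Nr_n\CA_\omega$ I would reuse $\A=\Tm\alpha$: it lies in $\RCA_n=S\Nr_n\CA_\omega$, yet $\A\notin S_c\Nr_n\CA_{n+2}\supseteq S_c\Nr_n\CA_\omega$. For $\Nr_n\CA_\omega\subsetneq S_c\Nr_n\CA_\omega$ I would invoke the $\CA$ form of the \S1 construction (as in \cite{IGPL}): an $\A_0\in\Nr_n\CA_\omega$ carrying a complete subalgebra $\B_0\equiv\A_0$ with $\B_0\notin\Nr_n\CA_{n+1}$. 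Then $\B_0\in S_c\{\A_0\}\subseteq S_c\Nr_n\CA_\omega$ while $\B_0\notin\Nr_n\CA_\omega$, which is exactly the required separation.

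The hard part throughout is the existence of the single $\alpha$ meeting (a) and (b). Property (a) demands a \ws\ for \pa\ that, with only the $n+2$ permitted pebbles together with repeated cylindrifier moves, corners \pe\ into a red configuration the rainbow palette cannot consistently label; property (b) demands, for each finite $k$, a back-and-forth \ws\ for \pe\ that survives the cylindrifier, transformation and amalgamation moves while keeping every short hyperedge labelled $\lambda_0$. Reconciling ``\pa\ wins with so few pebbles'' against ``\pe\ wins every finite $H_k$'' is the genuine balancing act of the rainbow construction; checking that the saturation step faithfully transports \pe's finite-length strategies is the one remaining, but routine, technical point.
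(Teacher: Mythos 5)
Your proposal follows essentially the same route as the paper: fix a single rainbow atom structure $\alpha$ on which \pa\ wins $F^{n+2}$ (so the algebra is outside $S_c\Nr_n\CA_{n+2}$ by the contrapositive of Theorem~\ref{thm:n}) while \pe\ wins every finite $H_k$ (so some countable $\A'\equiv\A$ lies in $\Nr_n(\RCA_\omega)=\Nr_n\CA_\omega$ by Theorem~\ref{thm:RaC}), and settle item (ii) from the \S1 construction together with the failure of complete representability; your variant for the second properness in (ii), reusing the rainbow algebra via $S_c\Nr_n\CA_\omega\subseteq S_c\Nr_n\CA_{n+2}$, is an inessential and perfectly sound substitute for the paper's appeal to a countable atomic non-completely-representable algebra.

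The one substantive caveat is that what you defer as ``the genuine balancing act'' --- producing one $\alpha$ satisfying both (a) and (b) --- is exactly where the paper spends the whole of its proof. There the atoms are $\sim$-classes of surjections from $n$ onto coloured graphs built from greens $\g_i,\g_0^i$, whites $\w_f$ indexed by order-preserving partial maps $\Z\to\N$, a yellow, a black, reds $\r_{ij}$, and shades of yellow labelling $(n-1)$-tuples; \pa\ wins $F^{n+2}$ by repeatedly demanding cones of decreasing tint on a fixed base, forcing \pe\ into red labels whose indices form a strictly decreasing sequence in $\N$; and \pe\ survives $H_k$ by maintaining order-preserving maps $\rho_s$ with gaps of size at least $3^{n-r}$, choosing white, then black, then red in that order, and handling the amalgamation and transformation moves and the yellow labels of $(n-1)$-tuples separately. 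Your architecture is correct and matches the paper's, but without carrying out that construction and the two strategy verifications the theorem is not yet proved.
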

(ii) Follows from the first part of the paper. The $\A$ constructed there, is in $\Nr_n\CA_{\omega}$, and $\B\in S_c\Nr_n\CA_{\omega}$ but 
$\B\notin \Nr_n\CA_{n+1}$. For the last inclusion take a countable atomic algebra in $\RCA_n$ that is not completely representable.
Then $\A\in S\Nr_n\CA_{\omega}$, but $\A\notin S_c\Nr_n\CA_{\omega}$, because it is atomic and {\it not} completely representable.

In what follows we prove the first item.
Fix finite $n>2$. We use a rainbow construction for cylindric algebras. The main difficulty here, is that 
atoms of a  cylindric algebra cannot be coded simply as binary relations. 
This makes them hard to visualize. The way to get round this is to code the atoms as coloured graphs, 
where almost all the information is coded in colours of binary relations. This makes the part of the proof dealing with labeling edges
very similar to the relation algebra case, almost identical. However, one range of colours, namely the shades of yellow,
is reserved to  to label $n-1$ tuples in the graph (Notice that if $n=3$ then the construction
is the same as relation algebras.) This also confines the $n-1$ ary coding to only one part of the construction.

The rainbow construction for cylindric algebras is an instance of the very general method of what Hirsch and Hodkinson called
constructing atom structures, hence algebras from graphs. One fixes a graph $\Gamma$. 
Then a class $K$ of structures in the signature $\Gamma\times n$, is defined
by viewing every node of such graph as a relation symbol of arity $<n$. This condition is very fortunate, 
because it allows all results proved for cylindric algebras easily transferred
to polyadic equality algebras and diagonal free reducts of cylindric algebras. 

The atom structure will actually consist of all functions $f:n\to M$, where $M\in K$, this class will be factored out suitably, to give a set, 
and the equivalence class of $f$ will be denoted by $[f]$. One can define the polyadic operations in an absolutely straightforward manner.

Properties of the graph are reflected in the complex algebra of this atom structure, 
for example $\Cm\rho(\Gamma)$ is representable, iff $\Gamma$ has infinite chromatic 
number. This is a very general construction, and achieving such equivalences at this very abstract level is definitely an achievement.
Our construction will be more tangible. Our class of modes will be coloured graphs.

We shall construct a cylindric atom structure based on {\it finite coloured graphs}, in the sense that these will constitute the atoms of the algebra.
The rainbow algebra for relation algebras was invented by by Hirsch \cite{r}.
Here, following Hodkinson,  we modify the construction, by allowing shades of yellow colours for $n-1$ tuples.
This will complicate matters a little, because such colours create {\it cones}, which are particular coloured graphs , 
and the only part of the construction
dealing the labelling of $n-1$ tuples, will depend on whether certain 
nodes are apexes of the same cone or not. 

The relation algebra constructed by Hirsch {\it does not } have an $n$ dimensional cylindric algebras. 
So basis matrices are replaced by 
$n$-coloured graphs, meaning that they have at most $n$ nodes.  

So taking $n$ coloured graphs as our the atom structure, this codes
the relation algebra 
constructed by Robin Hirsch \cite{r}. We next show that the results proved for this relation algebra atom structure 
lifts to cylindric algebras.


So let $\Z$ denotes the set of integers.
Let $P$ be the set of partial order preserving functions $f:\Z\to \N$ with $|\dom(f)|\leq 2$.

In the following the colours, the edge colours, namely the greens, whites, yellows, black and reds, 
are like the relation algebra case, 
so that we ensure that the rainbow part of the relation algebra construction
is faithfully coded, or is actually the part of the construction when we  deal only of labeling edges.
The shades of yellow are for labeling $n-1$ hyperedges.

\begin{itemize}

\item greens: $\g_i$ ($1\leq i<n-2)$, $\g_0^i$, $i\in \Z$.  

\item whites : $\w, \w_f: f\in P$
\item yellow : $\y$
\item black :  $\bb$
\item reds:  $\r_{ij}$ $(i,j\in \N)$,

\item shades of yellow : $\y_S: S\subseteq_{\omega} \N$ or $S=\N$

\end{itemize}

The above choice of atoms is very similar to the one based on $\Gamma$, as defined in Hirsch Hodkinson, with
$\Gamma$  being an infinite red clique, 
with a notational difference concerning the indicies 
of the greens with superscript
$0$, and the whites are coded by partial functions in $P$. 
These functions will help \pe\ choose the suitable whites in her game during labelling edges.
Note that {\it any } cylindric algebra based on this atom structure will be representable, because 
the chromatic number of the red clique on which it is based
is  infinite, so that its complex algebra is representable.

We should also point out that the greens are different than the relation algebra case; we have $n-2$ new greens.
These will be used to define cones, which are particular coloured graphs,  they are labels for edges in a cone. 
Such cones will play an essential role in the labeling of $n-1$ tuples.
But first we define general coloured graphs:

Using the colours above, 
\begin{definition}
A {\it coloured graph} is an undirected irreflexive graph $\Gamma$
such that every edge of $\Gamma$ is coloured by a unique 
edge colour and some $n-1$ tuples
have a unique colour too, so it is really a hypergraph.
\end{definition}

\begin{definition} 
Let $i\in \Z$, and let $\Gamma$ be a coloured graph  consisting of $n$ nodes
$x_0,\ldots  x_{n-2}, z$. We call $\Gamma$ an $i$ - cone if $\Gamma(x_0, z)=\g^0_i$   
and for every $1\leq j\leq n-2$ $\Gamma(x_j, z)=\g_j$, 
and no other edge of $\Gamma$
is coloured green.
$(x_0,\ldots x_{n-2})$ 
is called the centre of the cone, $z$ the apex of the cone
and $i$ the tint of the cone.

We define a class $\bold J$ consisting of coloured graphs 
with the following properties. 

\begin{enumarab}

\item $\Gamma$ is a complete graph.

\item $\Gamma$ contains no triangles (called forbidden triples) 
of the following types:

\vspace{-.2in}
\begin{eqnarray}
&&\nonumber\\
(\g, \g^{'}, \g^{*}), (\g_i, \g_{i}, \w), 
&&\mbox{any }i\in n-1\;  \\
(\g^j_0, \y, \w_f)&&\mbox{unless }f\in P, i\in dom(f)\\
(\g^j_0, \g^k_0, \w_0)&&\mbox{ any } j, k\in \Z\\
\label{forb:pim}(\g^i_0, \g^j_0, \r_{kl})&&\mbox{unless } \set{(i, k), (j, l)}\mbox{ is an order-}\\
&&\mbox{ preserving partial function }\Z\to\N\nonumber\\
\label{forb: black}(\y,\y,\y), (\y,\y,\bb)\\ 
\label{forb:match}(\r_{ij}, \r_{j'k'}, \r_{i^*k^*})&&\mbox{unless }i=i^*,\; j=j'\mbox{ and }k'=k^*
\end{eqnarray}
and no other triple of atoms is forbidden. This part is the coding of binary relations in the graph.
The next two conditions have to do with labeling $n-1$ tuples, 
and imposing conditions on shades of yellow used to label the base of an $i$ cone; 
$i$ has to belong to the indexing set. 

Edges are labelled like the relation algebra case, $n-1$ 
tuples are labelled by shades of yellow, 
the interaction of the two is pinned down to colouring the cones.

\item If $a_0,\ldots   a_{n-2}\in \Gamma$ are distinct, and no edge $(a_i, a_j)$ $i<j<n$
is coloured green, then the sequence $(a_0, \ldots a_{n-2})$ 
is coloured a unique shade of yellow.
No other $(n-1)$ tuples are coloured shades of yellow.

\item If $D=\set{d_0,\ldots  d_{n-2}, \delta}\subseteq \Gamma$ and 
$\Gamma\upharpoonright D$ is an $i$ cone with apex $\delta$, inducing the order 
$d_0,\ldots  d_{n-2}$ on its base, and the tuple 
$(d_0,\ldots d_{n-2})$ is coloured by a unique shade 
$y_S$ then $i\in S.$

\end{enumarab}
\end{definition}
This is the class of structures $K$ we are dealing with, every element $M$ in is a coloured graph.
and the defining relations above can be coded in first order logic, more precisely, 
every green, white, black, red, atom corresponds to a binary relation, and every $n-1$ colour is coded as an $n-1$ relations, 
and the colured graphs are defined
as the first order structures, of a set of $L_{\omega_1,\omega}$ as presented in \cite{HHbook}.

We define a cylindric algebra of 
dimension $n$. We first specify our  atom structure which will consists of surjections 
to finite coloured graphs, or rather a factroing out of this set, identifying
two coloured graphs the obvious way.

Let $$K=\{a: a \text { is a surjective map from $n$ onto some } \Gamma\in \bold J
\text { with nodes } \Gamma\subseteq \omega\}.$$
We write $\Gamma_a$ for the element of $K$ for which 
$a:n\to \Gamma$ is a surjection.
Let $a, b\in K$ define the following equivalence relation: $a \sim b$ if and only if
\begin{itemize}
\item $a(i)=a(j)\text { and } b(i)=b(j)$

\item $\Gamma_a(a(i), a(j))=\Gamma_b(b(i), b(j))$ whenever defined

\item $\Gamma_a(a(k_0)\dots a(k_{n-2}))=\Gamma_b(b(k_0)\ldots b(k_{n-1}))$ whenever  
defined
\end{itemize}
Let $\mathfrak{C}$ be the set of equivalences classes. Then define
$$[a]\in E_{ij} \text { iff } a(i)=a(j)$$
$$[a]T_i[b] \text { iff }a\upharpoonright n\sim \{i\}=b\upharpoonright n\sim \{i\}.$$
This defines a  $\CA_n$ 
atom structure.
Let $3\leq n<\omega$. 
The idea is to show that $\C_{n}$ be the complex algebra over $\mathfrak{C}$. Using the games devised above, 
we will show that $\C_{n}$ is 
not in $S_c\Nr_n\CA_{n+2}$ 
but an elementary extension of $\A$ belongs to
$\Nr_{n}\CA_{\omega}.$

The games above were formulated for networks on atom structures of cylindric algebras. 
A network has a set of nodels, and every $n$ tuple is labelled by an atom, that is a surjection 
from $n$ to a coloured graph. It is very hard to deal with such networks, so what
we do next, is to translate our games defined above on networks to games on coloured graphs.
First a general remark; the coloured graph and the corresponding unique 
network will have the same set of nodes. 

Let $N$ be an atomic $\C_{n}$ network, that is $N$ maps $n$ tuples, to surjections form $n$ to coloured graphs.
Assume that $N: {}^n\Delta\to K.$ We want to associate a coloured complete graph.  
The nodes are the same as $N$. Informally, we start by labelling edges. 
Let $x,y$ be two distinct nodes in $\Delta$, and $\bar{z}$ be {\it any} tuple in which they occur. We know
that  $N(\bar z)$ is an atom of $\C_{n}$, namely, a surjective map from $n$ to a finite coloured graph; or rather
the class of this map.  This  defines an edge colour of 
$x,y$. Using the fact that the dimension is at least $3$, 
the edge colour depends only on $x$ and $y$
not on the other elements of 
$\bar z$ or the positions of $x$ and $y$ in $\bar z$. 
So actually in the resulting coloured graph every edge is labelled by a finite surjection from $n$
to a coloured graph.

Similarly, $N$ defines shades of yellow  for certain $(n-1)$ tuples.  In this way $N$ 
translates
into a coloured graph. 

More precisely:

\begin{definition}
Let $\Gamma\in \bold J$ be arbitrary. Define the corresponding network $N_{\Gamma}$ 
on $\C_n$,
whose nodes are those of $\Gamma$
as follows. For each $a_0,\ldots a_{n-1}\in \Gamma$, define 
$N_{\Gamma}(a_0,\ldots  a_{n-1})=[\alpha]$
where $\alpha: n\to \Gamma\upharpoonright \set{a_0,\ldots a_{n-1}}$ is given by 
$\alpha(i)=a_i$ for all $i<n$. Then, as easily checked,  $N_{\Gamma}$ is an atomic $\C_{n}$ network.
Conversely, let $N$ be any non empty atomic $\C_n$ network. 
Define a complete coloured graph $\Gamma_N$
whose nodes are the nodes of $N$ as follows:  
\begin{itemize}
\item For all distinct $x,y\in \Gamma_N$ and edge colours $\eta$, $\Gamma_N(x,y)=\eta$
if and only if  for some $\bar z\in ^nN$, $i,j<n$, and atom $[\alpha]$, we have
$N(\bar z)=[\alpha]$, $z_i=x$ $z_j=y$ and the edge $(\alpha(i), \alpha(j))$ 
is coloured $\eta$ in the graph $\alpha$.

\item For all $x_0,\ldots x_{n-2}\in {}^{n-1}\Gamma_N$ and all yellows $\y_S$, 
$\Gamma_N(x_0,\ldots x_{n-2})= \y_S$ if and only if 
for some $\bar z$ in $^nN$, $i_0, \ldots  i_{n-2}<n$
and some atom $[\alpha]$, we have
$N(\bar z)=[\alpha]$, $z_{i_j}=x_j$ for each $j<n-1$ and the $n-1$ tuple 
$\langle \alpha(i_0),\ldots \alpha(i_{n-2})\rangle$ is coloured 
$\y_S.$ Then $\Gamma_N$ is well defined and is in $\bold J$.
\end{itemize}
\end{definition}
The following is then, though tedious and long,  easy to  check:

\begin{theorem}
For any $\Gamma\in \bold J$, we have  $\Gamma_{N_{\Gamma}}=\Gamma$, 
and for any $\C_n$ network
$N$, $N_{{\Gamma}_N}=N.$
\end{theorem}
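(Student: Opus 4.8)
The plan is to show that the two assignments $\Gamma\mapsto N_\Gamma$ and $N\mapsto\Gamma_N$ are mutually inverse, by verifying the two displayed equalities straight from the definitions. The first observation is that, by construction, $\Gamma$, $N_\Gamma$ and $\Gamma_{N_\Gamma}$ all share the \emph{same} node set, and likewise $N$, $\Gamma_N$ and $N_{\Gamma_N}$. Hence a coloured graph in $\mathbf J$ is completely determined by its edge-colouring together with its colouring of $(n-1)$-tuples, and a network is determined by its labelling function; so in each case it suffices to check that the relevant colours (resp. atom-labels) agree. I will freely use the facts, already recorded in the definitions preceding the statement, that $N_\Gamma$ is a genuine atomic $\C_n$-network and that $\Gamma_N$ is a well-defined member of $\mathbf J$; in particular, that in $\Gamma_N$ the colour of an edge $(x,y)$ depends only on $x,y$ and not on the witnessing tuple $\bar z$ or the positions $i,j$, which is a consequence of $n\geq 3$ noted in the text.

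For $\Gamma_{N_\Gamma}=\Gamma$, I would fix distinct nodes $x,y$ of $\Gamma$ and set $\eta=\Gamma(x,y)$. Since $\Gamma$ is complete on its nodes, choose any surjection $\bar z\in{}^n\nodes(\Gamma)$ with $z_i=x$ and $z_j=y$ for some $i,j<n$. By definition $N_\Gamma(\bar z)=[\alpha]$, where $\alpha\colon n\to\Gamma\restriction\rng(\bar z)$ sends $k\mapsto z_k$; thus the edge $(\alpha(i),\alpha(j))=(x,y)$ carries colour $\eta$ in $\alpha$. Reading this back through the definition of $\Gamma_{N_\Gamma}$ gives $\Gamma_{N_\Gamma}(x,y)=\eta=\Gamma(x,y)$. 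The identical computation applied to an $(n-1)$-tuple $(a_0,\dots,a_{n-2})$ coloured by a shade $\y_S$, with a witnessing $\bar z$ satisfying $z_{i_j}=a_j$, yields $\Gamma_{N_\Gamma}(a_0,\dots,a_{n-2})=\y_S$. So the two coloured graphs agree on every edge and every yellow $(n-1)$-tuple, hence $\Gamma_{N_\Gamma}=\Gamma$.

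For $N_{\Gamma_N}=N$, I would fix $\bar a\in{}^n\nodes(N)$. By definition $N_{\Gamma_N}(\bar a)=[\alpha]$, where $\alpha(i)=a_i$ maps $n$ onto $\Gamma_N\restriction\rng(\bar a)$. It remains to show $[\alpha]=N(\bar a)$ in $\mathfrak C$, which by the definition of the equivalence $\sim$ reduces to three checks against a representative of $N(\bar a)$: the same equality pattern ($a_i=a_j$ exactly when the corresponding coordinates of $N(\bar a)$ coincide), the same edge-colours, and the same shades of yellow on $(n-1)$-subtuples. Each of these becomes immediate once, in the definition of $\Gamma_N$, one takes the witnessing tuple $\bar z$ to be $\bar a$ itself: the colours $\Gamma_N$ assigns inside $\rng(\bar a)$ are by construction precisely those recorded by the atom $N(\bar a)$, while the equality pattern is recovered from the network law $N(\delta^i_j)\leq{\sf d}_{ij}$ together with the clause $[a]\in E_{ij}\iff a(i)=a(j)$ of the atom structure. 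All three clauses of $\sim$ then hold, so $N_{\Gamma_N}(\bar a)=N(\bar a)$; as $\bar a$ is arbitrary, $N_{\Gamma_N}=N$.

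The only genuinely delicate point, and the step I expect to absorb most of the writing, is the bookkeeping around well-definedness and the relation $\sim$: one must be certain that reading a colour off a \emph{single} witnessing tuple is legitimate, i.e. that no other choice of $\bar z$ (or of the positions $i,j$) could force a different colour. This is exactly where the coherence conditions of an atomic network (the substitution and cylindrifier inequalities) and the hypothesis $n\geq 3$ enter, guaranteeing that edge-colours are functions of the endpoints alone and yellow-colours functions of the $(n-1)$-set alone. Once this independence is granted, what remains is the routine ``unwind both definitions'' verification sketched above, and the two equalities follow.
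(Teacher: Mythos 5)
Your proposal is correct and is exactly the ``unwind both definitions'' verification that the paper itself declines to write out (the theorem is introduced only with the remark that it is ``though tedious and long, easy to check''). You rightly isolate the one nontrivial point --- that edge-colours and yellow-colours read off a single witnessing tuple are independent of the choice of tuple and positions, which rests on $n\geq 3$ and the network coherence conditions --- and the remaining checks of the three clauses of $\sim$ are routine as you describe.
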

This translation makes the following equivalent formulation of the 
games $F^m(\At\C_n),$ originally defined on networks. 

\begin{definition}
The new  game builds a nested sequence $\Gamma_0\subseteq \Gamma_1\subseteq \ldots $.
of coloured graphs. 
\pa\ picks a graph $\Gamma_0\in \bold J$ with $\Gamma_0\subseteq m$ and 
$|\Gamma_0|=m$. $\exists$ makes no response
to this move. In a subsequent round, let the last graph built be $\Gamma_i$.
$\forall$ picks 
\begin{itemize}
\item a graph $\Phi\in \bold J$ with $|\Phi|=m$
\item a single node $k\in \Phi$
\item a coloured garph embedding $\theta:\Phi\sim \{k\}\to \Gamma_i$
Let $F=\phi\smallsetminus \{k\}$. Then $F$ is called a face. 
\pe\ must respond by amalgamating
$\Gamma_i$ and $\Phi$ with the embedding $\theta$. In other words she has to define a 
graph $\Gamma_{i+1}\in C$ and embeddings $\lambda:\Gamma_i\to \Gamma_{i+1}$
$\mu:\phi \to \Gamma_{i+1}$, such that $\lambda\circ \theta=\mu\upharpoonright F.$
\end{itemize} 
\end{definition}

Let us halt for a minute to take our breath, because we have so many notions involved in our construction, then we discuss possibilites.
We started by a set of colours (atoms), then defined coloured graphs, which are complete graphs excluding certian triangles
corresponding to forbidden triples in defining atom structures of relation algebas. But in addition 
to the relation algebra part, which is not enough to code
$n-1$ ary tuples,  coloured graphs have also hyperegdes which are coloured by shades of yellow, but with a restriction,
namely, we have an $i$ cone, that happens
to be a subgraph
of a coloured graph $\Gamma$ having a base a set cardinality $n-1,$ then $i\in S$ where $\y_S$ is the label of the base. 
(Note that the networks have hyperedges of length $n$ that are labelled by atoms, and coloured graphs also have hyperedges, 
which we, from now on, refer to as $n-1$ tuples, to avoid confustion.
These last are coloured by shades of yellow.

The cones themselves are special coloured graphs  whose sides are labelled by the greens, 
and their base consisting of an $n-1$ tuple is coloured by shades of
yellow, as indicated above. Then we defined a cylindric algebra atom structure consisting of certain maps, 
each such map, is a surjection from $n$ to a coloured graph. 

But there is a crucial difference here that has to be pointed out between networks and coloured graphs. 
Coloured graphs have {\it edges} that has to be labelled by colours. 
Networks have have only $n-1$ tuples that has to be labelled by surjections from $n$ to coloured graphs. 
But the equivalence established above basicaly follows from the fact that the dimension
is $>2$, so that in labelling edges for a coloured graph arising from a network, 
we just take {\it any} tuple in the network containg this edge; 
this will be well defined. If $N$ is a network, and $(x,y)$ is an edge then $\Gamma_N(x,y)$ will be $\eta$, 
if  $\eta$ is the colour of the edge $(\alpha(i), \alpha(j))$
where $[\alpha]$ is the image of $N$ at {\it any} tuple $\bar{z}$ such that $z_i=x$ and $z_j=y$. This does not depend neither on the representative
$\alpha$ nor $z$.
So for nodes $x,y$ in a coloured graph edges are labelled by colours, and not surjections from $n$ to a coloured
graph. So this makes the colouring of edges in coloured graphs identical to the relation algebra case, where every edge in a network has 
a unique label. 
Had we played with networks, then it would have been really hard, 
to extend a given network to a larger one. Because, in such a case,
we would have to label every new $n$ tuple, by a coloured graph having at most $n$ nodes, the old tuples are labelled as they were,
but if we have a new node, and hence new $n$ tuples, then we would have had to label every such new $n$ tuple by a surjection from $n$ 
to a finite coloured graph, responding to every eventiality imposed by \pa s moves.
Fortunately, Hirsch and Hoskinson simplified the game considerably by dealing with coloured graphs, with no restriction on their size, except that 
they are finite, rather than dealing with labellings that involve surjective maps into coloured graphs of a fixed size.

Now let us consider the possibilities. There may be already a point $z\in \Gamma_i$ such that
the map $(k\to z)$ is an isomorphism over $F$.
In this case \ \pe does not need to extend the 
graph $\Gamma_i$, she can simply let $\Gamma_{i+1}=\Gamma_i$
$\lambda=Id_{\Gamma_i}$, and $\mu\upharpoonright F=Id_F$, $\mu(\alpha)=z$.
Otherwise, without loss of generality, 
let $F\subseteq \Gamma_i$, $k\notin \Gamma_i$.
Let ${\Gamma_i}^*$ be the colored graph with nodes $\nodes(\Gamma_i)\cup\{k\}$,
whose edges are the combined edges of $\Gamma_i$ and $\Phi$, 
such that for any $n-1$ tuple $\bar x$ of nodes of 
${\Gamma_i}^*$, the color ${\Gamma_i}^*(\bar x)$ is
\begin{itemize}
\item $\Gamma_i(\bar x)$ if the nodes of $x$ 
all lie in $\Gamma$ and $\Gamma_i(\bar x)$ is defined 
\item $\phi(\bar x)$ if the nodes of $\bar x$ all lie in 
$\phi$ and $\phi(\bar x)$ is defined
\item undefined, otherwise.
\end{itemize}
\pe\ has to complete the labeling of $\Gamma_i^*$ by adding 
all missing edges, colouring each edge $(\beta, k)$
for $\beta\in \Gamma_i\sim\Phi$ and then choosing a shade of 
white for every $n-1$ tuple $\bar a$ 
of distinct elements of ${\Gamma_i}^*$
not wholly contained in $\Gamma_i$ nor $\Phi$, 
if non of the edges in $\bar a$ is coloured green.
She must do this on such a way that the resulting graph belongs to $\bold J$.
If she survives each round, \pe\ has won the play.
Notice that \pe\ has a \ws in the in $F^m(\At(\C_n))$ if and only if 
and only if she has a wininng strategy in the 
graph games defined above. This is tedious and rather long to verify but basically routine. 

\begin{theorem} \pa\ has a winning strategy in $F^{n+2}(\At\C_n.)$
\end{theorem}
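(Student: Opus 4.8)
The plan is to exploit the asymmetry between the integer-indexed greens $\g_0^i$ ($i\in\Z$) and the natural-number-indexed reds $\r_{ij}$ ($i,j\in\N$): since $\Z$ carries infinite strictly descending chains but $\N$ is well-ordered, \pa\ should be able to force \pe\ into producing an impossible strictly decreasing sequence of red indices. Because of the translation between $\C_n$-networks and coloured graphs established above, I would work entirely in the equivalent graph game, where \pa\ picks coloured graphs $\Phi\in\mathbf{J}$ of size $n+2$ and a coloured-graph embedding of $\Phi$ minus one node into the graph built so far, and \pe\ must amalgamate.

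\emph{\pa's strategy.} First \pa\ plays an opening graph containing an $(n-1)$-element ``base'' $B=\{x_0,\dots,x_{n-2}\}$. Then, using cylindrifier (cone) moves, he repeatedly plays apexes $z_0,z_1,z_2,\dots$ over $B$, making $z_s$ the apex of an $i_s$-cone, where $i_0>i_1>i_2>\cdots$ is a strictly decreasing sequence of integers (available since the tints range over $\Z$). For each apex \pe\ must colour $(x_0,z_s)=\g_0^{i_s}$ and $(x_j,z_s)=\g_j$, and must then colour each edge $(z_s,z_t)$ between two coexisting apexes. I would argue that this last edge is forced to be a red: the forbidden-triple conditions rule out a green edge between two apexes (it would complete a green triangle through $x_0$) and rule out the relevant whites, while black and yellow are reserved for configurations lacking green edges. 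So $(z_s,z_t)=\r_{k_sk_t}$ for some naturals $k_s,k_t$. The green--red forbidden triple then forces $\{(i_s,k_s),(i_t,k_t)\}$ to be order preserving, i.e.\ $i_s<i_t\iff k_s<k_t$, and the red-consistency forbidden triple forces the reds on every apex triangle to match; together these yield a well-defined labelling $s\mapsto k_s\in\N$ of the apexes with $i_s<i_t\iff k_s<k_t$. Since the tints strictly decrease, the naturals $k_0>k_1>k_2>\cdots$ must strictly decrease too.

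\emph{Conclusion and the pebble budget.} A strictly decreasing $\omega$-sequence in $\N$ is impossible, so \pe\ cannot respond legally in every round and \pa\ wins. The delicate point is that \pa\ has only $n+2$ nodes and so cannot retain all apexes at once. Here I would have him keep the base $B$ together with the two most recently played apexes, recycling the oldest node for the next cone. Because the order-preserving and matching conditions are local to single triangles, the constraint ``the new red index is strictly smaller'' is transmitted across each reuse provided \pa\ retains the immediately preceding apex; this is precisely where $n+2=(n-1)+3$ pebbles are spent (the base, the previous apex, the current apex, and the node being recycled). Thus every round forces \pe\ to name a strictly smaller natural, and she fails after finitely many rounds.

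The hard part will be the bookkeeping of this pebble count together with ruling out every escape route for \pe\ through the second sort: I must verify that the shades of yellow labelling $(n-1)$-tuples, and the constraint tying the base label $\y_S$ of an $i$-cone to $i\in S$, never afford \pe\ an alternative consistent colouring that breaks the descending red chain. Concretely, this reduces to checking that \pa\ can always arrange his cones so that each apex--apex edge genuinely lands among the reds, which is exactly the $\CA$-analogue of Hirsch's relation-algebra argument in \cite{r}.
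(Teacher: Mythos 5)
Your proposal is correct and follows essentially the same route as the paper's proof: a fixed $(n-1)$-element base, a sequence of cones with strictly decreasing tints in $\Z$, the observation that apex--apex edges are forced red, and the order-preserving/matching conditions on the forbidden triples driving a strictly decreasing sequence in $\N$, with the $n+2$ pebbles accounted for as base plus a sliding window of apexes. The paper's own argument is exactly this, so no further comparison is needed.
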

\begin{proof}
For that we show $\forall$ can win the game $F^{n+2}(\At\C_n)$. In his zeroth move, $\forall$ plays a graph $\Gamma \in \bold J$ with
nodes $0, 1,\ldots, n-1$ and such that $\Gamma(i, j) = \w (i < j <
n-1), \Gamma(i, n-1) = \g_i ( i = 1,\ldots, n), \Gamma(0, n-1) =
\g^0_0$, and $ \Gamma(0, 1,\ldots, n-2) = \y_\omega $. This is a $0$-cone
with base $\{0,\ldots , n-2\}$. In the following moves, $\forall$
repeatedly chooses the face $(0, 1,\ldots, n-2)$ and demands a node (possibly used before)
$\alpha$ with $\Phi(i,\alpha) = \g_i (i = 1,\ldots,  n-2)$ and $\Phi(0, \alpha) = \g^\alpha_0$,
in the graph notation -- i.e., an $\alpha$-cone on the same base.
$\exists$, among other things, has to colour all the edges
connecting nodes. The idea is that by the rules of the game 
the only permissible colours would be red. Using this, $\forall$ can force a
win eventually for else we are led to a a decreasing sequence in $\N$.

In more detail,
In the initial round $\forall$ plays a graph $\Gamma$ with nodes $0,1,\ldots n-1$ such that $\Gamma(i,j)=\w$ for $i<j<n-1$ 
and $\Gamma(i,n-1)=\g_i$
$(i=1, \ldots n-2)$, $\Gamma(0,n-1)=\g_0^0$ and $\Gamma(0,1\ldots n-2)=\y_{N}$.
$\exists$ must play a graph with $\Gamma_1(0,\ldots n-1)=\g_0$.
In the following move $\forall$ chooses the face $(0,\ldots n-2)$ and demands a node $n$ 
with $\Gamma_2(i,n)=\g_i$ and $\Gamma_2(0,n)=\g_0^{-1}.$
$\exists$ must choose a label for the edge $(n,n-1)$ of $\Gamma_2$. It must be a red atom $r_{mn}$. Since $-1<0$ we have $m<n$.
In the next move $\forall$ plays the face $(0, \ldots n-2)$ and demands a node $n+1$ such that  $\Gamma_3(i,n+1)=\g_i^{-2}$.
Then $\Gamma_3(n+1,n)$ $\Gamma_3(n+1,n-1)$ both being red, the indices must match.
$\Gamma_3(n+1,n)=r_{ln}$ and $\Gamma_3(n+1, n-1)=r_{lm}$ with $l<m$.
In the next round $\forall$ plays $(0,1\ldots n-2)$ and reuses the node $n-2$ such that $\Gamma_4(0,n-2)=\g_0^{-3}$.
This time we have $\Gamma_4(n,n-1)=\r_{jl}$ for some $j<l\in N$.
Continuing in this manner leads to a decreasing sequence in $\N$. 
\end{proof}

(Notice that here \pa\  needed at least $n+2$ pebbles. 
The number of pebbles, $k>n$ say, necessary for \pa\ to win the game, 
excludes {\it complete} neat embeddability of $\A$ in
an algebra with $k$ dimensions.) 

\begin{corollary} The algebra $\A$ (definition above) 
is not in ${\bf S_c}\Nr_{n}\CA_{n+2}$. 
\end{corollary}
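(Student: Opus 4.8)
The plan is to obtain the corollary as an immediate contradiction from the two results that directly precede it: Theorem~\ref{thm:n} and the theorem asserting that \pa\ has a \ws\ in $F^{n+2}(\At\C_n)$. First I would record the bookkeeping: the algebra $\A$ of the statement is the rainbow algebra $\C_n$, i.e.\ the complex algebra of the atom structure $\At\C_n=\mathfrak{C}$, so it is atomic; and the game in question is $F^{n+2}(\At\A)$, which is exactly $F^m(\At\A)$ with $m=n+2$. Thus all the objects line up with the hypotheses of Theorem~\ref{thm:n} once we set $m=n+2$, and the two invocations use the same index $m=n+2$.

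The argument then runs by contraposition. Suppose, for contradiction, that $\A\in S_c\Nr_n\CA_{n+2}$. Applying Theorem~\ref{thm:n} with $m=n+2$ yields that \pe\ has a \ws\ in $F^{n+2}(\At\A)$. On the other hand, the preceding theorem tells us that \pa\ has a \ws\ in the very same game $F^{n+2}(\At\A)$. The one point worth stating explicitly is that these two facts are incompatible: $F^{n+2}$ is a two-player game of perfect information with no draws, since a play is won by exactly one of the players (\pe\ wins iff she responds legally in all $\omega$ rounds, and otherwise \pa\ wins by the definition of the game). Playing \pa's \ws\ and \pe's \ws\ against one another produces a single play which both players win, which is absurd; hence at most one player can have a \ws, and the two conclusions above cannot both hold. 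This contradiction refutes the assumption, so $\A\notin S_c\Nr_n\CA_{n+2}$, as required.

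I expect no genuine obstacle in the corollary itself: all the real mathematical content is carried by Theorem~\ref{thm:n} (the passage from complete neat embeddability to a \ws\ for \pe) and by the explicit \ws\ for \pa\ constructed in the preceding theorem, both of which may be assumed here. The only care needed is the harmless notational identification of $\A$ with $\C_n$ and the matching of the pebble parameter $m=n+2$ in both applications, so that the games \pa\ wins and the game in which Theorem~\ref{thm:n} forces a \ws\ for \pe\ are literally the same game $F^{n+2}(\At\C_n)$.
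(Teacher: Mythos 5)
Your proposal is correct and is exactly the argument the paper intends (the corollary is stated without proof, being an immediate consequence of Theorem~\ref{thm:n} applied with $m=n+2$ together with the preceding theorem that \pa\ wins $F^{n+2}(\At\C_n)$, and the standard fact that both players cannot have winning strategies in the same determined-by-play game). The identification of $\A$ with the complex algebra $\C_n$ and the matching of the pebble parameter are handled as the paper implicitly assumes.
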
 

\begin{corollary} The algebra $\A$ is not completely representable
\end{corollary}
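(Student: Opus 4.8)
The plan is to deduce this directly from the preceding theorem (that \pa\ wins $F^{n+2}(\At\C_n)$) together with Theorem~\ref{thm:n}. The latter tells us that if a countable atomic algebra is completely representable, then \pe\ has a \ws\ in the infinite pebble game $F^{\omega}$ played on its atom structure. So it suffices to show that \pe\ has \emph{no} \ws\ in $F^{\omega}(\At\C_n)$, and then read Theorem~\ref{thm:n} contrapositively, using that $\A$ is countable and atomic (its atom structure being the countable structure $\C$).

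First I would transfer \pa's \ws\ in $F^{n+2}(\At\C_n)$ up to the game $F^{\omega}(\At\C_n)$. As remarked after the definition of the games, increasing the number of available pebbles only makes the game easier for \pa: any play of $F^{n+2}$ is a legal play of $F^{\omega}$ in which \pa\ confines all nodes to $\{0,\ldots,n+1\}$, so \pa\ may simply follow his $F^{n+2}$ strategy inside $F^{\omega}$ and still win. Concretely, the strategy described in the proof of the previous theorem never really needs the bound on the number of nodes: \pa\ keeps the base $(0,\ldots,n-2)$ fixed and repeatedly demands an $i$-cone over it with strictly decreasing tint, each new apex forcing the red label on an edge between apexes to strictly decrease; iterating produces an infinite strictly descending sequence in $\N$, which is impossible. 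Hence \pe\ cannot respond legally in every round, so she has no \ws\ in $F^{\omega}(\At\C_n)$.

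Combining, since $\A$ is a countable atomic algebra with $\At\A\cong\At\C_n$ and \pe\ has no \ws\ in $F^{\omega}(\At\A)$, the second part of Theorem~\ref{thm:n} forbids $\A$ from being completely representable. Equivalently, one may argue via the previous corollary: complete representability of a countable atomic algebra entails a complete neat embedding into $\omega$ extra dimensions, i.e.\ membership in $S_c\Nr_n\CA_{\omega}\subseteq S_c\Nr_n\CA_{n+2}$, contradicting $\A\notin S_c\Nr_n\CA_{n+2}$.

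I expect the only delicate point to be the transfer of \pa's \ws\ from the finite pebble game to the infinite one: one must make sure that the extra freedom available to \pe\ in $F^{\omega}$ --- in particular the larger supply of nodes --- cannot be exploited to escape the forced descent of the red indices. This is precisely why the descending-sequence argument is phrased so as to depend only on the cone structure and the colouring rules for reds, and not on any reuse of nodes; the secondary point to keep in mind is the countability of $\A$, which is what licenses the use of Theorem~\ref{thm:n} rather than a direct representation-theoretic argument.
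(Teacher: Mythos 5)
Your argument is correct and rests on the same machinery as the paper's, but the two proofs are routed slightly differently, and the difference matters at exactly the point you yourself flag as secondary. The paper's own proof does not pass through $F^{\omega}$ at all: it observes that the \emph{term algebra} over $\At\C_n$ is countable and not completely representable (by the preceding corollary together with the fact that a completely representable algebra lies in $S_c\Nr_n\CA_{\omega}\subseteq S_c\Nr_n\CA_{n+2}$), and then transfers non-complete-representability to the complex algebra and to every algebra in between, using the fact stated earlier in the paper that if one algebra on an atom structure is completely representable then all algebras sharing that atom structure are. This detour through the term algebra is precisely what discharges the countability hypothesis: the algebra $\A$ of the corollary is (or at least includes the case of) the full complex algebra $\C_n$, which is \emph{uncountable}, so your main route --- ``$\A$ is countable and atomic, apply Theorem~\ref{thm:n}'' --- does not literally apply to it; countability of the atom structure is not countability of the algebra. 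Your parenthetical alternative (complete representability entails membership in $S_c\Nr_n\CA_{\omega}\subseteq S_c\Nr_n\CA_{n+2}$, contradicting the previous corollary) is the clean fix, since that implication needs no countability; alternatively, adopt the paper's two-step argument via the term algebra. The transfer of \pa's \ws\ from $F^{n+2}$ to $F^{\omega}$ that you carry out is sound (more pebbles only help \pa, and his strategy confines play to $n+2$ nodes anyway), but it is not needed for either repair.
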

\begin{proof} The term algebra is countable, is not completely representable. Hence the complex algebra is not completely representable,
and so is any algebra in between based on this atom structure
\end{proof}

But, even still, things get a little bit more complicated when we have {\it hyperlabels}. The network part is translated as above to coloured graphs.
Since the graph and the network have the same nodes, then hyperlabels are simply labels for finite sequences of nodes of the graph.
We refer to the graph and the hyperlabels together as a hypergraph.

Recall from definition~\ref{def:games} that $H_k(\alpha)$ 
is the hypernetwork game with $k$ rounds. So here we have hypernetwork.
A hypernetwork consists of a network together with hyperlabels, functions from finite sequences
of nodels to a set of labels, that is every hyperedge has a label.
The translation of the games $H$ and $H_k$ to hypergraphs is as follows.
\begin{itemize}
\item Fix some hyperlabel $\lambda_0$.  $H_{k}(\alpha)$ is  a 
game the play of which consists of a sequence of
$\lambda_0$-neat hypernetworks 
$N_0, N_1,\ldots$ where $\nodes(N_i)$
is a finite subset of $\omega$, for each $i<\omega$.  

Now le us translate the game $H(\alpha)$ to coloured graphs, which we now call hypergraphs. 
The first kind of moves is very similar to the game $F^m$ without the restriction of finitely many pebbles
or nodes. The second and third moves, are easily  translatable to  coloured graphs.

$N^h$ are the hyperlabels; these we did not have in the game $F^m$, however, we will usualy deal with those
separately, and it wil turn out that it is easier to work with them, in reponse to \pa\ s moves.

Here as before a {\it neat } hypergraph means that it is constant on short hyperedges. 
A short hyperedge $\bar{x}$ consisting of nodes of the graph, is one such that 
there exists nodes $y_0,\ldots y_{n-1}$ such 
$\Gamma(x_i,y_j)\leq d_{01}$, for some $i,j$. 
Due to the correspondence established before between coloured graphs and networks, this is equivalent to the definition 
above given for networks.

We will play a $k$ rounded game on neat hypernetworks. Fix a hyperlabel $\lambda_0$ and a finite $k\geq 3$. 
A {\it neat} hypernetwork, now, is a pair $(\Gamma, N^h)$ with $\Gamma$ a coloured 
graph 
and $N^h$ is a set of functions from a finite sequences of nodes to a fixed set of labels.

Now the first move (baring in mind that we do not have only finitely many pebbles, and accordingly \pe\ cannot reuse nodes),
we get, and indeed this is reflected in the next game on coloured graphs.

\begin{definition}
$\forall$ picks a graph $\Gamma_0\in \bold J$ with $\Gamma_0\subseteq_{\omega} \omega$ and 
here we do not require that $|\Gamma_0|=n$. 
$\exists$ make no response
to this move. In a subsequent round, let the last graph built be $\Gamma_i$.
$\forall$ picks 
\begin{itemize}
\item a graph $\Phi\in \bold J$ with $|\phi|=n,$
\item a single node $k\in \Phi,$
\item a coloured graph embedding $\theta:\Phi\sim\{k\}\to \Gamma_i.$
Let $F=\phi\sim \{k\}$. Then, as before,  $F$ is called a face. 
\pe must respond by amalgamating
$\Gamma_i$ and $\phi$ with the embedding $\theta$ as before. 
In other words she has to define a 
graph $\Gamma_{i+1}\in C$ and embeddings $\lambda:\Gamma_i\to \Gamma_{i+1}$
$\mu:\phi \to \Gamma_{i+1}$, such that $\lambda\circ \theta=\mu\upharpoonright F.$

\end{itemize} 
Now we may write $N_{\Gamma}$ or simply $N$ instead of $\Gamma$, but in all cases we are dealing with {\it coloured graphs} 
that is {\it the translation} of networks. That is when we write $N$ then, $N$ will be viewed as a coloured graph.

The other moves are exactly like the relation algebra case, since the nodes of a network on $\C_n$ is the same
as that of that of the corresponding coloured graphs.

Alternatively, \pa\ can play a \emph{transformation move} by picking a
previously played coloured hypergraphs $N$ and a partial, finite surjection
$\theta:\omega\to\nodes(N)$, this move is denoted $(N, \theta)$.  \pe\
must respond with $N\theta$.  Finally, \pa\ can play an
\emph{amalgamation move} by picking previously played hypergraphs
$M, N$ such that $M\equiv^{\nodes(M)\cap\nodes(N)}N$ and
$\nodes(M)\cap\nodes(N)\neq \emptyset$  
This move is denoted $(M,
N)$.  To make a legal response, \pe\ must play a $\lambda_0$-neat
hypergraph $L$ extending $M$ and $N$, where
$\nodes(L)=\nodes(M)\cup\nodes(N)$.
Again, \pe\ wins $H(\alpha)$ if she responds legally in each of the
$\omega$ rounds, otherwise \pa\ wins. 
\end{definition}

We can  alter the rules of the game
$H_k$ slightly, to make life easier.
We impose certain restrictions on \pa (that are only apparent).

  \begin{itemize}

\item
\pa\ is only allowed to play transformation moves $(N, \theta)$ if $\theta$ is {\it injective.}

\item
\pa\ is only allowed to play an amalgamation move $(M, N)$ if for all
$m\in\nodes(M)\setminus\nodes(N)$ and all
$n\in\nodes(N)\setminus\nodes(M)$ the map $\set{(m, n)}\cup\set{(x,
x):x\in\nodes(M)\cap\nodes(N)}$ is not a {\it partial isomorphism}.  I.e. he
can only play $(M, N)$ if the amalgamated part is `as large as
possible'.
\end{itemize}  
If, as a result of these restrictions, \pa\ cannot move at
some stage then he loses and the game halts.

It is easy to check that \pa\ has a \ws\ in $H(\alpha)$ iff he has a
\ws\ with these restrictions to his moves.  Also, if \pa\ plays with
these restrictions to his moves, if \pe\ has a \ws\ then she has a
\ws\ which only directs her to play strict hypernetworks.  The same
holds when we consider $H_n(\alpha)$.  
We will assume that \pa\ plays 
according to these restrictions.
\end{itemize}



We make two comments, before working out the details, to give a general idea of the esence of the construction, and why it actually works? 

First \pe\ {\it cannot}  win the game $H$ with $\omega$ many rounds, 
because in this case \pa\ has an `infinite space'
to use essentially the winning strategy he used before, namely, 
forcing a strictly decreasing sequences of indices of reds. 

In fact, by some reflection, one can see that as far as \pe\ is concerned,  
winning $H$ is harder than winning $F^{\omega}$, in fact strictly so, which, in turn, strictly harder than winning 
$F^{n+2}$. (As we saw before, \pa\ can win the latter two games, in fact \pa\ can win any game $F^m$ with $n+2\leq m\leq \omega$.)

But if the game is truncated to any
finite $k$, we will show that she can win the game $H_k$. 
(Notice that if $F^m$ was also restricted to finitely many rounds then \pe\ would have won, by playing as he played
before, remembering that she lost in this last game because there were infinitely many rounds.)

{\it But why can \pe\  win the finite games. Basically, because the only {\it winning} 
strategy for \pa\ is to win on a red clique (like he played before), an a necessary condition for this 
is the exstence of $\omega$ round. 
In the finite case, that is when we have only finitely many rounds, as we proceed to show, \pe\  has enough colours, to respond to \pa\ moves. 
She uses white, then black, then red. We shall see that these three colours suffice, in case of labelling edges, 
that are {\it not} appexes of the same cone. To label edges that are appexes of the same cone \pe\ 
can only use reds, and this will not lead to a red clique (like in the game $F^m$, $m\geq 2$)
because the game is finite.}

However, to win the game \pe\ has to repond to {\it every} possible move of \pa\
A winning strategy here is complicated because \pa\
has  three kinds of moves, which makes it harder for \pe\ to win; she has to respond to every such move.
Besides corresponding to such moves \pe\ has to 
label also the hyperedges, in the hypergraph produced by \pa\. Furthermore, in the second move \pe\ really has no choice.

Neat hyperedges are easy. In a play \pe\ is required to play $\lambda_0$ neat hypernetworks, so she has no choice about the 
hyperlabels  for short hyperedges, these are labelled by $\lambda_0$. In response to a cylindrifier move 
all long hyperedges not incident with $k$ necessarily keep the hyperlabel they had in $N$. 

All long hyperedges incident with $k$ in $N$
are given unique hyperlabels not occuring as the hyperlabel of any other hyperedge in $N$. 

We can assume, without loss of generality, that we have infinite supply of hyperlabels of all finite arities so this is possible.
In reponse to an amalgamation move $(M,N)$ all long hyperedges whose range is contained in $\nodes(M)$ 
have hyperlabel determined by $M$, and those whose range is contained in nodes $N$ have hyperlabel determined
by $N$. If $\bar{x}$ is a long hyperedge of \pe\ s response $L$ where 
$\rng(\bar{x})\nsubseteq \nodes(M)$, $\nodes(N)$ then $\bar{x}$ 
is given  a new hyperabel, not used in any previously 
played hypernetwork and not used within $L$ as the label of any hyperedge other than $\bar{x}$.
This completes her strategy for labelling all hyperedges.

Now we turn to the real core of the construction. We shall deal now only with the graph part of the hypergraph. 
We need to label edges, and also label the $n-1$ tuples suitably by yellow shades.
Now we give \pe\ s strategy for edge labelling. This is very similar to the relation algebra case except that we deal with cylindrfier moves, 
instead of triangle ones. We need some notation and terminology  taken from \cite{r}. 

Every irreflexive edge of any hypergraph has an owner \pa\ or \pe\ namely the one who played this edge.
We call such edges \pa\ edges or \pe\ edges. Each long hyperedge $\bar{x}$ in a hypergraph $N$ 
occuring in the play has an envelope $v_N(\bar{x})$ to be defined shortly. 
In the initial round of \pa\ plays $a\in \alpha$ and \pe\ plays $N_0$ 
then all irreflexive edges of $N_0$ belongs to \pa\. There are no long hyperdeges in $N_0$. If in a later move, 
\pa\ plays the transformation move $(N,\theta)$ 
and \pe\ responds with $N\theta$ then owners and envelopes are inherited in the obvious way. 
If \pa\ plays a cylindrifier move and \pe responds with $M$ then the owner
in $M$ of an edge not incident with $k$ is the same as it was in $N$
and the envelope in $M$ of a long hyperedge not incident with $k$ is the same as that it was in $N$. 
The edges $(f,k), (k,f)$ belong to \pa\ in $M$ all edges $(l,k)(k,l)$
for $l\in \nodes(N)\sim F$ (where $F$ is the face played in the cylindrifier move) belong to \pe\ in $M$.
if $\bar{x}$ is any long hyperedge of $M$ with $k\in \rng(\bar{x})$, then $v_M(\bar{x})=\nodes(M)$.
If \pa\ plays the amalgamation move $(M,N)$ and \pe\ responds with $L$ 
then for $m\neq n\in nodes(L)$ the owner in $L$ of a edge $(m,n)$ is \pa\ if it belongs to
\pa\ in either $M$ or $N$, in all other cases it belongs to \pe\ in $L$. If $\bar{x}$ is a long hyperedge of $L$
then $v_L(\bar{x})=v_M(\bar{x})$ if $\rng(\bar{x})\subseteq \nodes(M)$, $v_L(\bar{x})=v_N(\bar{x})$ and  $v_L(\bar{x})=\nodes(M),$ otherwise.
This completes the definition of owners and envelopes.
By induction on the number of rounds one can show

\begin{athm}{Claim} Let $M, N$ ocur in a play of $H_k(\alpha)$ in which \pe\ uses default labelling
for hyperedges. Let $\bar{x}$ be a long hyperedge of $M$ and let $\bar{y}$ be a long hyperedge of $N$.
\begin{enumarab}
\item For any hyperedge $\bar{x}'$ with $\rng(\bar{x}')\subseteq _M(\bar{x})$, if $M(\bar{x}')=M(\bar{x})$
then $\bar{x}'=\bar{x}$.
\item if $\bar{x}$ is a long hyperedge of $M$ and $\bar{y}$ is a long hyperedge of $N$, and $M(\bar{x})=N(\bar{y})$ 
then there is a local isomorphism $\theta: v_M(\bar{x})\to v_N(\bar{y})$ such that
$\theta(x_i)=y_i$ for all $i<|x|$.
\item For any $x\in \nodes (M)\sim v_M(\bar{x})$ and $S\subseteq v_M(\bar{x})$, if $(x,s)$ belong to $\forall$ in $M$
for all $s\in S$, then $|S|\leq 2$.
\end{enumarab}
\end{athm}  

Now we define \pe\ s strategy for choosing the labels for edges and yellow colours for $n-1$ hyperedges.
We proceed inductively. This part is taken from \cite{r}.
Let $N_0, N_1,\ldots N_r$ be the start of a play of $H_k(\alpha)$ just before round $r+1$. 
\pe\ computes partial functions $\rho_s:Z\to N$, for $s\leq r$. 
These partial functions wil help \pe\ specify the suffix of the red atoms she has to choose in case whites and blacks do
not work, in reponse to \pa\ s move. It has to do only with labelling edges.  Inductively
for $s\leq r$ suppose

I. If $N_s(x,y)$ is green or yellow then $(x,y)$ belongs to $\forall$ in $N_s$.

II. $\rho_0\subseteq \ldots \rho_r$

III. $\dom(\rho_s)=\{i\in Z: \exists t\leq s, x,y\in \nodes(N_t), N_t(x,y)=\g_0^i\}$

IV. $\rho_s$ is order preserving: if $i<j$ then $\rho_s(i)<\rho_s(j)$. The range
of $\rho_s$ is widely spaced: if $i<j\in \dom\rho_s$ then $\rho_s(i)-\rho_s(j)\geq  3^{n-r}$, where $n-r$ 
is the number of rounds remaining in the game.

V. For $u,v,x,y\in \nodes(N_s)$, if $N_s(u,v)=\r_{\mu,\delta}$, $N_s(x,u)=\g_0^i$, $N_s(x,v)=\g_0^j$
$N_s(y,u)=N_s(y,v)=\y$ then

(a) if $N_s(x,y)\neq \w_f$ then $\rho_s(i)=\mu$ and $\rho_s(j)=\delta$

(b) If $N_s(x,y)=\w_f$ for some $f\in P$ , the $\mu=f(i)$ , $\delta=f(j)$.

VI. $N_s$ is a strict $\lambda_0$ neat hypernetwork.

To start with if $\forall$ plays $a$ in the intial round then $\nodes(N_0)=\{0,1,\ldots n-1\}$, the 
hyperedge labelling is defined by $N_0(0,1,\ldots n)=a$.

In reponse to a cylindrifier  move by \pa\, for some $s\leq r$ and some $p\in Z$, 
\pe\ must extend $\rho_r$ to $\rho_{r+1}$ so that $p\in \dom(\rho_{r+1})$
and the gap between elements of its range is at least $3^{n-r-1}$. Inductively, $\rho_r$ is order preserving and the gap betwen its elements is 
at least $3^{n-r}$, so this can be maintained in a further round.  

If \pa\ chooses non green atoms, green atoms with the same suffix, or green atom whose suffices 
already belong to $\rho_r$, there would be fewer 
elements to add to the domain of $\rho_{r+1}$, which makes it easy for \pe\ to define $\rho_{r+1}$.
Tis establishes properties $II-IV$ for round $r+1$.

Let us assume that  \pa\ played the cylindrifier  move. 
\pe\ has to choose labels for $\{(x,k), (k,x)\}$ 
$x\in \nodes(N_s)\sim F$, where $F$ is the face, and also for $n-1$ tuples so that the outcome is an $n$ coloured graph, the latter case will be dealt with separately.
Let us start with edges. She chooses labels for the edges
$(x,k)$ one at a time and then determines the reverse edges $(k,x)$ uniquely.
Property $I$ is clear since in all cases the only atoms \pe\ chooses white, black or red.
She never chooses green.

{\it  We distinguish between two case.

\begin{enumarab}
\item  if $x$ and $k$ are both apexes of cones on $F$, then \pe\ has no choice but to pick a 
red atom, that is not used before, and because the game is finite, she has enough reds; this cannot lead to an infinite clique. 
The colour she chooses is uniquely defined (as in the game $F^{n+2}$).

\item Other wise, this is not the case, so for some $i<n-1$ there is no $f\in F$ such 
that $N_s(k, f), N_s(f,x)$ are both coloured $\g_i$ or if $i=0$, they are coloured
$\g_0^l$ and $\g_0^{l'}$ for some $l$ and $l'$.
\end{enumarab}}

In the second case \pe\ uses the normal strategy in rainbow constructions. She chooses white if possible, else black and if both are not possible
she chooses red. In the last choice, which is the most tricky, 
 she uses $\rho_s$ and the suffix $f$ in $w_f$ to help her choose the suffices of red atoms.

Now we distinguish between several subcases of the second case. 
We assumed that \pa\ played the cylindrifier move $(N_s, F)$, here $F$ is the face,  in round $r+1$, that \pe\ survived till the $r$ th round,
and $x$ and $k$ are not appexes of the same cone. 

This is similar to the Hirsch's labelling edges, for networks \cite{r}.  

Let $i,j\in F$.
\begin{enumarab}

\item Suppose that it is not the case that  $N_s(x,i)$ and $N_s(x,j)$  are both green. 
Let $S=\{p\in \Z: (N_s(x,i)=\g_0^p\lor N_s(x,i)=\y\lor  N_s(x,j)=\g_0^p)\lor N_s(x,j)=\y\}.$
Then $|S|\leq 2$. \pe\ lets $N_{s+1}(x,k)=\w_f$ for some $f$ with $\dom(f)=S$.

Suppose that  $N_s(i,j)=\r_{\beta, \mu}$, $N_s(x,i)=\g_0^p$, $N_s(x,j)=\g_0^q$ for some $p,q\in Z$. 
By property $(IV)$ $f=\{(p,\beta), (q,\mu)\}$ is order preserving.
\pe\ lets $N_{s+1}(x,k)=\w_f$ in this case.

In all other cases: $N_s(i,j)$ is not red, or if it is 
then it is not the case that $N_s(x,i)$ $N_s(x,j)$ are both green, and it is not the case that $N_s(x,i)=N_s(x,j)=\y$, 
she lets $f:S\to \N$ an arbitrary order preserving function.
The only forbidden triangles involving $\w_f$ are avoided. Since \pe\ does not change green or yellow atoms to label new edges
and $N_{r+1}(x,k)=\w_f$, all triangles involving the new edge $(x,k)$ are consistent in $N_{r+1}$.
Clearly propery $VI$ holds after round $r+1$.

\item Else it is not the case that $N_s(x,i)=N_s(x,j)=\y$,  \pe\ lets $N_r(x,k)=\bb$. 
Property $V$ is not applicable in this case.
The only forbidden triple involving the atom $\bb$ is avoided, so all triangles 
$(x,y,k)$ are consistent in $N_{r+1}$ and property
$VI$ holds after round $r+1.$

\item If neither case above applies, either $N_s(x,i)=\g_p^0$ ad $N_s(x,j)=\y$
or $N_s(x,i)=\y$ and $N_s(x,j)=\g_0^p$
Assume the first case.  There are two subcases.
\begin{enumroman}
\item $N_s(i,j)\neq w_f$ for all $f\in P$. \pe\ lets $\mu=\rho_{r+1}(p)$, $\delta=\rho_{r+1}(q)$,
maintaining property $Va$. The only forbidden triples of atoms involving $r_{\mu,\delta}$ are avoided. The triple
of atoms form a traingle $(x,y,k)$ will not be forbidden since the only green edge incident 
with $k$ is $(i,k)$ and since $\rho_{r+1}$ is order preserving. 

Concerning forbidden triples involving reds. Suppose that we have 
$N_s(x,y), N_{r+1}(y, k)$ are both red for some $y\in \nodes(N_r).$
We have $y\notin \{i,j\}$ so \pe\ chose the red label $N_{r+1}(y,k)$. By her strategy we have
$N_s(i,y)=\g_t$ and $N_s(j,y)=\y$ . By property $Va$ for $N_{r+1}$ we have 
$N_{r+1}(x,y)=r_{\rho_{r+1}(p)\rho_{r+1}(t)}$ and $N_{r+1}(y,k)=r$ 
The property $VI$ holds for $N_{r+1}$

\item $N_s(i,j)=\w_f$. By consistency of $N_s$,  we have $p\in \dom(f)$ and since \pa\ s move 
we have $q\in \dom(f)$. \pe\  lets $\mu=f(p)$ $\delta=f(q)$
maintaining property
$V$ for round $r+1$. 

Concerning forbidden triples of atoms involving reds $r_{\mu,\delta}$.
Since $f$ is order preserving and since the only green edge incident
with $k$ is $(i,k)$ in $N_{r+1}$  triangles involving the new edge $(x,k)$ cannot give a forbidden triple. 

For the other case (involving reds) let $y\in \nodes(N_s)$ and suppose
$N_{r+1}(x,y)$ and  $N_{r+1}(y,k)$ are both red. As above, by her strategy we must have $N_s(y,i)=\g_t$ for some $t$ and $N_s(y,j)=\y$.
By consistency of $N_s$ we have $t\in \dom(f)$ and the current part of her strategy she lets $N_{r+1}(y,k)=\r_{f(t),f(q)}.$ 
By property $Vb$ for $N_s$
we have $N_{r+1}(x,y)=\r_{f(p), f(t)}$. 
So the triple of atoms from the triangle $(x,y,k)$ is not forbidden. This establishes propery $(VI)$ for
$N_{r+1}.$
\end{enumroman}
\end{enumarab}
We have finished with cylindrifier moves. Now we move to amalgamation moves.
Although our hypernetworks are all strict, it is not necessarily the 
case that hyperlabels label unique hyperedges - amalgmation moves can force that 
the same hyperlabel can label more than one hyperedge. 
However, within the envelope of a hyperdege $\bar{x}$, the hyperlabel $N(\bar{x})$ is unique.

We consider an amalgamation move $(N_s,N_t)$ chosen by \pa\ in round $r+1$.
\pe\ has to choose a label for each edge $(i,j)$ where $i\in \nodes(N_s)\sim \nodes(N_t)$ and $j\in \nodes(N_t)\sim (N_s)$.
This determines the label for the reverse edge.
Also \pe\ has to choose a $\y_S$ for any $n-1$ tuple $\bar{a}$, that is not contained completely in only one of $N_t$ or $N_s$.

Let $\bar{x}$ enumerate $\nodes(N_s)\cap \nodes(N_t)$
If $\bar{x}$ is short, then there are at most $n$nodes in the intersection 
and this case is similar to the cylindrifier moves.  If not, that is if $\bar{x}$ is long in $N_s$, then by the claim 
there is a partial isomorphism $\theta: v_{N_s}(\bar{x})\to v_{N_t}(\bar{x})$ fixing
$\bar{x}$. We can assume that $v_{N_s}(\bar{x})=\nodes(N_s)\cap \nodes N_t)=\rng(\bar{x})=v_{N_t}(\bar{x})$.
It remains to label the edges $(i,j)\in N_{r+1}$ where $i\in \nodes(N_s)\sim \nodes N_t$ and $j\in \nodes(N_t)\sim \nodes(N_s)$. 
Her startegy is similar to the cylindrfier  move. If $i$ and $j$ are appexes of the same cone she choose a red. If not she 
chooses  white atom if possible, else the black atom if possible, otherwise a red atom.
She never chooses a green atom, she lets $\rho_{r+1}=\rho_r$ and properies $II$, $III$, $IV$ remain true
in round $r+1$.

\begin{enumarab}  

\item There is no $x\in \nodes(N_s)\cap \nodes(N_t)$ such that $N_s(i,x)$ and $N_t(x,j)$ are both green. 
If there are nodes $u,v\in \nodes(N_s)\cap \nodes(N_t)$ such that
$N_s(u,v)=\r_{\beta,\mu}$, $N_s(i,u)=\g_0^p$, $N_s(i,v)=\g_0^q$, $N_t(u,j)=N_t(v,j)=y$ for some $\beta, \mu\in N$, $p,q\in Z$
or the roles of $i,j$ are swapped, she lets $f=\{(p,\beta), (q,\mu)\}$ and sets $N_{r+1}(i,j)=w_f$.
Since all edges labelled by green or yellow atoms belong to \pa\, we can apply the above claim to 
show that the points $u,v$ are unique so $f$ is well defined.
This is also true if $\bar{x}$ is short, since in this case there are only
two nodes
in $\nodes(N_s)\cap \nodes(N_t)$.

If there are no such $u,v$ as described then let $S=\{p\in \Z: \exists y\in \nodes(N_s)\cap \nodes(N_t), (N_s(i,y)=\g_0^p\land N_t(y,j)=y)\lor 
(N_s(i,y)=y\land N_t(y,j)=\g_0^p)\}$. Then $|S|\leq 2$. 
Let $f$ be any order preserving function and $\exists$ let $N_{r+1}=w_f$.
Property $(VI)$ holds for $N_{r+1}$ as for traingle moves.

\item Otherwise, if there is no such $x$, then she lets $N_r(i,j)=\b$. As with cylindrfier moves all properties are maintained.

\item Otherwise, there are $x,y\in \nodes(N_s)\cap \nodes(N_t)$ such that $N_s(i,x)=g_k$, $N_s(x,j)=g_l$ 
for some $k,l\in N$ and $N_s(i,y)=N_t(y,j)=y$. By the above proven claim  $x,y$ are unique. 
She labels $(i,j)$in $N_r$ with a red atom $r_{\beta,\mu}$ where
\begin{enumroman}
\item If $N_s(x,y)\neq \w_f$ for all $f\in P$, then $\beta=\rho_{r+1}(k)$, $\mu=\rho_{r+1}(l)$.
This maintains property $Va$.
\item Otherwise $N_s(x,y)=\w_f$ for some $f\in F$ and $\beta=f(k)$ $\mu=f(l)$.

\end{enumroman}
\end{enumarab}

Now we turn to coluring of $n-1$ tuples. For each tuple $\bar{a}=a_0,\ldots a_{n-2}\in N^{n-1}$ with no edge 
$(a_i, a_j)$ coloured green, then  \pe\ colours $\bar{a}$ by $\y_S$, where
$$S=\{i\in \N: \text { there is an $i$ cone in $N$ with base $\bar{a}$}\}.$$
We need to check that such labeling works.

Let us check that $(n-1)$ tuples are labeled correctly, by yellow colours. 
Let $D$ be  set of $n$ nodes, and supose that $N\upharpoonright D$ 
is an $i$ cone with apex
$\delta$ and base $\{d_0,\ldots d_{n-2}\}$, and that the tuple $(d_0,\ldots d_{n-2})$ is labelled $\y_S$ in $N$. 
We need to show that $i\in S$. If $D\subseteq N$, then inductively the graph
$N$ constructed so far is in $\bold J$, and therefore
$i\in S$. If $D\subseteq \Phi$ then as \pa chose $\Phi$ in $\bold J$ we get also $i\in S$. If neither holds, then $D$ contains $\alpha$ 
and also some
$\beta\in N\sim \Phi$. \pe\ chose the colour $N^+(\alpha,\beta)$ and her strategy ensures her that it is green. 
Hence neither $\alpha$ or $\beta$ can be the apex of the cone $N^+\upharpoonright D$, 
so they must both lie in the base $\bar{d}$. 
This implies that 
$\bar{d}$ is not yet labelled in $N^*$, so \pe\ has applied her strategy to choose the colour $\y_S$ to label $\bar{d}$ in $N^+$. 
But this strategy will have chosen $S$ containing $i$ since $N^*\upharpoonright D$ is already a cone
in $N^*$.  Also \pe\ never chooses a green edge, so all green edges of $N^+$ lie in $N^*$. 

That leaves one (hard) case, where there are two nodes $\beta, \beta',
\in N$, \pe\ colours both $(\beta, \alpha)$ and $(\beta',
\alpha)$ red, and the old edge $(\beta, \beta')$ has already been
coloured red (earlier in the game).
If $(\beta, \beta')$ was coloured by \pe\, that is \pe\ is their owner, then there is no problem. 
So suppose, for a contradiction, that $(\beta, \beta')$ was coloured by
\pe\. Since \pe\ chose red colours for $(\alpha, \beta)$
and $(\alpha, \beta')$, it must be the case that there are cones in
$N^*$ with apexes $\alpha, \beta, \beta'$ and the same base,
$F$, each inducing the same linear ordering $\bar{f} = (f_0,\ldots,
f_{n-2})$, say, on $F$. Of course, the tints of these cones may all
be different. Clearly, no edge in $F$ is labelled green, as no cone base can contain
green edges. It follows that $\bar{f}$  must be labeled by some
yellow colour, $\y_S$, say. Since $\Phi\in \bold J$, it obeys its definition, so
the tint $i$ (say) of the cone from $\alpha$ to $\bar{f}$ lies in
$S$. Suppose that $\lambda$ was the last node of $ F \cup \{ \beta,
\beta' \}$ to be created,as the game proceeded. As $ |F \cup \{
\beta, \beta' \}| = n + 1$, we see that \pa\ must have chosen
the colour of at least one edge in this : say, $( \lambda, \mu )$.
Now all edges from $\beta$ into $F$ are green,  so \pe\ is the owner of them
as well as of  $(\beta, \beta')$.

The same holds for edges from $\beta'$ to $F$. Hence $\lambda, \mu \in F$. 
We can now see that it was \pe\ who chose the colour $\y_S$ of
$\bar{f}$. For $\y_S$ was chosen in the round when $F$'s last node,
i.e., $\lambda$ was created. It could only have been chosen by
\pa\ if he also picked the colour of every edge in $F$
involving $\lambda$. This is not so, as the edge $(\lambda, \mu)$
was coloured by \pe\, and lies in $F$.
As $i \in S$, it follows from the definition of \pa\'s strategy
that at the time when $\lambda$ was added, there was already an
$i$-cone with base $\bar{f}$, and apex $N$ say.
We claim that $ F \cup \{ \alpha \}$ and $ F \cup \{ N \}$ are
isomorphic over $F$. For this, note that the only $(n - 1)$-tuples of
either $ F \cup \{ \alpha \}$ or $ F \cup \{ N \}$ with a
yellow colour are in $F$ ( since all others involve a green edge
). But this means that \pe\ could have taken $\alpha = N$ in
the current round, and not extended the graph. This is contrary to
our original assumption, and completes the proof.

{\bf atoms for $n-1$ tuples in the amalgamation move, like above}

\section{Blow up and blur}

The idea is to blow up a finite structure, replacing each 'colour or atom' by infinitely many, using blurs
to  represent the resulting term algebra, but the blurs are not enough to blur the structure of the finite structure in the complex algebra. 
Then, the latter cannot be representable due to a {\it finite- infinite} contradiction. 
This structure can be a finite clique in a graph or a finite relation algebra or a finite 
cylindric algebra.

We discuss the possibility of obtaining stronger results concerning completions, 
for example we approach the problem as to whether classes of subneat reducts are 
closed under completions, and analogous results 
for infinite dimensions.  Partial results in this direction are obtained by Sayed Ahmed, some of which will be 
mentioned below.

The main idea is to {\it split and blur}. Split  what? You can split a clique by taking $\omega$ many disjoint copies of it, 
you can split  a finite relation algebra, 
by splitting each atom into $\omega$ many, you can split a finite cylindric algebra. Generally, the splitting has to do with blowing up a finite structure
into infinitely many.

Then blur what? On this split one adds a subset of  a set of fixed in advance blurs, usually finite,
and then define an infinite atom structure, induced by the properties of the finite structure he originally started with.
It is not this atom structure that is blurred but rather the original finite structure.
This means that the term algebra built on this new atom structure, 
that is the algebra generated by the atoms,
coincides with a carefully chosen partition of the set of atoms obtained after splittig and bluring
up to minimal  deviations, so the original finite relation algebra is blurred to the extent that is invisible on this level.

The term algebra will be representable, using all such blurs as colours,
But the original algebra structure re-appears in the completion of this term algebra, that is the complex algebra of the atom structure,
forcing it to be
non representable, due to a finite-infinite discrepancy. However, if the blurs are infinite, then, they will blur also the structure
of the small algebra in the complex algebra, and the latter will be representable, inducing a complete representation of 
the term algebra.

\subsection{Main definition and examples}

We start by giving rigouous definitions of blowing up and bluring a finite structure.
In what follows, by an atom structure, we mean an atom structure of any class of completely additive Boolean algebras.

Let $N$ be a graph, in our subsequent investigations $N$ will be finite. But there is no reason to impose restriction on our next definition, which we try
keep as general as possible. By induce, we mean `define in a natural way', and we keep natural at this level of ambiguity.

\begin{definition}
\begin{enumarab}
 
\item A splitting of $N$ is a disjoint union  $N\times I$, where $I$ is an infinite set. 

\item  A blur for $N$ is any  set  $J$. 

\item An atom structure $\alpha$ is blown up and blurred if, there exists a subset $J'$ of a set $J$ of blurs, possibly empty,
such that  $\alpha$ has underlying set $X=N\times I \times J'$; the latter atom structure is called a blur of $N$ via $J$, 
and is denoted by $\alpha(N,J).$  Furthermore, every $j\in J$, induces a non-principal ultrafilter in $\wp(X)$.

\item An atom structure $\alpha(N, J)$ reflects $N$, if $N$ is faithfully represented in $\Cm\alpha(N,J)$

\item An atom structure $\alpha(N,J)$ is  weak if $\Tm\alpha(N,J)$ is representable.

\item An atom structure $\alpha(N, J)$ is very weak $\Tm\alpha(N,J)$ is not representable.

\item An atom structure $\alpha(N,J)$ is strong if  $\Cm\alpha(N,J)$ is representable.
\end{enumarab} 

\end{definition}


We give two examples of weak atom structures.
The first construction builds two relativized 
set algebras based on a certain model that is in turn a Fraisse limit of a class of certain
labelled graphs, with the labels coming from $\G\cup \{\rho\}\times n$, where $\G$ is an arbitrary graph and $\rho$ is a new colour. 
Under certain conditions on $\G$, the first set algebra can be represented on square units, the second, its completion, cannot.

\subsection{First example}



Let $\G$ be a graph. One can  define a family of labelled graphs $\cal F$ such that every edge of each graph $\Gamma\in {\cal F}$,
is labelled  by a unique label from
$\G\cup \{\rho\}\times n$, $\rho\notin \G$, in a carefully chosen way. The colour of $(\rho, i)$ is
defined to be $i$. The \textit{colour} of $(a, i)$ for $a \in \G$  is $i$.
$\cal F$ consists of all complete labelled graphs $\Gamma$ (possibly
the empty graph) such that for all distinct $ x, y, z \in \Gamma$,
writing $ (a, i) = \Gamma (y, x)$, $ (b, j) = \Gamma (y, z)$, $ (c,l) = \Gamma (x, z)$, we have:\\
\begin{enumarab}
\item $| \{ i, j, l \} > 1 $, or
\item $ a, b, c \in \G$ and $ \{ a, b, c \} $ has at least one edge
of $\G$, or
\item exactly one of $a, b, c$ -- say, $a$ -- is $\rho$, and $bc$ is
an edge of $\G$, or
\item two or more of $a, b, c$ are $\rho$.
\end{enumarab}

One forms a labelled graph $M$ which can be viewed as model of a natural signature,
namely, the one with relation symbols $R_{(a, i)}$, for each $a \in \G \cup \{\rho\}$, $i<n$ and

Then one takes a subset $W\subseteq {}^nM$, by roughly dropping assignments that do not satify $(\rho, l)$ for every $l<n$.
Formally, $W = \{ \bar{a} \in {}^n M : M \models ( \bigwedge_{i < j < n,
l < n} \neg (\rho, l)(x_i, x_j))(\bar{a}) \}.$
Basically, we are throwing away assignments $\bar{a}$ whose edges betwen two of its elements are labelled by $\rho$, and keeping those
whose edges of its elements are not.
All this can be done with an arbirary graph.

Now for particular choices of $\G$; for example if $\G$ is a certain rainbow graph, or more simply a countable infinite collection of pairwise
union of disjoint $N$ cliques with $N\geq n(n-1)/2$, or  is the graph whose nodes are the natural numbers, and the edge relation is defined by
$iEj$ iff $0<|i-j|<N$, for same $N.$ Here, the choice of $N$ is not haphazard, but it a bound of edges of complete graphs having $n$ nodes.

The relativized set algebras based on $M$, but permitting as assignments satisfying formulas only $n$ sequences in $W$ will be an atomic
representable algebra.

This algebr, call it $\A$, has universe $\{\phi^M: \phi\in L^n\}$ where $\phi^M=\{s\in W: M\models \phi[s]\}.$ (This is not representable by its definition
because its unit is not a square.) Here $\phi^M$ denotes the permitted asignments satisfyng $\phi$ in $M$.
Its completion is the relativized set algebra $\C$ with universe the larger $\{\phi^M: \phi\in L^n_{\infty,\omega}\}$,
which turns out not representable. (All logics are taken in the above signature).
The isomorphism from  $\Cm\At\A$ to $\C$ is given by $X\mapsto \bigcup X$.

Let us formulate this construction in the context of split and blur.
Take the $n$ disjoint copies of $N\times \omega=\G$.
Let $a\in \G\times n$. Then $a\in N\times \omega\times n$. 
Then for every $(a,i)$ where $a\in N\times \omega$, and $i<n$, we have an atom $R_{a,i}^{\M}\in \A$. 
The term algebra of $\A$ is generated by those. 

Hence $\N\times \omega\times n$ is the atom structure of $\A$ which can be weakly represented using the $n$ blurs, namely the set
$\{\rho, i): i<n\}$. 
The clique  $N$ appeas on the complex algebra level, forcing a finite $N$ colouring, 
so that the complex algebra cannot be representable.

We note that if $N$ is infinite, then the complex algebra (which is the completion of the algebra constructed
as above ) will be representable and so $\A$, together the term algebra will be  completely 
representable.

\subsection{The relation algebra}

We use the graph $N\times \omega$ of countably many disjoint $N$ cliques.
We define a relation algebra atom structure $\alpha(\G)$ of the form
$(\{1'\}\cup (\G\times n), R_{1'}, \breve{R}, R_;)$.
The only identity atom is $1'$. All atoms are self converse, 
so $\breve{R}=\{(a, a): a \text { an atom }\}.$
The colour of an atom $(a,i)\in \G\times n$ is $i$. The identity $1'$ has no colour. A triple $(a,b,c)$ 
of atoms in $\alpha(\G)$ is consistent if
$R;(a,b,c)$ holds. Then the consistent triples are $(a,b,c)$ where

\begin{itemize}

\item one of $a,b,c$ is $1'$ and the other two are equal, or

\item none of $a,b,c$ is $1'$ and they do not all have the same colour, or

\item $a=(a', i), b=(b', i)$ and $c=(c', i)$ for some $i<n$ and 
$a',b',c'\in \G$, and there exists at least one graph edge
of $G$ in $\{a', b', c'\}$.

\end{itemize}
$\alpha(\G)$ can be checked to be a relation atom structure. It is exactly the same as that used by Hirsch and Hodkinson, except
that we use $n$ colours, instead of just $3$. This allows the relation algebra to have an $n$ dimensional cylindric basis
and, in fact, the atom structure of $\A$ is isomorphic (as a cylindric algebra
atom structure) to the atom structure ${\cal M}_n$ of all $n$-dimensional basic
matrices over the relation algebra atom structure $\alpha(\G)$.

Indeed, for each  $m  \in {\cal M}_n, \,\ \textrm{let} \,\ \alpha_m
= \bigwedge_{i,j<n}  \alpha_{ij}. $ Here $ \alpha_{ij}$ is $x_i =
x_j$ if $ m_{ij} = 1$' and $R(x_i, x_j)$ otherwise, where $R =
m_{ij} \in L$. Then the map $(m \mapsto
\alpha^W_m)_{m \in {\cal M}_n}$ is a well - defined isomorphism of
$n$-dimensional cylindric algebra atom structures.

It can be  shown that the complex algebras of this atom structure is not representable, because its chromatic number is finite; indeed
it is exactly $N$. (This will be demonstrated below.)

But we want more. Is it possible, thatthe constructed relation algebrais {\it not} in $S\Ra \CA_{n+2}$ which is strictly smaller that $\RRA$.
The idea that could work here, is to use {\it relativized} representations. 
Algebras in $S\Ra\CA_{n+2}$ do posses representations
that are only {\it locally} square.  So is the blurring, using $n$ colours, based on $N$, namely $(\rho, i)$ $i<n$,
enough to prohibit the complex algebra to be representable in a {\it weaker} sense, which means that we have to strengthen our conditions, involving
the superscrit $2$ in the equation with $N$ and $n$. We have $N\geq n(n-1)/2$ but we need a further combinatorial property relating the triple
$(2, N, n)$

In any event, there is a finite-infinite discrepancy here, as well, no matter what kind of representation we consider,
the base has to be infinite. A representation maps the complex algebra into the powerset of a set of ordered pairs, with base $X$,
thae latter has to be infinite.  At the same time the graph has an $N$ coloring, 
and this can be used to partition the complex algebra into $(N\times n)+1$ blocks. 

But this is not enough; the idea in the classical case,
works because  one member of the partition induced by the finite colouring will be monochromatic, 
and will satisfy $(P;P)\cdot P\neq 0$, which is a contradiction.

The last condition is not guaranteed  when we have only relativized representations, because if $h$ 
is such a representation, it is not really a faithful one, in the sense that it can happen that there are $x_0, x_1,x_2\in X$,
and $(x_0,x_1)\in h(a)$, $(x_1, x_2)\in h(b)$, $(x_0, x_2)\in h(c)$, and $a, b, c\in \Cm\G$,  but $h((a;b).c)=0$
if the node $x_1$ witnessing composition, lies outside the $n$ clique determined by $x_0, x_2$,
This cannot happen in case of  classical representation. 
Finite clique is the measure of {\it squareness}. It will be defined shortly.

But we are also {\it certain } that the complex algebra is not in $S\Ra\CA_{n+k}$ for some $k\in \omega$, by the neat embedding 
theorem for relation algebras, namey,  
$\RRA=\bigcap_{k\in \omega}S\Ra\CA_{n+k}.$ 

Now, accordingly, let us keep $k$ loose, for the time being. We want to determine the least such $k$.
Remember that we required that $N\geq n(n-1)/2$, this was necessary to show that permutations of $\omega\cap \{\rho\}$ 
induces $n$ back and forth systems of partial isomorphisms of size less than $n$ in  our limiting labelled graph $M$, showing that is 
{\it strongly}
$n$ homogeneous, when viewed as a model for the language $L$. This in turn enabled us to show
that the term algebra is representable. 

The plan is to go on with the proof and see what other combinatorial properties one should impose on the relationship between
$N$, $n$ and now $k$ to prohibit even {\it a relativized representation}. 
Obviously one should  keep the condition $N\geq n(n-1)/2$ not to tamper with the
first part of the proof.

Let $\A=\Cm\G$, and assume that $V\subseteq X\times X$ is a relativized representation. 
An arbitray relativized representation, 
that is if we take any set of ordered pairs, is useless, its not what we want.

We need {\it locally} square representations that are like representations only on finite cliques of the base.
But what does locally square mean? 
A clique $C$ of $X$ is a subset of the domain $X$, that can indeed be viewed as a complete graph, in the sense that any two points in it
we have $X\models 1(x,y)$, equivalently $(x,y)\in V$, where $V$ is the unit of the relativization.
The property of $n+k$ squareness means, then for all cliques $C$ of $X$ with $|C|<n+k$, can always be extended to another clique 
having at most one more element witnessing composition, so that composition can be preserved in the representation,
but only locally. It is easier to build such representations; from the game theoretic point of view because $\forall$ 
moves are restricted by the size of cliques, which means
that the chance that exists provide a node witnessing composition is higher.



Now lets getting starting with our plan.

Assume for contradiction that $\Cm\alpha(\G)\in S\Ra \CA_{n+k}$, and $k\geq 2$. 
Then $\Cm\alpha(\G)$ has an $n+k-2$-flat representation $X$ \cite{HHbook2}  13.46, 
which is $n+k-2$ square \cite{HHbook2} 13.10. 

In particular, there is a set $X$, $V\subseteq X\times X$ and $g: \Cm\alpha(\G)\to \wp(V)$ 
such that $h(a)$ ($a\in \Cm\alpha(\G)$) is a binary relation on $X$, and
$h$ respects the relation algebra operations. Here $V=\{(x,y)\in X\times X: (x,y)\in h(1)\}$, where $1$ is the greatest element of 
$\Cm\alpha(\G)$. We write $1(x,y)$ for $(x,y)\in h(1).$ 

For any $m<\omega$, let $C_m(X)=\{\bar{a}\in {}^mX: Range(a)\text { is an $m$ clique }\}$, then $n+k-2$ squareness
means that that if $\bar{a}\in C_{n+k-2}(X)$, $r,s\in \Cm\G$, $i,j,k<n, k\neq i,j$, and $X\models (r;s)(a_i, a_j)$ then there is $b\in C_{n+k-2}(X)$ 
with $\bar{b}$ agreeing with $\bar{a}$ except possibly at $k$
such that $X\models r(b_i, b_k)$ and $X\models s(b_k, b_j)$. 

This is the definition. But it is not hard to show that this is equivalent to
 the simpler condition that 
for all cliques $C$ of $X$ with $|C|<n+k$, all $x,y\in C$ 
and $a,b\in \Cm\alpha(\G)$, $X\models (a;b)(x,y)$ 
there exists $z\in X$ such that $C\cup \{z\}$ is a clique and $X\models a(x,z)\land b(z,y)$.

Now $\G$ has a finite colouring using $N$ colours. Indeed, the map $f:N\times \omega$ defined by $f(l,i)=l$ 
is a finite colouring using $N$ colours. For $Y\subseteq N\times \omega$ and $l<n$ define
$(Y, k)=\{(a,i,l): (a,i)\in Y\}$, regarded as a subset of $\Cm\G$. 

The nodes of $N\times \omega$ can be partitioned into
sets $\{C_j: j<n\}$ such that there are no edges within $C_j$.
Let $J=\{1', (C_j,k): j<N, k<n\}$ 
Then clearly, $\sum J=1$ in $\Cm\alpha(\G)$,  so that $J$ is partition of $\Cm\alpha(\G)$ into $N\times n+1$ blocks.

As $J$ is finite, we have for any $x,y\in X$ there is a $P\in J$ with
$(x,y)\in h(P)$. Since $\Cm\alpha(\G)$ is infinite then $X$ is infinite. 
Ramsys's theorem aplies in this context, to allow us to infer, that there are distinct
$x_i\in X$ $(i<\omega)$, $J\subseteq \omega\times \omega$ infinite
and $P\in J$ such that $(x_i, x_j)\in h(P)$ for $(i, j)\in J$, $i\neq j$. 
Then $P\neq 1'$. 

The condition we need on $k$, is that if $(x_0, x_1)\in h(a)$, $(x_1, x_2)\in h(b)$ and 
$(x_0, x_2)\in h(c),$ then $a;b.c\neq 0$.

So this prompts: 

{\it Find a combinatorial relation between $n,k, N$ with $N\geq n(n-1)/2$
that forces $(P;P)\cdot P\neq 0$. What is the least such $k$? This is formulated for any $P$, but maybe the condition
would also force Ramseys theorem to give the right 
block.} 


A non -zero element $a$ of $\Cm\alpha(\G)$ is monochromatic, if $a\leq 1'$,
or $a\leq (\Gamma,s)$ for some $s<n$. 
Now  $P$ is monochromatic, and the $C_j$ s are independent, it follows also from the definition of $\alpha$ that
$(P;P)\cdot P=0$. 

$\Cm\alpha(\G)$ is not in $S\Ra\CA_{n+m}$, and from this, it will follow that  $\Cm\M_n\notin S\Nr_n\CA_{n+m}$, for al $m\geq k$.
Showing that the latter two cases are not closed under completions.


For a relation algebra $\R$ having an $n$ dimensional cylindric basis, let ${\sf Mat}_n\R$ be the term cylindric algebra 
of dimension $n$ generated by the basic matrices.

\begin{theorem} Let $\G$ be a graph that is a disjoint union of cliques having size $n$. Then there is a strongly $n$ homogenious
labelled graph $M$, every edge is labelled by an element from $\G\cup \{\rho\}\times n$,
$W\subseteq {}^nM$, such that the set algebra based on $W$ is an atomic $\A\in \RCA_n$, and there is an atomic 
$\R\in \RRA$ the latter with an $n$ dimensional cylindric basis, 
such that $\A\cong {\sf Mat}_n\R$, and the completions of $\A$ and $\R$ are not representable, hence they are not completely
representable.
\end{theorem}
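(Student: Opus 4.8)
The plan is to realize the statement as a single blow-up-and-blur construction, producing $M$, $\A$ and $\R$ simultaneously, and then to derive the failure of representability of the completions from the finite chromatic number of $\G$. First I would build the base model. Take $\G=N\times\omega$, a disjoint union of countably many cliques, with the clique size $N$ chosen to satisfy the combinatorial bound $N\geq n(n-1)/2$ that is needed for homogeneity below. Let $\cal F$ be the class of all finite complete graphs labelled by $\G\cup\{\rho\}\times n$ obeying the coherence conditions (1)--(4) of the first example. One checks that $\cal F$ is closed under induced subgraphs and has the amalgamation property, so it admits a Fra\"iss\'e limit $M$, a countable homogeneous labelled graph. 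Strong $n$-homogeneity of $M$ follows by showing that every permutation of $\omega\cup\{\rho\}$ lifts to a back-and-forth system of partial isomorphisms of size $<n$; this is exactly where the bound $N\geq n(n-1)/2$ is used, since it guarantees enough independent edges to complete any partial map on fewer than $n$ points. Then set $W=\{\bar a\in{}^nM: M\models\bigwedge_{i<j<n,\,l<n}\neg(\rho,l)(x_i,x_j)(\bar a)\}$ and let $\A$ be the relativized set algebra with universe $\{\phi^M:\phi\in L^n\}$. Homogeneity of $M$ then shows that $\A\in\RCA_n$ is atomic, its atoms being exactly the sets $R_{a,i}^M$ for $a\in N\times\omega$, $i<n$.

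Next I would produce the relation algebra. Define the atom structure $\alpha(\G)=(\{1'\}\cup(\G\times n),R_{1'},\breve R,R_{;})$ with consistent triples as listed, verify that the Peircean (associativity) conditions hold so that $\alpha(\G)$ is a genuine relation atom structure, and let $\R=\Tm\alpha(\G)$ be its term algebra. Using the $n$ colours together with the blurs $(\rho,i)$, $i<n$, one represents $\R$, so $\R\in\RRA$ and is atomic. The bound $N\geq n(n-1)/2$ again guarantees that $\alpha(\G)$ carries an $n$-dimensional cylindric basis, namely the set $\M_n$ of $n$-dimensional basic matrices over $\alpha(\G)$. The isomorphism $\A\cong{\sf Mat}_n\R$ is then the map $m\mapsto\alpha_m^W$, where $\alpha_m=\bigwedge_{i,j<n}\alpha_{ij}$ and $\alpha_{ij}$ is $x_i=x_j$ or $R(x_i,x_j)$ according to $m_{ij}$; homogeneity of $M$ makes this a well-defined bijection respecting the cylindric operations, and it identifies $\At\A$ with $\M_n$.

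It remains to refute representability of the completions. Since $\A$ and $\R$ are atomic, their completions are $\Cm\At\A\cong\Cm\M_n$ and $\Cm\alpha(\G)$. The colouring $f(l,i)=l$ gives $\G$ chromatic number exactly $N$, so the set $J=\{1'\}\cup\{(C_j,k):j<N,\,k<n\}$, where the $C_j$ partition the nodes into edge-free classes, is a finite partition of the unit of $\Cm\alpha(\G)$ into $N\cdot n+1$ monochromatic blocks. Assuming $\Cm\alpha(\G)$ representable by $h$ on a base $X$, infiniteness of $X$ lets Ramsey's theorem produce distinct $x_i$ and a single block $P\in J$ with $(x_i,x_j)\in h(P)$ for all $i\neq j$ in an infinite index set; then $P\neq 1'$ is monochromatic, while the definition of $\alpha(\G)$ forces $(P;P)\cdot P=0$, contradicting that a representation must witness the triangle on $x_0,x_1,x_2$ by a nonzero element of $(P;P)\cdot P$. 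Hence $\Cm\alpha(\G)$ is not representable, and through the matrix correspondence neither is $\Cm\M_n$. Since a completely representable atomic algebra has a representable completion, both $\A$ and $\R$ fail to be completely representable.

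The main obstacle I expect is twofold. First, verifying strong $n$-homogeneity of $M$ and pinning down the exact numerical relationship $N\geq n(n-1)/2$ that simultaneously makes the term algebras representable and forces the finite chromatic number to survive into the complex algebra. Second, making the transfer of non-representability from $\Cm\alpha(\G)$ to $\Cm\M_n$ fully rigorous, i.e.\ checking that the isomorphism $\A\cong{\sf Mat}_n\R$ is compatible with passage to completions, so that the Ramsey and chromatic-number contradiction on the relation-algebra side genuinely obstructs a cylindric representation on the other. The Ramsey step itself is routine once the partition $J$ is in place.
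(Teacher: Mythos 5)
Your proposal follows essentially the same route as the paper: the Fra\"iss\'e-style construction of the labelled graph $M$ with labels from $\G\cup\{\rho\}\times n$ and the relativized set algebra on $W$, the relation algebra atom structure $\alpha(\G)$ with $n$ colours whose basic matrices give the cylindric basis and the isomorphism $m\mapsto\alpha_m^W$, and the chromatic-number/Ramsey argument producing a monochromatic block $P$ with $(P;P)\cdot P=0$ to refute representability of the completions. This matches the paper's own (somewhat informally presented) argument in all essentials, including the bound $N\geq n(n-1)/2$.
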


\subsection{ Second Example}


Here we turn to our second split and blur construction.
It is a simplified version of the proof of Andr\'eka and N\'emeti, except that for a set of blurs $J$, 
they defined infinitely many tenary relations on $\omega$ with suffixes from $J$, to synchronize the composition operation. 
This was necessary to show
that the required algebras are generated by a single element; here we  use one uniform relation, 
and we sacrifize with this part of the result, which is worthwhile, due to
the reduction of the complexity of the proof. We think that
our simplified version captures the essence
of  the blow  up and blur construction of Andr\'eka and N\'emeti.

Let $I$ and $J$ be sets, for the time being assume they are finite.
We will define two partitions $(H^P: P\in I)$ and $(E^W: W\in J)$ of a given infinite set $H$,
using atoms from a finite relation algebra for the first superscripts, and "blurs" (literally) for the second superscript.

The blurs do two things. They are just enough to distort the structure of $\bold M$ in the term algebra, but not in its completions,
but at the same time they are colours that are necessary 
for representing the term algebra.

Indeed, we use the first partition to show that the complex algebra of our atom structure is not representable,
while we use the second to show that the term
algebra is representable.


Let us start getting more concrete. Let $I$ be a finite set with $|I|\geq 6$. Let $J$ be the set of all $2$ element
subsets of $I$, and let
$$H=\{a_i^{P,W}: i\in \omega, P\in I, W\in J, P\in W\}.$$
In a minute we will get even more concrete by choosing a specific finite relation relation $\bold M$ with certain properties, namely,
it cannot be represented on infinite sets. The atoms of $\bold M$ will be  $I$. This algebra is finite, so it cannot do what we want. 
A completion of a finite algebra is itself.

The index $i$ here  says that we will replace each atom of this relation algebra by infinitely many atoms, that will define an atom structure
of a new infinite relation algebra, {\it the desired algebra}.
(This is an instance of a technique called {\it splitting}, which involves splitting an atom into smaller atoms.
Invented by Andreka, it is very useful in proving non representability  results).

The structure of $\bold M$ will be {\it blown up} by splitting the atoms, then 'blurred' in the term algebra, but it will {\it not} be blurred
in the  completion of the term algebra. More precisely, $\bold M$  will be a subalgebra of the completion,
but it may (and will not be) a subalgebra
of the term algebra.

The best way to visualize the partitions we will define is to imagine that the atoms of the new algebras, form a partition of
an infinite rectangle with finite base $I$ and side $\omega$ reflecting the infinite splitting of $I$.
Or to view it as an infinite tenary martrix,  with each entry indexed by $(i, P,W)\in \omega\times I\times J$, $P\in W$.

We now define two finite partitions of the rectangle, namely $H$.
For $P\in I$, let
$$H^P=\{a_i^{P,W}: i\in \omega, W\in J, P\in W\}.$$
The finite relation algebra will be embedable in the completion via $P\mapsto H^P$,
no distortion involved. $\bold M$ will still be up there on the global level.

The $J$s are the blurs, for $W\in J,$ let
$$E^W=\{a_i^{P,W}: i\in \omega, P\in W\}.$$
The singletons will generate this partition up to a `finite blurring'.
That is the term algebra will consist of all those $X$ such that $X$ intersects $E^W$ finitely or cofinitely.
For each $W\in J$, we have $W\subseteq I$, and so $E^W$ will be the subrectangle of $H$ on the base $W$.

To implement our plan we further need  a tenary relation, which synchronizes composition; it will
tell us  which rows in the rectangle, allow composition like $\bold M$.

For $i,j,k\in \omega$ $e(i,j,k)$ abbreviates that $i,j,k$ are {\it evenly distributed}, i.e.
$$e(i,j,k)\text { iff } (\exists p,q,r)\{p,q,r\}=\{i,j,k\}, r-q=q-p$$
For example $3,5,7$ are evenly distributed, but $3,5,8$ are not.
All atoms are self-converse. This always makes life easier.
We define the consistent triples as follows
(Involving identity are as usual $(a, b, Id): a\neq b).$

Let $i,j,k\in \omega$, $P,Q,R\in I$ and $S,Z,W\in J$ such that
$P\in S$, $Q\in Z$ and $R\in W$. Then the triple
$(a_i^{P,S},a_j^{Q,Z}, a_k^{R,W})$ is consistent iff
either
\begin{enumroman}
\item $S\cap Z\cap W=\emptyset,$
or
\item $e(i,j,k)\&P\leq Q;R.$
\end{enumroman}
The second says that if $i,j,k$ are $e$ related then the composition of $P$, $Q$ and $R$
existing on those three rows, is defined like $\bold M$.

Let $\cal F$ denote this atom structure, ${\cal F}=H\cup \{Id\}$

Now, as promised, we choose a (finite) relation algebra $\bold M$ with atoms $I\cup \{1d\}$ such that
for all $P,Q\in I$, $P\neq Q$ we have
$$P;P=\{Q\in I: Q\neq P\}\cup \{Id\}\text { and } P;Q=H$$
Such an $\bold M$ exists It is also known that
$\bold M$, if representable, can be only represented on finite sets. 
Now using the above partitions we show:
\begin{theorem}
\begin{enumarab}
\item $\Cm{\cal F}$ is a relation algebra that is not representable.
\item ${\cal R}$ the term algebra over $\cal F$ is representable.
\end{enumarab}
\end{theorem}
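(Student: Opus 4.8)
The two parts rest on completely different mechanisms, so I would treat them separately. Throughout, recall that $\cal F = H \cup \{Id\}$ is symmetric (every atom is self-converse), so composition in $\Cm\cal F$ is governed entirely by the symmetric ternary consistency relation, and I identify $\Cm\cal F$ with the full powerset algebra of $\cal F$ under the induced operations.

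For part (1) I would first verify that $\cal F$ really is a relation algebra atom structure, i.e. that $\Cm\cal F\in\RA$. The identity and converse laws are immediate; the only content is the associativity (Peircean) law, which demands that two consistent triangles sharing an edge can be completed to a consistent tetrahedron. This is exactly what the escape clause $S\cap Z\cap W=\emptyset$ and the evenly-distributed predicate $e$ are engineered for: associativity follows from $\bold M$ being a relation algebra on the evenly-distributed rows and is automatic otherwise. I expect this to be routine but bookkeeping-heavy, following the Andr\'eka--N\'emeti template. Next I would exhibit the map $Id\mapsto\{Id\}$, $P\mapsto H^P$, extended as a Boolean homomorphism, and argue it is an embedding $\bold M\hookrightarrow\Cm\cal F$. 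The one point needing care is preservation of composition, and in particular the equation $H^P\cap(H^P;H^P)=\emptyset$. This holds because no triple $(a_i^{P,S},a_j^{P,Z},a_k^{P,W})$ with all first coordinates equal to $P$ can be consistent: clause (i) fails since $P\in S\cap Z\cap W$, and clause (ii) fails since $P\not\leq P;P$ in $\bold M$. Thus the defining feature $P\not\leq P;P$ of $\bold M$ survives the blow-up and resurfaces in the completion.

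With the embedding in hand, non-representability is a soft finiteness argument. If $\Cm\cal F$ had a representation on a base $X$, then $X$ would be infinite, since $\cal F$ is infinite and distinct atoms receive disjoint nonempty binary relations, forcing $|X|^2\geq|\cal F|=\aleph_0$. Restricting that representation to the subalgebra isomorphic to $\bold M$ would represent $\bold M$ on an infinite base, contradicting the quoted fact that $\bold M$ is representable only on finite sets. Hence $\Cm\cal F$ is not representable.

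For part (2) I would use the description of $\cal R=\Tm\cal F$ as the algebra of those $X\subseteq\cal F$ meeting each blur class $E^W$ $(W\in J)$ in a finite or cofinite set, together with $Id$. I would then construct an ordinary --- and necessarily non-complete --- representation of $\cal R$ by a step-by-step back-and-forth over a countable base, i.e. by giving \pe\ a winning strategy in the usual atomic representation game for relation algebras \cite{HHbook2}. The key is that she can always answer a composition demand: the abundance of blurs guaranteed by $|I|\geq 6$ supplies $2$-subsets $S,Z,W$ of $I$ with $S\cap Z\cap W=\emptyset$, so clause (i) lets her split off any witness requirement, while the infinitely many rows $i\in\omega$ together with $e$ let her meet whatever is genuinely governed by $\bold M$. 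Since $\cal R$ only sees atoms up to finite/cofinite perturbation of the $E^W$, these choices never conflict with membership in $\cal R$, and the rigid $\bold M$-pattern is genuinely blurred. The main obstacle, and the heart of the construction, is precisely this combinatorial blur lemma: pinning down the exact numerical role of $|I|\geq 6$ and of the $2$-element blurs that simultaneously makes every network extendable for \pe\ yet leaves the $\bold M$-structure invisible to $\cal R$ while it remains visible, by part (1), to its completion $\Cm\cal F$.
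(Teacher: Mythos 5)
Your treatment of part (1) is essentially the paper's: the identity $H^P;H^Q=\bigcup\{H^Z: Z\leq P;Q \text{ in } \bold M\}$ makes $P\mapsto H^P$ (and $Id\mapsto\{Id\}$) an embedding of $\bold M$ into $\Cm{\cal F}$, and your observation that a triple with all first coordinates equal to $P$ violates both consistency clauses is exactly the point at which $P\not\leq P;P$ survives the blow-up. Combined with the remark that any base of a representation of the infinite algebra $\Cm{\cal F}$ must be infinite, while $\bold M$ is representable only on finite sets, this part is complete and matches the paper.

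Part (2) contains a genuine gap. You propose to give \pe\ a \ws\ in ``the usual atomic representation game''; but for a countable atomic relation algebra a \ws\ for \pe\ in the $\omega$-round atomic game yields a \emph{complete} representation, and ${\cal R}$ cannot be completely representable: a complete representation of ${\cal R}$ would induce a representation of $\Cm\At{\cal R}=\Cm{\cal F}$, contradicting part (1). So \pa\ in fact wins the atomic game on ${\cal R}$, and the strategy you describe cannot exist. You sense the tension (``necessarily non-complete'') but do not supply the mechanism that resolves it. The paper's device is to label edges not by atoms but by \emph{ultrafilters} of ${\cal R}$: the principal ones $U^a$ for $a\in F$ together with the non-principal ones $U^W=\{X\in R: |X\cap E^W|\geq \omega\}$, one for each blur $W\in J$. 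One builds, step by step, a complete ``consistent coloured graph'' $(G,l)$ with $l:G\times G\to \Uf$ subject to a consistency condition on triples of ultrafilters, and then $rep(X)=\{(u,v)\in G\times G: X\in l(u,v)\}$ is automatically a Boolean embedding precisely because every label is an ultrafilter; the edges carrying non-principal labels $U^W$ lie below no atom, which is exactly how the representation escapes completeness. The combinatorial heart is then isolated in three conditions: $(U^a,U^b,U^W)$ is consistent whenever $a;b\in U^W$; every triple from $\Uf\smallsetminus\{U^{Id}\}$ with at least two non-principal entries is consistent; and for all $a,b,c,d\in H$ there is $W$ with $a;b\cap c;d\in U^W$. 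These conditions, which the choice $|I|\geq 6$ and the two-element blurs are engineered to deliver, are what let \pe-style extension steps go through (any new node can be joined to old nodes by suitable $U^W$'s), and without the non-principal labels no back-and-forth argument over atoms can produce a representation of ${\cal R}$.
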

\begin{proof}
\begin{enumarab}
\item Non representabiliy uses the first partition of $H$. Note that $;$ is defined on $\Cm({\cal F})$ so  that
$$H^P; H^Q=\bigcup \{H^Z: Z\leq P; Q\in {\bold M}\}.$$
So $\bold M$ is isomorphic to a subalgebra of $\Cm F$. But $\Cm F$
can only be represented on infinite sets, while $\M$ only on finite ones, hence we are done.

\item The representability of the term algebra uses the second partition.
The blow up and blur algebra is ${\cal R}=\{X\subseteq F: X\cap E^W\in Cof(E^W), \forall W\in J\}$.
For any $a\in F$ and $W\in J$, let
$$U^a=\{X\in R: a\in X\}$$
and
$$U^W=\{X\in R: |Z\cap E^W|\geq \omega\}$$
Let
$$\Uf=\{U^a: a\in F\}\cup \{U^W: W\in J: |E^W|\geq \omega \}.$$
$\Uf$ denotes the set of ultrafilters of $\cal R$, that include at least one non-principal
ultrafilter, that is an element of the form $U^W$.

Let $F,G,K$ be boolean ultrafilters 
in a relation algebra and let $;$
denote composition. 
Then $$F;G=\{X;Y: X\in F, Y\in G\}.$$
The triple $(F,G,K)$ is {\it consistent} 
if the following holds:
$$F;G\subseteq K, F;K\subseteq G\text{  and }G;K\subseteq F.$$

So to represent $\cal R$ using $Uf$ as colours, we want to achieve (i) -(iii) below:

\begin{enumroman}

\item $(U^a,U^b,U^W)$ is consistent whenever $a,b\in H$ and $a;b\in U^W.$

\item $(F,G,K)$ is consistent whenever at least two of $F,G,K$ are non-principal
and $F,G,K\in Uf-\{U^{Id}\}.$

\item For any $a,b,c,d\in H$, there is $W\in J'$ such that $a;b\cap c;d\in U^W$.

\end{enumroman}

Let us see how to represent this algebra.
We call $(G,l)$ a  {\it consistent coloured graph} if $G$ is a set,
$l:G\times G\to Uf$
such that for all $x,y,z\in G$, the following hold:

\begin{enumroman}

\item $l(x,y)=U^{Id}$ iff $x=y,$

\item $l(x,y)=l(y,x)$

\item The triple $(l(x,y),l(x,z),l(y,z))$ is consistent.

\end{enumroman}

We say that a consistent coloured graph $(G,l)$ is complete if for all
$x,y\in G$, and $F,K\in Uf$, whenever
$(l(x,y),F,K)$ is consistent, then there is a node $z$ such that
$l(z,x)=F$ and $l(z,y)=K$.
We will build a complete consistent graph step-by-step. So assume (inductively)
that$(G,l)$ is a consistent coloured graph and $(l(x,y), F, K)$ is a consistent triple.
We shall extend $(G,l)$ with a new point $z$ such that $(l(x,y), l(z,x), l(z,y)) =(l(x,y),G,K).$
Let $z\notin G.$ We define $l(z,p)$ for $p\in G$ as follows:
$$l(z,x)=F$$
$$l(z,y)=K, \text { and if } p\in G\smallsetminus \{x,y\}, \text { then }$$
$$l(z,p)=U^W\text { for some }W\in J' \text { such that both }$$
$$(U^W, F, l(x,p))\text { and } (U^W, K, l(y,p))\text { are consistent }.$$
Such a $W$ exists by our assumptions (i)-(iii).
Conditions (i)-(ii) guarantee that this extension is again a consistent coloured graph.

We now show that any non-empty complete coloured graph $(G,l)$ gives a representation
for $\cal R.$ For any $X\in R$ define
$$rep(X)=\{(u,v)\in G\times G: X\in l(u,v)\}$$
We show that
$$rep:{\cal R}\to R(G)$$
is an embedding. $rep$ is a boolean homomorphism
because all the labels are ultrafilters.
$$rep(Id)=\{(u,u): u\in G\},$$
and for all $X\in R$,
$$rep(X)^{-1}=rep(X).$$
The latter follows from the first condition in the definition of a consistent coloured graph.
From the second condition in the definition of a consistent coloured graph, we have:
$$rep(X);rep(Y)\subseteq rep(X;Y).$$
Indeed, let $(u,v)\in rep(X), (v,w)\in rep(Y)$ I.e. $X\in l(u,v), Y\in l(v,w).$
Since $(l(u,v), l(v,w), l(u,w))$ is consistent, then $X;Y\in l(u,w)$, i.e. $(u,w)\in
rep(X;Y).$
On the other hand, since $(G,l)$ is complete and because (i)-(ii) hold, we have:
$$rep(X;Y)\subseteq rep(X);rep(Y),$$
because $(G,l)$ is complete and because (i) and (ii) hold.
Indeed, let $(u,v)\in rep(X;Y)$. Then $X;Y\in l(u,v)$.
We show that there are $F,K\in \Uf$ such that
$$X\in F, Y\in K \text { and } (l(u,v), F,K)\text { is consistent }.$$
We distinguish between two cases:

{\bf Case 1}. $l(u,v)=U^a$ for some $a\in F$. By $X;Y\in U^a$ we have $a\in X;Y.$
Then there are $b\in X$, $c\in Y$ with $a\leq b;c.$ Then $(U^a, U^b, U^c)$ is consistent.

{\bf Case 2.}
$l(u,v)=U^W$ for some $W\in J'$. Then $|X;Y\cap E^W|\geq \omega$
by $X;Y\in U^W$.
Now if both $X$ and $Y$ are finite, then there are $a\in X$, $b\in Y$ with
$|a;b\cap E^W|\geq \omega$.
Then $(U^W, U^a, U^b)$ is consistent by (i). Assume that one of $X,Y$, say $X$ is infinite.
Let $S\in J'$ such that $|X\cap E^S|\geq \omega$ and let $a\in Y$ be arbitrary.
Then $(U^W, U^S, U^a)$ is consistent by (ii)
and $X\in U^S, Y\in U^a.$

Finally, $rep$ is one to one because $rep(a)\neq \emptyset$ for all $a\in A$.
Indeed $(u,v)\in rep(Id)$ for any
$u\in G$. Let $a\in H$. Then $(U^{Id}, U^a, U^a)$ is consistent, so there is a $v\in G$ with
$l(u,v)=U^a$. Then $(u,v)\in rep(a).$

\end{enumarab}

\end{proof}

\subsection{The cylindric algebra}

We define the atom structure like we did before. The basic matrices of the atom structure above form a
$3$ dimensional cylindric algebra. We want an
$n$ dimensional one.  Our previous construction of the atom structure satisfied (*)
satisfies $(\forall a_1\ldots a_3\ b_1\ldots b_3\in I)(\exists W\in J)(a_1;b_1)\cap\ldots (a_3;b_3)\in U^W.$

We strengthen this condition to (**)
$$(\forall a_1\ldots a_nb_1\ldots b_n\in I)(\exists W\in J)W\cap (a_1;b_1)\cap\ldots  (a_n;b_n)\neq \emptyset.$$
(This is referred to in \cite{Sayed} as an {\it $n$ complex blur for $\bold M$},
our first construction was a $3$ complex blur).

This condition will entail that the set of {\it all} $n$ by $n$ matrices is a cylindric basis on the new relation
algebra ${\cal R}_n$ defined as before, with minor modifications.

Now ${\cal R}_n$ is defined by taking $I$ be a finite set with $|I|\geq 2n+2$, $J$ be the set of all $2$ (See the proof) element
subsets of $I$. And then define everything as before.
The resulting cylindric algebra is also called the blow up and blur {\it cylindric algebra of dimension $n$},
which actually blows up  and blurs the $n$ dimensional finite cylindric algebra consisting of $n$ basic matrices of $\bold M$, whch is representable,
so such  an algebra exists for every $n.$

The new condition (**)
guarantess the amalgamation property of matrices (corresponding to commutativity of
cylindrifiers)  which is the essential  property of basis.

We know that the term algebra is a subneat reduct of an algebra in $\omega$ extra dimensions.
But we need a final tick so that the the term cylindric algebra is a {\it full} neat reduct.
This requires a yet another strenghthenig of  (**) by replacing $\exists$ by $\forall$.

Now under this stronger condition, let $\B_n$ be the set of basic matrices of our blown up and blurred ${\cal R}_n$.
In the first order language $L$ of $(\omega, <)$, which has quantifier elimination,
diagrams are defined for each $K\subseteq n$ and $\phi\in L$, via maps  $\bold e:K\times K\to {\cal R}_n$.
For an atom let $v(a)$ be its ith co-ordinate, or its $i$ th level in the rectangle.

The pair $\bold e$ and $\phi$ defines an element in $\Cm\B_n$, called a diagram,
that is a set of matrices, defined by
$$M(\bold e, \phi)=\{m\in B_n, i,j\in K, m_{ij}\leq \phi(e_{ij},v(m_{ij})\}.$$

A normal diagram is one whose entries are either atoms or finitely many blurs (by (J)), that is elements of the form $E^W$,
in addition to the condition that $\phi$ implies $\phi_e$.
Any diagram can be approximated by normal ones; and atcually it is a finite union of normal diagrams.
The term algebra turns, denoted by $\Bb_n(\bold M, J, e)$, consists of those diagrams,
and finally we get  that  that for $t<n$
$$\Nr_t \Bb_n(\bold M, J, e)\cong \Bb_t(\bold M, J, e).$$
Here actually we are also blowing and bluring the finite dimensional cylindric algebra atom consisting of matrices on $\bold M$, we blow 
up every $n$ dimensional matrix to infinitely many, where each entry is either an atom of the relation algebra or a blur;
these are exactly 
the diagrams.

\subsection{The analogy, first informaly, then formally in a map}

This construction actually has a lot of affinity with the first  model theoretic construction.
First they both prove the same thing; the Andreka et all construction  proves that in addition the term algebra is a $k$ neat reduct.
Now here we are comparing a relation algebra construction with a cylindric algebra one, but the analogy is worthwhile pointing out.

Replace the clique $N$ in Hodkinson's construction by $\bold M$, in this case the term algebra, 
$\cal R$ is also obtained by replacing every atom by infinitely many ones, 
and $\bold M$ appears on the global level as a subalgebra of the complex algebra.

To this consruction we can also associate a finite graph with finite chromatic number, namely the complete graph on $\bold M$.
The blurs are the colours, that correspond to the colours $(\rho, i)$ in Sayed Ahmed 's construction.  

In the first case the splitting of the clique $N$, uses just one index, in the second we use two indices, the atoms of $\bold M$ and the blurs.
The first partition replaces the use of Ramseys theorem, the second partition, is a devision of the whole splitting into finitely many 
rectangles, one for each blur.  The homogeneous model $M$ in the second construction correspond to the second partition,  in the sense 
that it is {\it not}  the base of the representable term algebra,
but $W$ is, which is basically obtained by removing the blurs, that are the same time 
essential in representing it, $W$ thus corresponds to the term algebra of co-finite finite intersections with the second partition, 
which in turn is representable.

In short, we start up with a finite structure, blow it up, on the term algebra level, using blurs to represent it, 
but it will not be blurred enough to disappear on the complex algebra level, forcing
the latter to be non-representable (due to incompatibility of "a finitenes condition") with the inevitability 
of representing the complex algebra on an infinite set.

More formally, we define a function that maps the ingredients of the first construction to that of the second:

$$N\mapsto \bold M$$
$$N\times \omega\times n\mapsto \omega\times P\times J$$
In the former case $J'=\emptyset$, the blurs do not appear on this level, in the second splitting all blurs are used.
$$\{(\rho , i): i<n \}\mapsto \{ W: W\in J\}.$$
Here in the first case $n$ blurs are needed to represent the new term algebra. In the latter it is the number of two elements subsets of 
$I$.
For $\phi\in L^+$, let $\phi^M=\{s\in {}^nM: M\models \phi[s]\}.$
Here we are {\it not relativizing semantics}, in particular, tuples whose edges can be labelled $\rho$ are there,
but then the representable algebra is $\{\phi^M\cap W:\phi\in L^+\}.$
In the second case we have a finite partition of the rectangle $H$, via $(E^W: W\in J)$.
$$\{{\phi}^M\cap W: \phi \in L^+\}\mapsto \{X\subseteq F: X\cap E^W\in \Cof (E^W): \text { for all } W\in J\}.$$

$$\A\mapsto {\cal R}$$

$$\Cm\At\A\to \Cm\At{\cal R}$$

Here we include more examples.

\begin{example}

Let $l\in \omega$, $l\geq 2$, and let $\mu$ be a non-zero cardinal. Let $I$ be a finite set,
$|I|\geq 3l.$ Let
$$J=\{(X,n): X\subseteq I, |X|=l,n<\mu\}.$$
Let $H$ be as before, i.e.
$$H=\{a_i^{P,W}: i\in \omega, P\in I, W\in J\}.$$
Define
$(a_i^{P,S,p}, a_j^{Q,Z,q}, a_k^{R,W,r})$ is consistent ff

$S\cap Z\cap W=\emptyset$ or
$ e(i,j,k) \text { and } |\{P,Q,R\}|\neq 1.$

Pending on $l$ and $\mu$, let us call these atom structures ${\cal F}(l,\mu).$
Then our first  example in is just
${\cal F}(2,1).$

If $\mu\geq \omega$, then $J$ as defined above would be infinite,
and $\Uf$ will be a proper subset of the ultrafilters.
It is not difficult to show that if $l\geq \omega$
(and we relax the condition that $I$ be finite), then
$\Cm{\cal F}(l,\mu)$ is completely representable,
and if $l<\omega$ then $\Cm{\cal F}(l,\mu)$ is not representable. In the former case we have infnitely many colours, so that the chromatic number
of the graph is infinite, while in the second case the chromatic number is finite.

Informally, if the blurs get arbitarily large, then in the limit, the resulting algebra will be completely representable, and so its complex algebra
will be representable. If we take, a sequence of blurs, each finite, but increasing in size we get a sequence of algebras
that are not completely representable, and the sequence of their complex algebras will not be representable. The limit of the former,
will be  completely representable (with  an infinite set of blurs); its completion will be the limit of the second sequence of non representable algebras, 
will be representable. Either construction can be used to achieve this. 

This phenomena has many reincarnations in the literature. One is the following:
It is is nothing more than Monk's classical non finite
axiomatizability result;  it gives a sequence of non representable algebras whose ultraproduct is completely representable.
\end{example}
Using such examples, we now prove:
\begin{corollary}
\begin{enumarab}
\item  The classes $\RRA$ is not finitely axiomatizable.
\item  The elementary
closure of the class ${\CRA}$ is not finitely axiomatizable.
\end{enumarab}
\end{corollary}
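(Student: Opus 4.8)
The plan is to run a Monk-style ultraproduct argument on the parametrised atom structures ${\cal F}(l,\mu)$ of the preceding Example. Recall the standard criterion: an elementary class is finitely axiomatizable precisely when its complement is also closed under ultraproducts. Hence, to show that an elementary class $K$ is not finitely axiomatizable it suffices to produce a sequence $(\B_k)_{k<\omega}$ with $\B_k\notin K$ for all $k$ but $\prod_U\B_k\in K$ for some non-principal ultrafilter $U$ on $\omega$. Both $\RRA$ and the elementary closure $\mathbf{El}(\CRA)$ are elementary (the former is a variety, the latter is $\mathrm{Mod}(\mathrm{Th}(\CRA))$ by definition), hence closed under ultraproducts, so this scheme applies to each. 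The point of the Example is exactly that it supplies one sequence that works simultaneously for both.

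First I would fix an increasing sequence $2\leq l_0<l_1<\cdots$ of finite integers, keep $\mu=1$, and set $\B_k=\Cm{\cal F}(l_k,1)$. By the Example each $\B_k$ is a non-representable algebra, since the blurs are finite and the associated graph has finite chromatic number; in particular $\B_k\notin\RRA$. Because $\CRA\subseteq\RRA$ and $\RRA$ is a variety, we have $\mathbf{El}(\CRA)\subseteq\mathbf{El}(\RRA)=\RRA$, so $\B_k\notin\RRA$ already forces $\B_k\notin\mathbf{El}(\CRA)$. Thus the single sequence $(\B_k)$ lies outside both target classes.

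Next I would analyse $\prod_U\B_k$. The atom structure is first-order interpretable in the complex algebra, so by \L os's theorem $\At\!\left(\prod_U\B_k\right)=\prod_U{\cal F}(l_k,1)$; and since $l_k\to\infty$, the blurs of this ultraproduct are infinite and its chromatic number is infinite, so it is an atom structure of the type ${\cal F}(l,\mu)$ with $l$ infinite. By the Example the complex algebra $\Cm\!\left(\prod_U{\cal F}(l_k,1)\right)$ is then completely representable, hence representable. For $\RRA$ this already finishes the job: the canonical embedding $\prod_U\B_k\hookrightarrow\Cm\!\left(\prod_U{\cal F}(l_k,1)\right)$ exhibits $\prod_U\B_k$ as a subalgebra of a representable algebra, and $\RRA$ is closed under subalgebras, so $\prod_U\B_k\in\RRA$ while $\B_k\notin\RRA$; by the criterion of the first paragraph $\RRA$ is not finitely axiomatizable. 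For $\mathbf{El}(\CRA)$ I would instead show $\prod_U\B_k\in\mathbf{El}(\CRA)$ by establishing that $\prod_U\B_k$ is completely representable, or at least elementarily equivalent to the completely representable algebra $\Cm\!\left(\prod_U{\cal F}(l_k,1)\right)$; membership of one completely representable algebra places the ultraproduct in $\CRA\subseteq\mathbf{El}(\CRA)$, and the criterion again applies. The cylindric versions are obtained verbatim from the cylindric blow-up-and-blur construction of the previous subsection, replacing ternary composition consistency by the $n$-dimensional matrix conditions.

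The main obstacle is the step $\prod_U\B_k\in\mathbf{El}(\CRA)$, since $\mathbf{El}(\CRA)$, unlike $\RRA$, is not closed under subalgebras, so the subalgebra argument that settled $\RRA$ is unavailable here. The delicate point is that ultraproducts need not preserve atomicity, so I cannot simply declare $\prod_U\B_k$ an atomic algebra on the atom structure $\prod_U{\cal F}(l_k,1)$ and invoke the principle (quoted at the start of the Neat atom structures subsection) that complete representability is a property of the atom structure alone. I would circumvent this by working with the dense inclusions $\Tm\!\left(\prod_U{\cal F}(l_k,1)\right)\subseteq\prod_U\B_k\subseteq\Cm\!\left(\prod_U{\cal F}(l_k,1)\right)$ and either transferring a complete representation of the top algebra down along the embedding, or directly proving $\prod_U\B_k\equiv\Cm\!\left(\prod_U{\cal F}(l_k,1)\right)$, which already suffices for membership in the elementary closure. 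The genuine combinatorial crux, and the place where $l_k\to\infty$ is essential, is verifying that $\prod_U{\cal F}(l_k,1)$ really has infinite blurs, equivalently infinite chromatic number, rather than merely nonstandard finite ones that would still block representability.
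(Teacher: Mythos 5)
Your proposal is correct and follows essentially the same route as the paper: a Monk-style ultraproduct argument on the atom structures ${\cal F}(l,1)$ with $l\to\infty$, whose complex algebras are non-representable (hence outside both $\RRA$ and the elementary closure of $\CRA$) while a non-principal ultraproduct is completely representable. The two cautions you flag are in fact lighter than you fear: atomicity and the identity $\At(\prod_U\B_k)=\prod_U{\cal F}(l_k,1)$ are both first-order, hence preserved by ultraproducts, and the paper itself is looser than you are here, asserting $\prod_U\Cm\A_i=\Cm(\prod_U\A_i)$ where only a (harmless) dense atomic embedding holds.
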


\begin{demo}{Proof}
For the second  we use the second construction.  Let ${\cal D}$ be a non-
trivial ultraproduct of the atom structures ${\cal F}(i,1)$, $i\in \omega$. Then $\Cm{\cal D}$
is completely representable. 
Thus $\Tm{\cal F}(i,1)$ are $\RRA$'s 
without a complete representation while their ultraproduct has a complete representation.

Also $\Cm{\cal F}(i,1)$, $i\in \omega$ 
are non representable with a completely representable ultraproduct.
This yields the desired result.

We prove the cylindric case. Take $\G_i$ to be the disjoint union of cliques of size $n(n-1)/2+i$. 
Let $\alpha_i$ be the corresponding atom astructure
of $\A_i$, as constructed above. Then $\Cm \A_i$ is not representable, but $\prod_{i\in \omega}\Cm\A_i=\Cm(\prod_{i\in \omega}\A_i)$. 
Then the latter is based on the disjoint union of the cliques which is arbitrarily large, hence is representable. 
\end{demo}
The first construction also works, by using relation algebra atom structures with $n$ dimensional cylindric bases, this will
yield the analogous result for cylindric algebras.

The second re-incarnation is due to Hirsch and Hodkinson, it also works for relation and cylindric algebras, and this is the essence. For each graph
$\Gamma$, they associate a cylindric algebra atom structure of dimension $n$, $\M(\Gamma)$ such that $\Cm\M(\Gamma)$ is representable
if and only if the chomatic number of $\Gamma$, in symbols $\chi(\Gamma)$, which is the least number of colours needed, $\chi(\Gamma)$ is infinite. 
Using a famous theorem of Erdos, they construct  a sequence $\Gamma_r$ with infinite chromatic number and finite girth, 
whose limit is just $2$ colourable, they show that the class of strongly representable
algebras  is not elementary. Notice that this is a {\it reverse process} of Monk-like  constructions, given above, 
which gives a sequence of graphs of finite chromatic number whose limit (ultarproduct) has infinite
chromatic number.

And indeed, the construction also, 
is a reverse to Monk's construction in the following sense:
Some statement fail in $\A$ iff $At\A$ 
be partitioned into finitely many $\A$-definable sets with certain 
`bad' properties. Call this a {\it bad partition}. 
A bad partition of a graph is a finite colouring. So Monks result finds a sequence of badly partitioned atom structures,
converging to one that is not. As we did above, this boils down, to finding graphs of finite chromatic numbers $\Gamma_i$, having an ultraproduct
$\Gamma$ with infinite chromatic number. 

An atom structure is {\it strongly representable} iff it 
has {\it no bad partition using any sets at all}. So, here, the idea  find atom structures, with no bad partitions, 
with an ultraproduct that does have a bad partition.
From a graph Hirsch nad Hodkinson constructed  an atom structure that is strongly representable iff the graph
has no finite colouring.  So the problem that remains is to find a sequence of graphs with no finite colouring, 
with an ultraproduct that does have a finite colouring, that is, graphs of infinite chromatic numbers, having an ultraproduct
with finite chromatic number.

It is not obvious, a priori, that such graphs actually exist.
And here is where Erdos' methods offer solace. Indeed, graphs like this can be found using the probabilistic methods of Erdos, for those methods
render finite graphs of arbitrarily large chormatic number and girth. 
By taking disjoint unions, one {\it can get}  graphs of infinite chromatic number (no bad partitions) and arbitarly large girth. A non principal 
ultraproduct of these has no cycles, so has chromatic number 2 (bad partition).

Monk proved non finite axiomatizability of the representable algebras using a lifting argument. Here we do the same thing with anti-Monk algebras,
in the hope of getting a weakly representable $\omega$ dimensional atom structure that is not strongly representable.

Let $\Gamma_r$ be a sequence of Erdos graphs. Let $\A_n=\A(n,\Delta_n)$ be the representable atomic algebra of dimension $n$, based  on 
$\Delta_n$, the disjoint union of $\Gamma_r$, $r>n$. Then we know that $\At\A_n$ is a strongly representable atom structure of dimension $n$.
Let $\A_n^+$ be an $\omega$ dimensonal algebra such that 
$\Rd_n\A_{n}^+=\A_n$; we can assume that for $n<m$, there is an $x\in \A_m$, such that  $\A_{n}\cong \Rd\Rl_x\A_{m}$.
Now let $\A=\prod_{n\in F}\A_n^+$, be any non trivial ultarproduct of the $\A_n^+$s, 
then $\A$ is an atomic  $\RCA_{\omega}$ that has an atom structure that is based on the graph $\Delta=\prod \Delta_n$, 
with chromatic number $2$, hence it is only weakly representable.

\end{document}